\newtheorem{theorem}{Theorem}
\newtheorem{lemma}[theorem]{Lemma}
\newtheorem{proposition}[theorem]{Proposition}
\newtheorem{corollary}[theorem]{Corollary}
\theoremstyle{definition}
\newtheorem{definition}[theorem]{Definition}
\newtheorem{notation}[theorem]{Notation}
\newtheorem{example}[theorem]{Example}
\theoremstyle{remark}
\newtheorem{remark}[theorem]{Remark}
\newcommand{\ab}{\allowbreak}
\newcommand{\ol}{\overline}
\newcommand{\ds}{\displaystyle}
\newcommand{\cA}{\mathcal{A}}
\newcommand{\bC}{\mathbb{C}}
\newcommand{\E}{\textrm{E}}
\newcommand{\gue}{{\sc gue}}
\newcommand{\N}{\mathbb{N}}
\newcommand{\cP}{\mathcal{P}}
\newcommand{\pin}{\pi_{\vec n}}
\newcommand{\cPS}{\mathcal{PS}_{NC}}
\newcommand{\tin}{\tau_{\vec n}}
\newcommand{\Tr}{\mathop{{\rm Tr}}}
\newcommand{\cV}{\mathcal{V}}
\newcommand{\vin}{\mathcal{V}_{\vec n}}
\newcommand{\cW}{\mathcal{W}}
\newcommand{\cU}{\mathcal{U}}
\title{Second Order Cumulants of Products}
\author[J. A. Mingo]{James A. Mingo$^{(*)}$}
\address{Queen's University, Department of Mathematics and
  Statistics, Jeffery Hall, Kingston, ON K7L 3N6, Canada}
\thanks{$^{*}$ Research supported by Discovery Grants and a Leadership
Support Initiative Award from the Natural Sciences and Engineering
Research Council of Canada}
\email{mingo@mast.queensu.ca}
\author[R. Speicher]{Roland Speicher$^{(*)(\dagger)}$}
\address{Queen's University, Department of Mathematics and
  Statistics, Jeffery Hall, Kingston, ON K7L 3N6, Canada}
\email{speicher@mast.queensu.ca}
\thanks{$^{\dagger}$ Research supported by a Killam Fellowship from
the Canada Council for the Arts.}
\author[E. Tan]{Edward Tan$^{(\ddagger)}$}
\address{Queen's University, Department of Mathematics and
  Statistics, Jeffery Hall, Kingston, ON K7L 3N6, Canada}
\email{3et8@qlink.queensu.ca}
\thanks{$^{\ddagger}$Research supported by a USRA from the Natural
Sciences and Engineering Research Council of Canada}
\date{}
\begin{document}

\begin{abstract}
We derive a formula which expresses a second order cumulant
whose entries are products as a sum of cumulants where the
entries are single factors. This extends to the second order
case the formula of Krawczyk and Speicher. We apply our result to the
problem of calculating the second order cumulants of a semi-circular and
Haar unitary operator. 
\end{abstract}

\maketitle

\section{Introduction}

In\footnotetext[1]{AMS classification: 46L54 (primary), 15A52,
60F05} a recent series of papers we have developed the notion of
second order freeness \cite{cmss}, \cite{kms}, \cite{mn},
\cite{mss}, \cite{ms}, \cite{emrs}. This was motivated by the
need for a framework for recent work on the global fluctuations
of the eigenvalues of ensembles of random matrices; see e.g.
Ambj{\o}rn, Jurkiewicz, and Makeenko \cite{ajm},Anderson and
Zeitouni \cite{az}, Bai and Silverstein \cite{bs}, Br\'ezin and Zee
\cite{bz}, Diaconis \cite{d}, Johansson \cite{j}, Khorunzhy,
Khoruzhenko, and Pastur \cite{kkp}.

Free independence, or what we shall call first order freeness,
was created by Voiculescu \cite{v, vdn} as an adaptation of the usual
notion of independence to the non-commutative algebra of matrix
valued (or more generally operator valued) random variables.
The central object of Voiculescu's theory is called the
$R$-transform.  For a random variable $a$, $R(z)$ is a formal
power series, which in most examples is an analytic function.
The coefficients of $R(z)$ are called the free cumulants of $a$.

The salient feature of Voiculescu's theory is that given the
moments of freely independent random variables, there is a
universal rule for calculating the moments of sums and products
of these random variables. Second order freeness achieves for the
\textit{fluctuation moments} what first order freeness does for
ordinary moments. Moreover for many of the standard ensembles of
random matrices, independent matrices are asymptotically free of
second order, so the theory is quite widely applicable.

Speicher \cite{s} developed a combinatorial approach to free
cumulants based on the non-crossing partitions of Kreweras
\cite{k}. This enabled the theory to be used in many cases
where analytic expressions could not be found. In particular in
\cite{ks}, Krawczyk and Speicher found the free analogue of the
formula of Leonov and Shiryaev \cite{ls} for calculating
cumulants where the entries are products. In this paper we
shall extend this result to the second order cumulants
introduced in \cite{cmss}. 

We begin by motivating the definition of second order cumulants.
Let $(A_N)_{N \in \N}$ be a unitarily invariant ensemble of
random matrices. By definition this means that the joint
distribution of the entries of $A_N$ is the same as the joint
distribution of the entries of $UA_NU^\ast$, where $U$ is a $N
\times N$ unitary matrix. We shall say that the ensemble
$(A_N)_{N\in \N}$ has a \textit{ second order limit
distribution} (c.f. \cite[3.4]{mss}) if for all integers $p$
and $q$ the limit

\[
\alpha_p := \lim_N \frac{1}{N} E(\Tr(A_N^p))
\]
exists, and for $Y_{N,p} = \Tr(A_N^p - \alpha_p I_N)$, the limit
\[
\alpha_{p,q} := \lim_N E(Y_{N,p} Y_{N,q})
\]
exists, and for all $r > 2$ and $p_1$, $p_2$, \dots, $p_r$
\[
\lim_N k_r( \Tr(A_N^{p_1}), \dots, \Tr(A_N^{p_r})) = 0
\]
where $k_r$ is the $r^{th}$ classical cumulant, or
semi-invariant (see e.g. \cite{ls}). Elements of the sequence
$(\alpha_p)_p$ are the moments of the limiting distribution. We
shall call the elements of the sequence $(\alpha_{p,q})_{p,q}$ the
\textit{fluctuation moments} of the limiting distribution.

These two sequences of moments may be then used to define a
{\em second order} non-commutative probability space on $\cA =
\mathbb{C}[x]$, the polynomials in the variable $x$. By a
second order non-commutative probability space we mean a triple
$(\cA, \phi, \phi_2)$ where $\cA$ is a unital algebra over
$\mathbb{C}$, $\phi : \cA \rightarrow \mathbb{C}$ is a tracial
linear functional with $\phi(1) = 1$, and $\phi_2: \cA \times
\cA \rightarrow \mathbb{C}$ is a symmetric bilinear function
which is tracial in each variable and $\phi_2 (1, a) =
\phi_2(a, 1) = 0$ for all $a \in \cA$. Thus in our example we
let $\phi(x^p) = \alpha_p$ and $\phi_2(x^p, x^q) = \alpha_{p,q}$.
 
In \cite{cmss} we introduced the second order $R$-transform,
which is a formal power series in two variables: $R(z, w) =
\sum_{p, q \geq 1} \kappa_{p,q} z^{p-1} w^{q-1}$. The coefficients
$(\kappa_{p,q})_{p,q}$ are called the second order cumulants of the
second order distribution and they depend on the first and second
order moments $(\alpha_p)_p$ and $(\alpha_{p,q})_{p,q}$ according
to the functional equation given in \cite[p. 11]{cmss}.

\begin{equation}\label{functional}
G(z, w) = G'(z) G'(w) R(G(z), G(w)) + 
\frac{\partial^2}{\partial z \partial w}
\log \Big( \frac{ G(z) - G(w) }{z - w} \Big)
\end{equation}
where
\[
G(z) = \frac{1}{z} \sum_{p \geq 0} \alpha_p z^{-p}
\mbox{ and }
G(z, w) = \frac{1}{z w} \sum_{p, q \geq 1}
\alpha_{p,q} z^{-p} w^{-q}
\]
 
Equation (\ref{functional}) determines a sequence of equations
relating the moments and the cumulants known as the moment-cumulant
relation. Below is a table giving the first few equations.

$$
\alpha_{1,1} = \kappa_{1,1} + \kappa_2
$$
$$
\alpha_{2,1} = \kappa_{1,2} + 2 \kappa_1 \kappa_{1,1} + 
2 \kappa_3 + 2 \kappa_1 \kappa_2 
$$
$$
\alpha_{2,2} = \kappa_{2,2} + 4 \kappa_1 \kappa_{2,1} 
+ 4 \kappa_1^2 \kappa_{1,1} + 4 \kappa_4 + 8 \kappa_1
\kappa_3  + 2 \kappa_2^2 + 4 \kappa_1^2 \kappa_2
$$
$$
\alpha_{1,3} 
= \kappa_{1,3} + 3 \kappa_1 \kappa_{2,1} + 3
\kappa_2 \kappa_{1,1} + 3 \kappa_1^2 \kappa_{1,1} +
3 \kappa_4 + 6 \kappa_1 \kappa_3 +
3 \kappa_2^2 + 3 \kappa_1^2 \kappa_2 
$$
$$
\alpha_{2,3} = \kappa_{2,3} +  2 \kappa_1 \kappa_{1,3} 
+ 3 \kappa_1 \kappa_{2,2} + 3 \kappa_2 \kappa_{1,2} 
+ 9 \kappa_1^2 \kappa_{1,2} + 6 \kappa_1 \kappa_2 \kappa_{1,1}
+ 6 \kappa_1^3 \kappa_{1,1}
$$
$$\mbox{}
+ 6 \kappa_5 + 18 \kappa_1 \kappa_4 
+ 12 \kappa_2 \kappa_3 + 18 \kappa_1^2 \kappa_3
+12 \kappa_1 \kappa_2^2  + 6 \kappa_1^3 \kappa_2
$$
$$
\alpha_{3,3} =
\kappa_{3,3} 
+ 6 \kappa_1 \kappa_{2,3} 
+ 6 \kappa _2 \kappa _{1,3}
+ 6 \kappa_1^2 \kappa_{1,3} 
+ 9 \kappa _1^2 \kappa_{2,2} 
+ 18 \kappa _1 \kappa _2 \kappa_{1,2}
+ 18  \kappa_1^3 \kappa_{1,2} 
$$
$$\mbox{}
+ 9 \kappa _2^2 \kappa_{1,1}
+ 18 \kappa_1^2 \kappa_2 \kappa_{1,1}
+ 9 \kappa_1^4 \kappa_{1,1}
+ 9 \kappa_6 
+ 36 \kappa_1 \kappa_5
+ 27 \kappa_2 \kappa_4
+ 54 \kappa_1^2 \kappa_4
$$
$$\mbox{}
+ 9 \kappa_3^2
+ 72 \kappa_1 \kappa _2 \kappa_3
+ 36 \kappa_1^3 \kappa_3
+ 12 \kappa_2^3
+ 36\kappa_1^2 \kappa_2^2 
+ 9 \kappa_1^4 \kappa_2    
$$

\medskip

We shall find it more convenient to use the combinatorial moment
cumulant relation for second order cumulants given in
\cite[Definition 7.4]{cmss}.

Let us recall the combinatorial definition of free
cumulants from \cite[Lecture 11]{ns}. Suppose we have a sequence of
multilinear functionals $(f_n)_n$ with $f_n : \cA \times \cdots
\times\cA \rightarrow \mathbb{C}$ being $n$-linear. We extend
this sequence to a family indexed by $\cP(n)$, the partitions
of $[n] = \{1, 2, 3, \dots, n\}$ as follows. If $V= \{ i_1,
\dots, i_k\} \subset [n]$ we let $f_V(a_1, \dots , a_n) =
f_k(a_{i_1}, \dots, a_{i_k})$, and if $\pi = \{V_1, \dots,
V_t\} \in \cP(n)$ we define 
\begin{equation}\label{product-definition}
f_\pi(a_1, \dots, a_n) = f_{V_1}(a_1, \dots, a_n) \cdots \ab
f_{V_t}(a_1, \ab \dots, a_n)
\end{equation}
 
We can now use this notation  to define the free cumulants of a
family of random variables $\{ a_1, a_2, a_3, \dots \} \subset
(\cA, \phi)$. Let $NC(n) \subset \cP(n)$ be the subset of those
partitions which are non-crossing \cite[Lecture 9]{ns}. Then we
define multilinear functionals $(\kappa_r )_r$ implicitly by
the system of equations 
\begin{equation}\label{first-moment-cumulant}
\phi(a_1 a_2 \cdots a_n) = \sum_{\pi \in NC(n)} \kappa_\pi(a_1, a_2, \dots , a_n)
\end{equation}
Note that for each $n$ this defines $\kappa_n(a_1, a_2, \dots,
a_r)$ in terms of $\kappa_p(a_{i_1}, \dots , a_{i_p})$ for $p < n$
because $\kappa_n$ only occurs once in equation
(\ref{first-moment-cumulant}), when $\pi = 1_n = \{1, 2, \dots,
n\}$.

The theorem of Krawczyk and Speicher that we extend can now be
stated. Let $(\cA, \phi)$ be a non-commutative probability space
and $a_1,\ab \dots,\ab a_{n_1}, a_{n_1 + 1}, \dots, a_{n_1 + n_2},
\dots, a_{n_1 + \cdots + n_{p-1}+1}, \dots , a_{n_1 + \cdots +
n_p}$ be elements of $\cA$. Let $A_1 = a_1 \cdots a_{n_1}$, $A_2 =
a_{n_1+1} \cdots a_{n_1 + n_2}$, \dots, $A_p = a_{n_1 + \cdots +
n_{p-1}+1} \cdots\ab a_{n_1 + \cdots +  n_p}$. The problem is to
compute $\kappa_p(A_1, A_2, \dots, A_p)$ in terms of the cumulants
$\kappa_\sigma(a_1, \dots, a_n)$ where $n = n_1 + n_2 + \cdots +
n_p$. In \cite[Theorem 2.2]{ks} it is shown that
\begin{equation}\label{ks}
\kappa_p(A_1, \dots, A_p) = \sum_{\sigma \in NC(n)} 
\kappa_\sigma(a_1, \dots, a_n)
\end{equation}
where the sum is over all non-crossing partitions $\sigma$ such
that $\sigma \vee \tau_{\vec n} = 1_n$ and $\tau_{\vec n} = \{ (1,
\dots, n_1), (n_1 + 1, \dots, n_1 + n_2), \dots, (n_1+ \cdots +
n_{p-1}+1, \dots, n_1 + \cdots + n_p)\}$ and $1_n = \{(1, \dots,
n)\}$ (see Figure 1).

\begin{figure}[t]
\begin{center}
\leavevmode\includegraphics{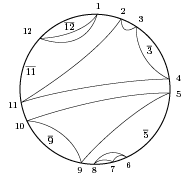}

\medskip

\leavevmode\vbox{\hsize300pt\noindent\raggedright\small{\bf
Figure 1.} In this example $n_1 =3$, $n_2 =2$, $n_3 =4$, $n_5 = 2$,
and $n_6 = 1$. $\kappa_5(a_1a_2a_3,\, a_4a_5,\, a_6a_7a_8a_9,\, a_{10}
a_{11},\, a_{12})$ will be the sum of $\kappa_\sigma(a_1, \dots,
a_{12})$'s where $\sigma$ runs over all $\sigma$ in $NC(12)$ such
that $\sigma \vee \tin = 1_{12}$ where $\tin = \{$ (1,2,3), (4,5),
(6,7,8,9), (10, 11), (12) $\}$. This condition is equivalent to the
requirement that the blocks of $\sigma$ separate the points of $\{ \ol
3,\ab \ol 5,\ab \ol 9,\ab \ol{11},\ab \ol{12}\}$. }
\end{center}
\end{figure}

The main result of this paper is to prove the analogous result for
second order cumulants, viz. to write $\kappa_{p,q}(A_1, \dots,
A_{p+q})$ in terms of first and second order cumulants of $(a_1,
\dots, a_n)$.

To describe the second order cumulants of a second order
probability space $(\cA, \phi, \phi_2)$ we need the second
order equivalent of $NC(n)$. The two parts to this extension
are (\textit{i}) the notion of a non-crossing annular
permutation (see \cite[\S3]{mn} and \cite[\S2.2]{ms}) and
(\textit{ii}) the notion of a non-crossing partitioned
permutation (see \cite[\S4]{cmss}). 
 
The non-crossing annular permutations, $S_{NC}(p,q)$, were
defined in \cite{mn} to be permutations $\pi$ in $S_{p+q}$, the
symmetric group $[p+q]$, which satisfy a geodesic condition.

\begin{equation}\label{annular-geodesic}
\#(\pi) + \#(\pi^{-1} \gamma_{p,q}) + \#(\gamma_{p,q}) = p + q
+ 2
\end{equation}
where $\#(\pi)$ denotes the number of cycles of $\pi$ and
$\gamma_{p,q}$ is the permutation with the two cycles $(1, 2, 3,
\dots, p)(p+1, \dots, p+q)$. The cycles of these permutations
can be drawn as non-crossing blocks of a $(p,q)$-annulus (see
Figure 2).

\medskip
\setbox1=\hbox{\includegraphics{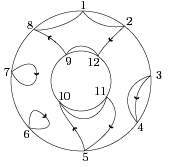}}
\setbox2=\hbox{\includegraphics{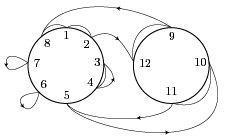}}

\begin{figure}[t]
\leavevmode\noindent\kern-3em%
$\vcenter{\hsize\wd1\box1}\vcenter{\hsize\wd2\box2}$

\begin{center}\leavevmode
\vbox{\hsize 300pt\raggedright\noindent\small
{\bf Figure 2.} A non-crossing permutation of the (8,4)-annulus
with cycles (1, 2, 12, 9, 8)(3, 4)(5, 10, 11)(6)(7). The same
permutation redrawn with the circles side-by-side.}
\end{center}
\end{figure}  

The second notion that we need is that of a non-crossing
partitioned permutation. The theory is developed in some
generality in \cite[\S4]{cmss} but we shall only need a
particular case. Let $\pi$ be a
permutation in $S_n$ and $\cV = \{ V_1, V_2, \dots, V_t\}$ be a
partition of $[n]$. If each cycle of $\pi$ is contained in some
block of $\cV$ then we say that $(\cV, \pi)$ is a
\textit{partitioned permutation}. We will frequently write this
informally as $\pi \leq \cV$.

\medskip

\begin{figure}[t]
\begin{center}\leavevmode
\includegraphics{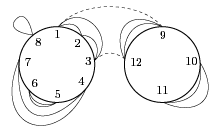}

\medskip
\leavevmode\vbox{\hsize300pt\noindent\raggedright\small
\textbf{Figure 3.} A non-crossing
partitioned permutation $(\cV, \pi)$
where $\pi = (1, 2, 3)\ab (4,7 ) \ab (5,6) (8) (9, 12) (11, 12)$
and
$\cV = \{ V_1,\ab V_2,\ab V_3,\ab  V_4, \ab V_5\}$ with $V_1
= (1, 2, 3,\ab 9, \ab 12)$, $V_2 = (4, 7)$, $V_3 = (5,\ab 6)$,
$V_4 =(8)$, and $V_5\ab = \ab (10, \ab 11)$. The block $V_1$
contains two cycles of $\pi$ --- this is indicated by the dotted
line in the diagram; all other blocks of $\cV$ contain only one
cycle of $\pi$.}

\end{center}
\end{figure}

\begin{definition}\label{non-crossing-def}
Let $p$ and $q$ be positive integers. Given $\pi \in S_{NC}(p,q)$
we let $0_\pi$ be the partition of $[p+q]$ where the blocks of
$\cV$ are exactly the cycles of $\pi$. In this way we regard $\pi$
as the partitioned permutation $(0_\pi, \pi)$. Given $\pi = \pi_1
\times \pi_2 \in NC(p) \times NC(q)$ and a partition $\cV$ of
$[p+q]$ such that $\pi \leq \cV$ we shall say $(\cV, \pi) \in
\cPS(p,q)'$ if all blocks, except one, of $\cV$ contain only one
cycle of $\pi$ and this exceptional block
contains two cycles of $\pi$ -- one of $\pi_1$ and one of $\pi_2$.
(See Figure 3.) We let $\cPS(p,q) = S_{NC}(p,q) \cup \cPS(p,q)'$
(\textit{c.f.} \cite[Definition 5.13]{cmss}).

\end{definition}

Next we wish to extend the definition in equation
(\ref{product-definition}) to the second order case.  Suppose we have
two sequences $(f_n)_n$ and
$(f_{p,q})_{p,q}$ of multi-linear functionals on a vector space
with $f_r$ being $r$-linear and invariant under cyclic permutation of its
arguments, and $f_{p,q}$ being $p+q$-linear and invariant under a cyclic
permutation of the first $p$  arguments or the last $q$ arguments.
Let $(\cV, \pi)$ be a partitioned permutation where each block of
$\cV$ contains either one or two cycles of $\pi$. If $V$ is a
block of $\cV$ containing only one cycle $(i_1,
\dots, i_s)$ of $\pi$ then we define $f_V(a_1, \dots, a_{p+q})$ to be
$f_s(a_{i_1}, \dots, a_{i_s})$, as was done for equation
(\ref{functional}). If $V$ contains two cycles $(i_1, \dots, i_s)$
and $(j_1, \dots , j_t)$ of $\pi$ then we define $f_V(a_1, \dots,
a_{p+q})$ to be $f_{s,t}(a_{i_1}, \dots, a_{i_s}, a_{j_1}, \dots,
a_{j_t})$. We then define
\begin{equation}\label{product-definition2}
f_{(\cV, \pi)}(a_1, \dots , a_{p+q}) =
f_{V_1}(a_1, \dots, a_{p+q})
 \cdots
f_{V_l}(a_1, \dots, a_{p+q})
\end{equation}
where $\cV = \{V_1, V_2, \dots, V_l \}$. 

\begin{definition}\label{second-moment-cumulant}
Let $(\cA, \phi, \phi_2)$ be a second order probability space and
$(\kappa_n)_n$ the first order cumulants given by equation
(\ref{first-moment-cumulant}). We define recursively second order
cumulants $(\kappa_{p,q})_{p,q}$ which will be multi-linear functionals
on $\cA$ by the system of equations
\begin{equation}\label{second-moment-cumulant-eq}
\phi_2(a_1 \cdots a_p, a_{p+1} \cdots a_{p+q} )
= \sum_{(\cV, \pi) \in \cPS(p,q)}
\kappa_{(\cV, \pi)}(a_1, \dots, a_{p+q})
\end{equation}
\end{definition}

In equation (\ref{second-moment-cumulant-eq}) the term
$\kappa_{p,q}(a_1, \dots, a_{p+q})$ only occurs once: when $\cV =
1_{p+q}$ and $\pi = \gamma_{p,q}$ where $\gamma_{p,q}$ is the
permutation with two cycles $(1, 2, \dots, p)(p+1, \dots, p+q)$.
For all other $(\cV, \pi)$, $\kappa_{(\cV, \pi)}$ is a product of
$\kappa_n$'s and $\kappa_{r,s}$'s with either $r < p$ or $s < q$.

It is usually convenient to write equation
(\ref{second-moment-cumulant-eq}) as a sum with two terms: the first
term only involves first order cumulants and the second term both
orders of cumulants. Recall that $\cPS(p, q) = S_{NC}(p,q) \cup
\cPS(p, q)'$ and so
\begin{eqnarray} \label{second-moment-cumulant-eq2}
\phi_2(a_1 \cdots a_p, a_{p+1} \cdots a_{p+q} )
&=&  \sum_{\pi \in S_{NC}(p,q)}
\kappa_\pi(a_1, \dots, a_{p+q}) \notag \\ 
&& \mbox{} +
\sum_{(\cV, \pi) \in \cPS(p,q)'}
\kappa_{(\cV, \pi)}(a_1, \dots, a_{p+q})
\end{eqnarray}

To illustrate this definition let us work out the first few
second order cumulants. When $p = q =1$ equation
(\ref{product-definition2}) becomes
\[
\phi_2(a, b ) = \kappa_2(a,b) + \kappa_{1,1}(a,b)
\]
because there are two elements in $\cPS(1, 1)$; the first is when
$(\cV, \pi ) = (1_2, (1,2))$ and the second when $(\cV, \pi) =
(1_2, (1)(2))$. Here we are writing the permutations
in cycle notation and $1_2$ is the partition of
$[2]$ that has one block.  By
equation (\ref{first-moment-cumulant}) $\kappa_2(a,b) = \phi(ab) -
\phi(a) \phi(b)$ . So solving for $\kappa_{1,1}(a, b)$ we obtain
that
\[
\kappa_{1,1}(a, b) = \phi_2(a,b) + \phi(a) \phi(b) - \phi(ab)
\]

Let us next look at the case $p=2$ and $q=1$. There are four
partitioned permutations $(\cV, \pi)$ with $\cV = 0_\pi$. They
are $\pi_1 = (1,3,2)$, $\pi_2 = (1,2,3)$, $\pi_3 =  (1,3)(2)$ and
$\pi_4 = (1)(2,3)$. These are the four non-crossing annular
permutations of a $(2, 1)$-annulus (see Figure 4).

\begin{figure}[t]
\begin{center}
\leavevmode\includegraphics{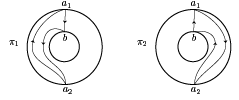}
\leavevmode\leavevmode\includegraphics{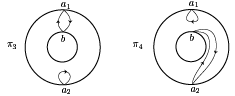}

\medskip
\leavevmode\vbox{\hsize300pt\small\noindent\raggedright
{\bf Figure 4.} The four non-crossing permutations of a $(2,
1)$-annulus.}
\end{center}\end{figure}
\medskip
The contributions of these four diagrams is $\kappa_3(a_1, b,
a_2) + \kappa_3(a_1, a_2, b) + \kappa_2(a_1, b) \kappa_1(a_2) +
\kappa_1(a_1) \kappa_2(a_2, b)$. 

There are three diagrams that involve second order cumulants;
they correspond to the three partitioned permutations:
$(1_3, \gamma_{2,1})$, $(\cV_1, e)$, and $(\cV_2, e)$, where
$1_3 = ((1,2,3))$, $\gamma_{2,1} = (1,2)(3)$, $e = (1)(2)(3)$ is
the identity permutation, $\cV_1 = \{ (1,3), (2) \}$, and $\cV_2 =
\{ (1), (2,3) \}$ (see Figure 5).  

The contribution of these three diagrams is $\kappa_{2,1}(a_1,
a_2, b) + \kappa_{1,1}(a_1, b)\ab \kappa_1(a_2) + \kappa_1(a_1)
\kappa_{1,1}(a_2, b)$. Note that second order cumulants only
appear when a block of $\cV$ connects two cycles of $\pi$.
Putting together all the terms we see that

\begin{eqnarray*}
\kappa_{2,1}(a_1, a_2, b) 
&=& \phi_2(a_1 a_2, b) - \phi(a_1)
\phi_2(a_2, b) - \phi(a_2) \phi_2(a_1, b) \\
&&\mbox{} - \phi(a_1 a_2 b) -
\phi(a_1 b a_2)  + 2 \phi(a_1) \phi(a_2 b) \\
&& \mbox{} + 2 \phi(a_1 b)
\phi(a_2) + 2\phi(a_1 a_2) \phi(b)\\
&& \mbox{} - 4 \phi(a_1) \phi(a_2)
\phi(b)
\end{eqnarray*}

In \cite[Definition 7.4]{cmss} a general formula was given for
writing any cumulant in terms of the moments using the higher
order M\"obius function. We shall prefer to use equation
(\ref{second-moment-cumulant-eq}). 

\bigskip
\begin{figure}[t]
\noindent
\kern-3em\includegraphics{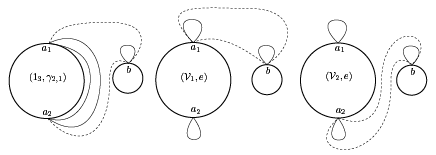}
\leavevmode\vbox{\hsize300pt\small\noindent\raggedright
{\bf Figure 5.} The three diagrams that involve second order
cumulants.}
\end{figure}
\bigskip

In equation (\ref{ks})
the condition on $\pi$ was that $\pi \vee \tau_{\vec r} = 1_n$. We
shall show that this is equivalent to the condition that $\pi^{-1}
\gamma_n$ \textit{separates} the points of $N = \{n_1, n_1 + n_2,
\dots, n_1 + \cdots + n_r
\}$, i.e. that no two points of $N$ lie in the same cycle of
$\pi^{-1}
\gamma_n$. When we pass to the annular case this condition becomes
that $\pi^{-1} \gamma_{p,q}$ separates the points of $N$.

\begin{theorem}\label{main}
Suppose $n_1, \dots , n_r, n_{r+1}, \dots, n_{r+s}$ are
positive integers,
$p = n_1 + \cdots + n_r$, $q = n_{r+1} + \cdots + n_{r+s}$, and 
\[
N = \{n_1, n_1 + n_2,\dots,  n_1 + \cdots + n_{r+s} \}
\]
Given a second order probability space
$(\cA,
\phi, \phi_2)$ and 
\[
a_1, \dots, a_{n_1}, a_{n_1 + 1}, \dots,
a_{n_1+n_2}, \dots, a_{n_1  + \cdots + n_{r+s}} \in \cA
\] 
Let
$A_1 = a_1 \cdots a_{n_1}$, \dots, $A_{r+s} = 
a_{n_1 + \cdots + n_{r+s-1} + 1}
\ab\cdots \ab a_{n_1 + \cdots + n_{r+s}}$. Then
\begin{equation}\label{main-eq}
\kappa_{r,s}(A_1, \dots, A_r, A_{r+1}, \dots, A_{r+s} )
=\sum_{(\cV, \pi)}
\kappa_{(\cV, \pi)}(a_1, \dots, a_{p+q})
\end{equation}
where the summation is over those
$(\cV, \pi) \in \cPS(p,q)$ such $\pi^{-1}\gamma_{p,q}$ separates the
points of $N$.
\end{theorem}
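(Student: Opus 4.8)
The plan is to induct on $r+s$, using the recursive definition (\ref{second-moment-cumulant-eq}) to isolate the top term. Because $A_1\cdots A_r=a_1\cdots a_p$ and $A_{r+1}\cdots A_{r+s}=a_{p+1}\cdots a_{p+q}$, applying (\ref{second-moment-cumulant-eq}) in the letters $a_1,\dots,a_{p+q}$ gives
\[
\phi_2(A_1\cdots A_r,\,A_{r+1}\cdots A_{r+s})=\sum_{(\cV,\pi)\in\cPS(p,q)}\kappa_{(\cV,\pi)}(a_1,\dots,a_{p+q}),
\]
while applying it in the letters $A_1,\dots,A_{r+s}$ gives
\[
\phi_2(A_1\cdots A_r,\,A_{r+1}\cdots A_{r+s})=\kappa_{r,s}(A_1,\dots,A_{r+s})+\sum_{(\cW,\rho)}\kappa_{(\cW,\rho)}(A_1,\dots,A_{r+s}),
\]
the last sum running over the $(\cW,\rho)\in\cPS(r,s)$ other than the top element $(1_{r+s},\gamma_{r,s})$. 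Subtracting, the theorem becomes the statement that, once each $\kappa_{(\cW,\rho)}(A_1,\dots,A_{r+s})$ with $(\cW,\rho)$ non-top is rewritten in terms of the $a_j$'s, the resulting terms are exactly the $\kappa_{(\cV,\pi)}(a_1,\dots,a_{p+q})$ for which $\pi^{-1}\gamma_{p,q}$ does \emph{not} separate $N$.

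For a non-top $(\cW,\rho)$ the product rule (\ref{product-definition2}) writes $\kappa_{(\cW,\rho)}(A_1,\dots,A_{r+s})$ as a product of first-order cumulants $\kappa_m(A_{i_1},\dots,A_{i_m})$, one for each block of $\cW$ carrying a single cycle of $\rho$, together with — when $(\cW,\rho)\in\cPS(r,s)'$ — one second-order cumulant $\kappa_{r',s'}$ of the $A_i$'s lying in the exceptional block, with $r'+s'<r+s$. Each first-order factor is expanded by the Krawczyk--Speicher formula (\ref{ks}), applied to the sub-collection of the $a_j$'s making up those $A_i$'s with its induced interval structure, and the single second-order factor is expanded by the induction hypothesis; multiplying these expansions out exhibits $\kappa_{(\cW,\rho)}(A_1,\dots,A_{r+s})$ as $\sum_{(\cV,\pi)\in E(\cW,\rho)}\kappa_{(\cV,\pi)}(a_1,\dots,a_{p+q})$ for an explicit set $E(\cW,\rho)\subseteq\cPS(p,q)$. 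The base case $r+s=2$ needs no induction: the unique non-top element of $\cPS(1,1)$ contributes $\kappa_2(A_1,A_2)$, which (\ref{ks}) expands outright.

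The heart of the argument is then the combinatorial lemma that the sets $E(\cW,\rho)$, as $(\cW,\rho)$ ranges over the non-top elements of $\cPS(r,s)$, together with $\{(\cV,\pi)\in\cPS(p,q):\pi^{-1}\gamma_{p,q}\text{ separates }N\}$, partition $\cPS(p,q)$. Equivalently: from $(\cV,\pi)\in\cPS(p,q)$ one must read off, canonically, either that $\pi^{-1}\gamma_{p,q}$ separates $N$ (the ``top'' case), or else a unique non-top $(\cW,\rho)\in\cPS(r,s)$ — obtained by grouping the $a_j$'s back into the $A_i$'s — together with the refining data witnessing $(\cV,\pi)\in E(\cW,\rho)$, and these assignments must be mutually exclusive and exhaustive. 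The first-order core of this is the equivalence (stated just before the theorem, and which one proves directly) between $\sigma\vee\tau_{\vec n}=1_n$ and the separation of $N$ by $\sigma^{-1}\gamma_n$ for $\sigma\in NC(n)$; the genuinely new part is its annular counterpart, which uses the cycle description of the non-crossing annular permutations of $S_{NC}(p,q)$ from \cite{mn} and the partitioned-permutation formalism of \cite{cmss} to control the single block joining two cycles in a $\cPS(p,q)'$ diagram.

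The main obstacle is precisely this partition lemma. On the disk the pieces of it — preservation of non-crossingness under grouping adjacent intervals, the behaviour of the Kreweras complement, and the equivalence above — are classical; carrying them over to the $(p,q)$-annulus is delicate, because the through-cycles of a non-crossing annular permutation and the lone two-cycle block of a $\cPS(p,q)'$ diagram interact non-trivially with the grouping $[p+q]\to[r+s]$: one must check both that every $(\cV,\pi)$ for which $\pi^{-1}\gamma_{p,q}$ fails to separate $N$ falls into some $E(\cW,\rho)$, and that the $E(\cW,\rho)$ for distinct non-top $(\cW,\rho)$ are pairwise disjoint and disjoint from the separating set. Granting this, summing the expansions and subtracting from $\sum_{(\cV,\pi)\in\cPS(p,q)}\kappa_{(\cV,\pi)}(a_1,\dots,a_{p+q})$ yields (\ref{main-eq}).
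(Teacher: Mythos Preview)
Your proposal is correct and follows essentially the same strategy as the paper: expand $\phi_2$ twice via (\ref{second-moment-cumulant-eq}), isolate the top cumulant, expand the non-top $\kappa_{(\cW,\rho)}(A_1,\dots,A_{r+s})$ using (\ref{ks}) on first-order factors and induction on the second-order factor, and reduce to a partition lemma for $\cPS(p,q)$. The paper establishes your ``partition lemma'' by splitting into two cases according to whether some cycle of $\sigma^{-1}\gamma_{p,q}$ meets both $N_1$ and $N_2$ (Lemmas~\ref{first-order-uniqueness} and~\ref{second-order-uniqueness}), recovering the unique $(\cW,\rho)$ from $(\sigma^{-1}\gamma_{p,q})|_N$ via the intertwining of Lemma~\ref{intertwining}; this is exactly the annular uniqueness step you flag as the main obstacle.
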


In section 2 we shall present some preliminary results on
non-crossing permutations needed for the proof of the main
theorem, which will appear in section 3. In section 4 we will give
some examples of how the theorem can be used and concluding remarks.

\section{preliminaries on non-crossing permutations}

In this section  we shall recall the notation we shall need for
the proof of the main theorem and translate the criterion of
equation (\ref{ks}) into a criterion appropriate to the annular
case: Proposition \ref{ksa}

\begin{notation}
Let $\pi \in S_n$ and let $\#(\pi)$ be the number of cycles in the
cycle decomposition of $\pi$. Let $\gamma_n = (1, 2, 3, \dots,
n)$. We put the metric on $S_n$ given by $|\pi| = d(e, \pi) = n -
\#(\pi)$ where $e$ is the identity permutation and $d(\pi, \sigma)=
d(e,\pi^{-1} \sigma) = d(e, \sigma \pi^{-1})$. For $\pi \in S_n$
to be in $NC(n)$ it is necessary and sufficient that
\[
\#(\pi) + \#(\pi^{-1} \gamma_n) + \#(\gamma_n) = n + 2
\]
which is equivalent to the condition in Biane \cite{b}
\[
|\pi| + |\pi^{-1}\gamma_n| = |\gamma_n|
\]
In addition, Biane showed that for $\pi$, $\sigma \in NC(n)$ the
relation $\pi \leq \sigma$ is equivalent to the equation $|\pi| +
|\pi^{-1}\sigma| = |\sigma|$, where the relation $\pi
\leq \sigma$ means that each cycle of $\pi$ is contained in some
cycle of $\sigma$. Recall that with this partial order $NC(n)$ is
a lattice; the supremum of two non-crossing permutations will be
denoted $\pi \vee \sigma$. 

For $\pi \in S_{p+q}$ to be in $S_{NC}(p,q)$ it is necessary and
sufficient (see \cite{mn}) that at least one cycle of $\pi$ connects the two
cycles of $\gamma_{p,q}$ and 
\[
\#(\pi) + \#(\pi^{-1} \gamma_{p,q}) + \#(\gamma_{p,q}) = p + q + 2 
\]
where $\gamma_{p,q}$ is the permutation with two cycles $(1, 2, 3,
\dots, p)(p+1, \dots, p+q)$. In terms of the metric this condition
becomes $|\pi| + |\pi^{-1} \gamma_{p,q}| = |\gamma_{p,q}| + 2$. In
\cite[\S 4.5]{cmss} a general theory of length of a partitioned
permutation is given. 

Let $\pi \in S_n$ and $N \subset [n]$ be a subset. If $\pi$ leaves
$N$ invariant we denote by $\pi|_N$ the restriction of $\pi$ to
$N$. If $\pi$ does not leave $N$ invariant we can still define a
permutation on $N$ as follows. If $p \in N$ and $\pi(p) \not\in
N$ then  $\pi|_N(p) = \pi^k(p)$ where $k$ is the  smallest integer
such that $\pi^{k-1}(p) \not\in N$ but $\pi^k(p) \in N$. Let $NC(N)$
denote the non-crossing permutations of $N$ where the points are in
the same order as they are in $[n]$. If $\pi$ is in $NC(n)$ then
$\pi|_N$ is in $NC(N)$, as a crossing for $\pi|_N$ would also be a
crossing for $\pi$. 

\end{notation}

Let $p$ and $q$ be positive integers and $N \subset [p + q]$.
Let $N_1 = N \cap [p]$ and $N_2 = N \cap [p+1, p+q]$. We
denote by $S_{NC}(N_1, N_2)$ the non-crossing annular permutations
of $N$ arranged on an annulus with the
points of $N_1$ on the outer circle and the points of $N_2$ on the
inner circle.  

\begin{lemma}\label{restriction}
Given $\pi \in S_{NC}(p, q)$,   $\pi|_N \in S_{NC}(N_1, N_2) \cup
(NC(p) \times NC(q))$.
\end{lemma}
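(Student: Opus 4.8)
The plan is to analyze how the non-crossing annular permutation $\pi \in S_{NC}(p,q)$ behaves when restricted to the subset $N$, using the metric characterization recalled in the Notation. Recall that $\pi \in S_{NC}(p,q)$ means $|\pi| + |\pi^{-1}\gamma_{p,q}| = |\gamma_{p,q}| + 2$ together with the connectedness condition that at least one cycle of $\pi$ joins the two cycles of $\gamma_{p,q}$. The restriction operation $\pi \mapsto \pi|_N$ is designed to preserve non-crossing structure, so the first step is to make precise the relationship between $\gamma_{p,q}$ and the analogous permutation $\gamma_{N_1,N_2}$ (the permutation with cycles consisting of $N_1$ in cyclic order and $N_2$ in cyclic order). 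Indeed one checks that $(\gamma_{p,q})|_N = \gamma_{N_1,N_2}$, since following $\gamma_{p,q}$ around a circle until returning to $N$ just skips over the points not in $N$ while staying on the same circle.

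First I would establish that $\pi|_N$ is always a geodesic permutation in the appropriate ambient sense, i.e. that $|\pi|_N| + |(\pi|_N)^{-1}\gamma_{N_1,N_2}|$ equals either $|\gamma_{N_1,N_2}|$ (the disc case $NC(p)\times NC(q)$, interpreted on $N_1$ and $N_2$ separately) or $|\gamma_{N_1,N_2}| + 2$ (the genuine annular case). The cleanest route is to invoke the known fact that restriction to a subset does not increase distances in a way that violates geodesics: a crossing of $\pi|_N$ relative to $\gamma_{N_1,N_2}$ would lift to a crossing of $\pi$ relative to $\gamma_{p,q}$, exactly as the excerpt already notes for the disc case $NC(n)$. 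So $\pi|_N$ lies on a geodesic; the only question is which of the two geodesic lengths it realizes, and this is governed precisely by whether any cycle of $\pi|_N$ still connects $N_1$ to $N_2$.

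The key dichotomy is then: either some cycle of $\pi$ that connected the two circles survives the restriction with points on both circles still present, in which case $\pi|_N$ remains ``connected'' and lands in $S_{NC}(N_1,N_2)$; or else, after deleting the points outside $N$, every remaining cycle lies entirely on one circle, in which case $\pi|_N$ splits as a pair $(\sigma_1, \sigma_2)$ with $\sigma_1$ a non-crossing permutation of $N_1$ and $\sigma_2$ a non-crossing permutation of $N_2$, giving an element of $NC(p)\times NC(q)$ (realized on $N_1, N_2$). To handle the first case carefully one must check that the geodesic condition $|\pi|_N| + |(\pi|_N)^{-1}\gamma_{N_1,N_2}| = |\gamma_{N_1,N_2}| + 2$ genuinely holds, not merely $\le$; this follows because in the annular setting the presence of a connecting cycle forces the defect to be exactly $2$ rather than $0$, by the classification of geodesics between $\gamma_{p,q}$-type permutations in \cite{mn}. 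In the second case one checks the two within-circle restrictions are each non-crossing by the same crossing-lifting argument used for $NC(n)$.

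The main obstacle I anticipate is the bookkeeping in the first case: verifying that after restriction the connecting cycle does not accidentally disconnect (i.e. that if $\pi$ has a connecting cycle then so does $\pi|_N$, which can fail if all but one of the points of that cycle on one circle get deleted — but then one must track whether another cycle picks up the connection), and confirming the length defect is exactly $2$. A clean way around the case analysis is to argue structurally: since $\pi|_N$ is non-crossing on the annulus in the combinatorial sense (no crossing strands), its cycle structure drawn on the $(N_1,N_2)$-annulus has blocks that are either all ``disc-type'' (contained in one circle) or include at least one ``through'' block, and these are exactly the two stated possibilities, $NC(p)\times NC(q)$ versus $S_{NC}(N_1,N_2)$. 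I would present the proof mostly via this geometric/combinatorial ``no crossing survives restriction'' principle, reducing to the already-cited disc fact, and only invoke the metric identities to pin down the length defect when needed.
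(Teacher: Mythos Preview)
Your proposal takes a genuinely different route from the paper. The paper's proof is a three-line reduction to the disc case: by \cite[Equation 5.2]{mn} there exist integers $u,v$ such that $\tilde\pi = \gamma_p^u \gamma_q^v \pi \gamma_q^{-v}\gamma_p^{-u}$ lies in $NC(p+q)$; setting $\tilde N = \gamma_p^u\gamma_q^v(N)$, one has $\tilde\pi|_{\tilde N} \in NC(\tilde N)$ by the disc restriction fact already recorded in the Notation, and $\pi|_N$ is conjugate to $\tilde\pi|_{\tilde N}$ via the same rotation. The dichotomy (annular versus product of discs) then falls out automatically from whether the resulting non-crossing permutation of $\tilde N$ has a block straddling $p$ and $p+1$. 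So the paper never touches the metric identity for $\pi|_N$ directly --- it outsources everything to the disc case via this conjugation trick.

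Your approach, by contrast, attacks the metric identity $|\pi|_N| + |(\pi|_N)^{-1}\gamma_{N_1,N_2}| \in \{|\gamma_{N_1,N_2}|,\, |\gamma_{N_1,N_2}|+2\}$ head-on, and then splits on connectedness. This is sound in spirit, but the step where you assert that ``a crossing of $\pi|_N$ would lift to a crossing of $\pi$'' is doing real work in the annular setting that is not just the disc argument verbatim: the annular non-crossing condition is not a local planarity condition, and the ``$+2$'' defect does not obviously survive restriction without further argument. You correctly flag this as the main obstacle. The paper's conjugation-to-$NC(p+q)$ manoeuvre is precisely the device that collapses this annular bookkeeping to the trivial disc case, and is worth knowing --- it reappears later in the paper (see the Remark after Definition~\ref{order1}) as the standard way to transport annular statements back to disc statements.
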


\begin{proof}
By \cite[Equation 5.2]{mn} there are integers $u$ and $v$ such that
$\tilde\pi = \gamma_p^u \gamma_q^v \pi \gamma_q^{-v} \gamma_p^{-u}
\in NC(p+q)$. Let $\tilde N = \gamma_p^u \gamma_q ^v(N)$. Then
$\tilde\pi|_{\tilde N} \in NC(\tilde N)$ and $\pi|_N$ and $\tilde
\pi|_{\tilde N}$ are conjugate. Thus $\pi|_N \in S_{NC}(N_1, N_2) \cup
NC(p) \times NC(q)$.
\end{proof}

\begin{lemma}\label{invariant}
Let $N$ be a subset of $[n]$ and $\sigma, \pi \in S_n$ such that
$\pi(i) = i$ for $i \not\in N$ (and thus $\pi$ leaves $N$ invariant).
Then
$(\sigma\pi)|_N =
\sigma|_N\, \pi|_N$. 
\end{lemma}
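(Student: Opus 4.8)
The plan is to prove Lemma~\ref{invariant} by a direct orbit-chasing argument, tracking how the induced permutation $(\sigma\pi)|_N$ acts on a point of $N$ and comparing it to the composition $\sigma|_N \circ \pi|_N$. Fix $p \in N$. First I would compute $\pi|_N(p)$: since $\pi$ leaves $N$ invariant, and in fact fixes every point outside $N$, every point in the $\pi$-orbit of $p$ lies in $N$, so $\pi|_N(p) = \pi(p)$ simply (the ``smallest $k$'' in the definition of the restriction is $k=1$). Thus $\pi|_N = \pi$ on $N$, viewed as a permutation of $N$. The substance of the lemma is therefore that $(\sigma\pi)|_N(p) = \sigma|_N(\pi(p))$ for all $p \in N$.

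For this, set $q = \pi(p) \in N$ and compute both sides from the definition of the induced map. On the right-hand side, $\sigma|_N(q) = \sigma^k(q)$ where $k \geq 1$ is minimal with $\sigma^{k-1}(q) \notin N$ (for $k>1$) but $\sigma^k(q) \in N$. On the left-hand side, $(\sigma\pi)|_N(p) = (\sigma\pi)^m(p)$ where $m$ is minimal with $(\sigma\pi)^m(p) \in N$. The key observation is that because $\pi$ is the identity off $N$, once an orbit of $\sigma\pi$ starting at $p$ leaves $N$ it behaves exactly like an orbit of $\sigma$: if $x \notin N$ then $(\sigma\pi)(x) = \sigma(\pi(x)) = \sigma(x)$. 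So $(\sigma\pi)(p) = \sigma(\pi(p)) = \sigma(q)$, and for as long as the successive images $\sigma(q), \sigma^2(q), \dots$ stay outside $N$ we have $(\sigma\pi)^{j}(p) = \sigma^{j-1}(q) = \sigma^{j}(\pi(p))$; the first return to $N$ on the left at step $m = k$ coincides with the first return to $N$ on the right, and the values agree. I would write this out as a short induction on $j$, or equivalently observe that $(\sigma\pi)^j(p) = \sigma^{j-1}(q)$ holds for all $j \geq 1$ up to and including the first index where the value lands back in $N$, since no intermediate value lies in $N$ and hence $\pi$ acts trivially at each intermediate step.

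Finally I would note that this identity of values at each $p \in N$, together with the fact that $(\sigma\pi)|_N$, $\sigma|_N$, $\pi|_N$ are all genuine permutations of the finite set $N$, gives the equality of permutations $(\sigma\pi)|_N = \sigma|_N\,\pi|_N$. The only point requiring any care --- and the main (minor) obstacle --- is bookkeeping the edge case $k=1$ versus $k>1$ in the definition of $\sigma|_N$, i.e.\ handling uniformly whether $\sigma(q)$ is already in $N$ or whether one must pass through points outside $N$; phrasing the argument as ``$(\sigma\pi)^j(p) = \sigma^{j-1}(\pi(p))$ for all $j$ in the range $1 \le j \le m$, where $m$ is the first return time'' sidesteps this cleanly. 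No deep input is needed; this is purely a consequence of $\pi$ being supported on $N$.
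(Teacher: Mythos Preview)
Your approach is essentially identical to the paper's: both observe that $\pi|_N = \pi$ on $N$ and then track the $\sigma\pi$-orbit of $p$, using that $\pi$ acts trivially off $N$ so that $(\sigma\pi)^j(p) = \sigma^j(\pi(p))$ until the first return to $N$, whence $(\sigma\pi)|_N(p) = \sigma|_N(\pi(p))$. One small slip to fix: your displayed identity should read $(\sigma\pi)^j(p) = \sigma^j(q)$ (not $\sigma^{j-1}(q)$), consistent with your own computation $(\sigma\pi)(p) = \sigma(q)$.
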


\begin{proof}
Let $m \in N$ and suppose $(\sigma\pi)|_N(m) = (\sigma\pi)^k(m)$.
Then for $1 \leq i < k$, $\sigma^i(\pi(m)) \not\in N$  and
$\sigma^k(\pi(m)) \in N$. Thus $\sigma^i|_N (\pi(m)) \not\in N$ for
$1 \leq i < k$ and $\sigma^k(\pi(m)) \in N$. Hence $\sigma|_N(
\pi(m)) = (\sigma\pi)|_N(m)$. Hence $\sigma|_N\, \pi|_N =
(\sigma\pi)|_N$. 
\end{proof}

The next lemma is a special case of \cite[Proposition 4.10]{cmss}.
\begin{lemma}\label{transitive}
Suppose $\pi$, $\sigma$, and $\tau$ are in $S_n$ and $|\pi| +
|\pi^{-1} \sigma| = |\sigma|$ and $|\sigma| + |\sigma^{-1} \tau| =
|\tau|$. Then $|\pi| + |\pi^{-1}\tau| = |\tau|$.
\end{lemma}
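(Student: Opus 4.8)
The plan is to deduce the conclusion purely from the triangle inequality for the length function $|\cdot|$, together with the factorization $\pi^{-1}\tau = (\pi^{-1}\sigma)(\sigma^{-1}\tau)$. The key preliminary observation is that $|\cdot|$ is a genuine word length on $S_n$: since $|\alpha| = n - \#(\alpha)$ equals the minimal number of transpositions needed to write $\alpha$, and the transpositions form a symmetric (indeed conjugation-invariant) generating set, the length is subadditive, $|\alpha\beta| \le |\alpha| + |\beta|$ for all $\alpha, \beta \in S_n$. Equivalently, $d(\rho, \nu) = |\rho^{-1}\nu|$ is a metric, so the triangle inequality $|\rho^{-1}\nu| \le |\rho^{-1}\mu| + |\mu^{-1}\nu|$ holds for all $\rho, \mu, \nu \in S_n$. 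I would record this at the outset, since every step below is an instance of it.

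First I would read the two hypotheses geometrically. The relation $|\pi| + |\pi^{-1}\sigma| = |\sigma|$ says that $\pi$ lies on a geodesic from $e$ to $\sigma$, and $|\sigma| + |\sigma^{-1}\tau| = |\tau|$ says that $\sigma$ lies on a geodesic from $e$ to $\tau$. The assertion to be proved, $|\pi| + |\pi^{-1}\tau| = |\tau|$, says that $\pi$ lies on a geodesic from $e$ to $\tau$; thus the lemma is exactly the transitivity of the relation ``lies on a geodesic emanating from $e$.''

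The computation itself is short. Substituting the first hypothesis into the second gives
\[
|\tau| = |\sigma| + |\sigma^{-1}\tau| = |\pi| + |\pi^{-1}\sigma| + |\sigma^{-1}\tau|.
\]
Next I would invoke the identity $\pi^{-1}\tau = (\pi^{-1}\sigma)(\sigma^{-1}\tau)$ and apply subadditivity to this factor pair, namely $|\pi^{-1}\tau| \le |\pi^{-1}\sigma| + |\sigma^{-1}\tau|$, which converts the displayed equality into $|\pi| + |\pi^{-1}\tau| \le |\tau|$. The reverse inequality $|\tau| \le |\pi| + |\pi^{-1}\tau|$ is once more the triangle inequality, applied with $\rho = e$, $\mu = \pi$, and $\nu = \tau$. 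Combining the two inequalities yields the desired equality.

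There is essentially no obstacle here beyond the bookkeeping: the sole fact that needs to be in hand is the subadditivity of $|\cdot|$, which is standard for any word length. I would nonetheless state it explicitly rather than take it for granted, since it is the single ingredient doing all the work; everything else is algebraic rearrangement organized around the identity $\pi^{-1}\tau = (\pi^{-1}\sigma)(\sigma^{-1}\tau)$.
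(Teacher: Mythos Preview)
Your proof is correct. The paper does not actually prove this lemma; it simply records it as ``a special case of \cite[Proposition 4.10]{cmss},'' which is a general associativity statement for the product of partitioned permutations. Your argument is more elementary and self-contained: you extract the sole fact needed, subadditivity of the Cayley length $|\alpha| = n - \#(\alpha)$, and chain the two geodesic equalities through the factorization $\pi^{-1}\tau = (\pi^{-1}\sigma)(\sigma^{-1}\tau)$. The paper's citation buys generality (the cited proposition covers partitioned permutations of arbitrary type), while your approach buys transparency and independence from the machinery of \cite{cmss}; for the purposes of this lemma your route is preferable.
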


In \cite[Theorem 1]{b} Biane showed that if $\sigma$ has only one
cycle and $|\pi| + |\pi^{-1} \sigma| = |\sigma|$ then the cycles of
$\pi$ form a non-crossing partition of $\sigma$. We shall need the
extension of this to the case where $\sigma$ has more than one
cycle.

\begin{lemma}\label{metric-order}
Suppose $\pi, \sigma \in S_n$ and $|\pi| + |\pi^{-1}\sigma| =
|\sigma|$. Then
\begin{enumerate}
\item each cycle of $\pi$ is contained in some cycle of $\sigma$,
and
\item for each cycle $c$ of $\sigma$ the enclosed cycles of $\pi$
form a non-crossing partition of $c$.
\end{enumerate}
\end{lemma}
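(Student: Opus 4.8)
The plan is to reduce Lemma~\ref{metric-order} to the single-cycle case that Biane established, by localizing the hypothesis to each cycle of $\sigma$. The key observation is that the metric identity $|\pi| + |\pi^{-1}\sigma| = |\sigma|$ is, after unwinding the definition $|\tau| = n - \#(\tau)$, equivalent to
\[
\#(\pi) + \#(\pi^{-1}\sigma) = n + \#(\sigma).
\]
I would first show that this global identity forces each cycle of $\pi$ to be contained in a single cycle of $\sigma$; this is essentially a counting argument. Consider the partition of $[n]$ into the cycles of $\sigma$, say $[n] = C_1 \sqcup \cdots \sqcup C_k$. If some cycle of $\pi$ met two different $C_j$'s, then $\pi$ would not leave the partition $\{C_1,\dots,C_k\}$ invariant, and one can compare $\#(\pi^{-1}\sigma)$ against what it would be if $\pi$ respected the decomposition. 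More cleanly: let $\rho$ be the permutation obtained from $\pi$ by ``splitting'' each cycle of $\pi$ along the blocks $C_j$ (replacing a cycle $c$ by the cycles $c|_{C_j \cap c}$ it induces on each block it meets). Then $\rho \le \{C_1,\dots,C_k\}$, one has $|\rho| \le |\pi|$ with equality iff $\pi$ already respects the decomposition, and $\rho^{-1}\sigma$ acts blockwise. A short computation with cycle counts, together with the triangle inequality $|\rho| + |\rho^{-1}\sigma| \ge |\sigma|$ applied blockwise (each block contributing a term $\ge 0$ by Biane's single-cycle bound $|\pi_j| + |\pi_j^{-1} c_j| \ge |c_j|$), pins down that equality in the hypothesis forces $\pi = \rho$, i.e.\ part (1).

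Once part (1) is in hand, I would restrict everything to a single cycle $c$ of $\sigma$, say $c$ acts on the block $C = C_j$. By part (1), $\pi$ leaves $C$ invariant, so $\pi|_C$ is genuinely the restriction in the naive sense, and $\pi^{-1}\sigma$ also leaves $C$ invariant with $(\pi^{-1}\sigma)|_C = (\pi|_C)^{-1} \, c$ by Lemma~\ref{invariant} (taking $N = C$ there, noting $\sigma|_C = c$ and that $\pi$ fixes nothing outside — more precisely one applies Lemma~\ref{invariant} after observing both $\pi$ and $\sigma$ leave $C$ invariant so their actions decompose as products over the blocks). Summing the cycle-count identity over the blocks $C_1,\dots,C_k$ and using that the total must equal $n + \#(\sigma) = \sum_j (|C_j| + 1)$, while each block contributes at least $|C_j| + 1$ worth of cycles via Biane's inequality, forces equality in \emph{each} block: $|\pi|_{C_j}| + |(\pi|_{C_j})^{-1} c_j| = |c_j|$ for every $j$. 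Now $c_j$ is a single cycle on $C_j$, so Biane's Theorem~1 from \cite{b} applies directly and tells us the cycles of $\pi|_{C_j}$ form a non-crossing partition of $c_j$. Since by part (1) the cycles of $\pi$ contained in $C_j$ are exactly the cycles of $\pi|_{C_j}$, this is precisely statement (2).

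The main obstacle I anticipate is the bookkeeping in the first step: making rigorous the claim that equality in the metric identity propagates to equality blockwise, i.e.\ that one cannot ``borrow'' slack between blocks. The clean way to handle this is to phrase it as: define $f(\pi) = \#(\pi) + \#(\pi^{-1}\sigma)$; show $f(\pi) \le \sum_j \bigl(\#(\pi|_{C_j}) + \#((\pi|_{C_j})^{-1} c_j)\bigr)$ with the right-hand side $\le \sum_j (|C_j| + 1) = n + \#(\sigma)$ by Biane, and that the hypothesis says $f(\pi) = n + \#(\sigma)$, so both inequalities are equalities termwise. The first inequality, $\#(\pi) \le \sum_j \#(\pi|_{C_j})$ and $\#(\pi^{-1}\sigma) \le \sum_j \#((\pi|_{C_j})^{-1}c_j)$, is where part (1) really comes from — the second of these needs Lemma~\ref{invariant} to identify $(\pi^{-1}\sigma)|_{C_j}$ correctly, and one should be slightly careful that restricting a product is a product of restrictions only because $\pi^{-1}$ and $\sigma$ both preserve the partition (which for $\pi$ is itself the content of part (1), so the argument is naturally organized as: prove (1) by a self-contained counting argument not using Lemma~\ref{invariant}, then prove (2) using Lemma~\ref{invariant} and Biane). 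Alternatively, one can invoke Lemma~\ref{transitive} to streamline the transitivity of the $\le$ relation, but the core counting is unavoidable.
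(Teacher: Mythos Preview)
Your treatment of part (ii) is essentially the paper's: once (i) is in hand, write $\sigma = c_1 \cdots c_k$, let $\pi_i$ be the product of the cycles of $\pi$ contained in $c_i$, sum the blockwise triangle inequalities $|c_i| \le |\pi_i| + |\pi_i^{-1} c_i|$, and observe that the global hypothesis forces equality in every summand, so Biane's single-cycle theorem applies. This is exactly what the paper does.

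For part (i), however, your splitting argument via $\rho$ is both more laborious than necessary and not actually closed. The step you need is a comparison between $\#(\pi^{-1}\sigma)$ and $\sum_j \#\bigl((\pi|_{C_j})^{-1} c_j\bigr)$, and as you yourself flag, identifying $(\pi^{-1}\sigma)|_{C_j}$ with $(\pi|_{C_j})^{-1} c_j$ via Lemma~\ref{invariant} already requires $\pi$ to preserve $C_j$ --- which is precisely the content of (i). You acknowledge the circularity and promise a ``self-contained counting argument'', but the only candidate you offer is the $\rho$-construction, and for that you never explain why $|\rho^{-1}\sigma|$ stands in any useful relation to $|\pi^{-1}\sigma|$; the triangle inequality alone goes the wrong way. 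So as written there is a genuine gap in your proof of (i).

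The paper's proof of (i) avoids all of this with two lines, using exactly the lemma you relegate to an afterthought. If $a$ and $b$ lie in the same cycle of $\pi$, then the transposition $(a,b)$ satisfies $|(a,b)| + |(a,b)^{-1}\pi| = |\pi|$; Lemma~\ref{transitive} then gives $|(a,b)| + |(a,b)^{-1}\sigma| = |\sigma|$, which forces $a$ and $b$ into the same cycle of $\sigma$. So Lemma~\ref{transitive} is not merely a way to ``streamline the transitivity'' --- it is the whole argument for (i), and the counting you call unavoidable is in fact entirely avoidable.
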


\begin{proof}
First let us  show ({\it i}\/). Suppose that $a$ and $b$ are in the
same cycle of $\pi$. We must show that they are in the same cycle
of $\sigma$. If $a$ and $b$ are in the same cycle of $\pi$ then
writing the transposition that switches $a$ and $b$ as $(a, b)$ we
have $|(a, b)| + |(a, b)^{-1} \pi| = |\pi|$. Thus by Lemma
\ref{transitive} $|(a, b)| + |(a, b)^{-1} \sigma| = |\sigma|$;
hence $a$ and $b$ are in the same cycle of $\sigma$.

Let us show that ({\it ii}\/) then follows from \cite[Theorem
1]{b}.

Write $\sigma = c_1 c_2
\cdots c_k$ as a product of cycles and let $\pi_i$ be the product
of cycles of $\pi$ contained in the cycle $c_i$. For each $i$ we
have $|c_i| \leq |\pi_i| + |\pi_i^{-1} c_i|$. Thus
\begin{eqnarray*}
|\sigma| &=& |c_1| + \cdots + |c_k| \leq |\pi_1| + 
\cdots  + |\pi_k| \\
&&\mbox{} +
|\pi_1^{-1} c_1| + \cdots  + |\pi_k^{-1} c_k|  = |\pi| + |\pi^{-1}
\sigma| = |\sigma|
\end{eqnarray*}

Thus for each $i$ we must have equality in the inequality $|c_i|
\leq |\pi_i| + |\pi_i^{-1} c_i|$. Hence ({\it ii}\/).\end{proof}

\begin{definition}\label{fat} (\textit{i})
Let $n_1, n_2, \dots, n_r$ be positive integers and $n = n_1 + n_2
+ \cdots + n_r$. Given a permutation $\pi \in S_r$ we shall define
a permutation $\pin \in S_n$ as follows. Let $N = \{ n_1, n_1 +
n_2, \dots, n_1 + n_2 + \cdots + n_r \}$. For $i \not \in N$ let
$\pin(i) = i+1$ and $\pin(n_1 + \cdots + n_k) = n_1 + \cdots +
n_{\pi(k)-1} + 1$. 

\smallskip\noindent
(\textit{ii}) Let $\tau$ be the partition of $[r]$ in which all blocks
are singletons.
\end{definition}

This definition may be illustrated as follows. First $\tin = \{ T_1,
T_2, \dots,\ab T_r \}$ where $T_i$ is the cycle $(n_1 + \cdots +
n_{i-1} + 1, n_1 + \cdots + n_{i-1} + 2, \dots , n_1 + \cdots +
n_i )$. $\pin$ takes the last element of $T_i$ to the first
element of $T_{\pi(i)}$, elements not in $N$ are increased by 1. If
$\cV$ is a partition of $[r]$, we let $\vin = \cV \vee \tin$. 

\begin{example}
Let $n_1 = 2$, $n_2 = 3$, $n_3 = 4$ and $\pi = (1, 3)(2)$. Then
$\pin = (1, 2, 6, 7, 8, 9)(3, 4, 5)$ (see Figure 6.).
\end{example}

\begin{figure}[t]
\begin{center}\leavevmode\includegraphics{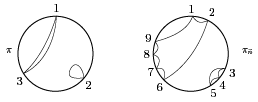}

{\small
{\bf Figure 6.} $\pi \in S_3$ and $\pin \in S_9$}
\end{center}
\end{figure}

\begin{remark}
Since $\pin$ takes the last element of  $T_i$ to the first
element of $T_{\pi(i)}$, $\pin^{-1}$ takes the first element of
$T_i$ to the last element of $T_{\pi^{-1}(i)}$. Thus
\[
\pin^{-1} \gamma_n(i) =
\begin{cases}
i & i \not\in N \\
n_1+ \cdots + n_{\pi^{-1}(j+1)} & i = n_1 + \cdots + n_j \\
\end{cases}
\]
This last case can also be written $\pin^{-1}\gamma_n (n_1 +
\cdots + n_j) = n_1 + \cdots + n_{\pi^{-1}\gamma_r(j)}$. 
\end{remark}

\begin{lemma}\label{fatnc}
Let $\psi: [r] \rightarrow [n]$ be defined by $\psi(i) = n_1 +
\cdots + n_i$. Then
\begin{enumerate}
\item $\psi \pi^{-1} \gamma_r = \pin^{-1}\gamma_n \psi$
\item $\pi \in NC(r)$ if and only if $\pin \in NC(n)$
\end{enumerate}
\end{lemma}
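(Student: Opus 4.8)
The plan is to treat the two assertions separately. Part (i) is little more than a reformulation of the Remark preceding the lemma, and part (ii) will then follow from it together with the characterization of non-crossing permutations by the cycle-count equation recalled in the Notation. For (i) I would evaluate both maps at an arbitrary $i \in [r]$. Since $\psi(i) = n_1 + \cdots + n_i$ is the $i$-th element of $N$, the Remark applies and gives
\[
\pin^{-1}\gamma_n(\psi(i)) = \pin^{-1}\gamma_n(n_1 + \cdots + n_i) = n_1 + \cdots + n_{\pi^{-1}\gamma_r(i)} = \psi(\pi^{-1}\gamma_r(i)),
\]
which is exactly the asserted identity $\psi\pi^{-1}\gamma_r = \pin^{-1}\gamma_n\psi$.

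For (ii) I would compute the three cycle counts appearing in the criterion $\#(\sigma) + \#(\sigma^{-1}\gamma_m) + \#(\gamma_m) = m + 2$ once with $\sigma = \pin$, $m = n$, and compare with the same criterion for $\sigma = \pi$, $m = r$. First, $\#(\pin) = \#(\pi)$: by Definition \ref{fat}, $\pin$ sends every non-maximal element of the block $T_k$ to its successor in $T_k$ and the last element of $T_k$ to the first element of $T_{\pi(k)}$, so if $(k_1, \dots, k_m)$ is a cycle of $\pi$ then the concatenation of the sequences $T_{k_1}, T_{k_2}, \dots, T_{k_m}$ is a single cycle of $\pin$, and every cycle of $\pin$ arises this way; hence the cycles of $\pin$ and of $\pi$ are in bijection. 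Second, by the Remark $\pin^{-1}\gamma_n$ fixes each of the $n - r$ points of $[n]\setminus N$ and leaves $N$ invariant, acting on $N$ by $n_1 + \cdots + n_j \mapsto n_1 + \cdots + n_{\pi^{-1}\gamma_r(j)}$; equivalently, by (i), $\psi$ conjugates $\pi^{-1}\gamma_r$ on $[r]$ to the restriction of $\pin^{-1}\gamma_n$ to $N$. Therefore $\#(\pin^{-1}\gamma_n) = (n - r) + \#(\pi^{-1}\gamma_r)$. Since also $\#(\gamma_n) = \#(\gamma_r) = 1$, we get
\[
\#(\pin) + \#(\pin^{-1}\gamma_n) + \#(\gamma_n) = \big(\#(\pi) + \#(\pi^{-1}\gamma_r) + \#(\gamma_r)\big) + (n - r),
\]
so the left-hand side equals $n + 2$ exactly when the bracketed sum equals $r + 2$. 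By the cycle-count characterization of $NC$ this is precisely the statement that $\pin \in NC(n)$ if and only if $\pi \in NC(r)$.

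The only step needing a little care is the cycle bijection in the first identity — one must verify that the fattening process neither loses nor creates cycles — but this is immediate bookkeeping from Definition \ref{fat}, so I anticipate no real obstacle; everything else is a direct consequence of the Remark and the geodesic criterion.
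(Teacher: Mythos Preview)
Your proof is correct and follows essentially the same route as the paper's own proof: part (i) is read off directly from the preceding Remark, and part (ii) is deduced by comparing the three cycle counts in the geodesic criterion, using $\#(\pin)=\#(\pi)$ and $\#(\pin^{-1}\gamma_n)=\#(\pi^{-1}\gamma_r)+(n-r)$. If anything, you give more detail than the paper does, in particular your explicit justification of the cycle bijection $\#(\pin)=\#(\pi)$, which the paper simply asserts.
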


\begin{proof}
Part (\textit{i}) follows from the previous remark. $\#(\pi) =
\#(\pin)$ and $\#(\pin^{-1} \gamma_n) = \#(\pi^{-1}\gamma_r) +
n-r$, by part ({\it i}\/). Thus
\[
\#(\pin) + \#(\pin^{-1}\gamma_n) + \#(\gamma_n) 
=
\#(\pi) + \#(\pin^{-1}\gamma_r) + \#(\gamma_r) + n - r
\]
Hence $\#(\pin) + \#(\pin^{-1}\gamma_n) + \#(\gamma_n) = n+2$ if
and only if $\#(\pi) + \#(\pi^{-1}\gamma_r) + \#(\gamma_r) =
r + 2$
\end{proof}

\begin{lemma}\label{tracial-inequality}
Let $\tau, \sigma, \gamma \in NC(n)$ with $\tau, \sigma \leq
\gamma$. Then $\tau \leq \sigma^{-1} \gamma$ if and only if
$\sigma \leq \gamma \tau^{-1}$.
\end{lemma}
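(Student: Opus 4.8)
The statement is symmetric in a suitable sense, so it suffices to prove one implication; the converse follows by interchanging the roles of $\tau$ and $\sigma$ together with the substitution $\gamma \mapsto \gamma$ (note that $\tau \le \gamma$ and $\sigma \le \gamma$ are preserved, and one checks that $\sigma \le \gamma\tau^{-1}$ and $\tau \le \sigma^{-1}\gamma$ are exactly the two conditions, so I only need ``$\Rightarrow$''). The plan is to convert each side into an additive statement about the length function $|\cdot|$ using Biane's characterization recalled in the Notation block: for non-crossing permutations below a common $\gamma$, the order relation $\alpha \le \beta$ is equivalent to $|\alpha| + |\alpha^{-1}\beta| = |\beta|$. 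So I want to show that $|\tau| + |\tau^{-1}\sigma^{-1}\gamma| = |\sigma^{-1}\gamma|$ is equivalent to $|\sigma| + |\sigma^{-1}\gamma\tau^{-1}| = |\gamma\tau^{-1}|$.

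First I would record the two hypotheses in additive form: $|\tau| + |\tau^{-1}\gamma| = |\gamma|$ and $|\sigma| + |\sigma^{-1}\gamma| = |\gamma|$. Then I would manipulate the target identities using the general fact that $|\cdot|$ is a word-length on the symmetric group, so it is conjugation-invariant and satisfies $|\alpha| = |\alpha^{-1}|$ and the triangle inequality $|\alpha\beta| \le |\alpha| + |\beta|$. Conjugation invariance lets me rewrite $|\sigma^{-1}\gamma\tau^{-1}| = |\tau^{-1}\sigma^{-1}\gamma|$ (conjugating by $\tau$, moving $\tau^{-1}$ from the right to the left) and $|\gamma\tau^{-1}| = |\tau^{-1}\gamma|$. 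After this rewriting, both target identities involve the same element $\tau^{-1}\sigma^{-1}\gamma$, and what remains is the numerical claim that
\[
|\tau| + |\tau^{-1}\sigma^{-1}\gamma| = |\sigma^{-1}\gamma|
\quad\Longleftrightarrow\quad
|\sigma| + |\tau^{-1}\sigma^{-1}\gamma| = |\tau^{-1}\gamma|.
\]
To get this I would use the hypotheses to substitute $|\sigma^{-1}\gamma| = |\gamma| - |\sigma|$ and $|\tau^{-1}\gamma| = |\gamma| - |\tau|$; then both sides of the equivalence collapse to the single equation $|\tau| + |\sigma| + |\tau^{-1}\sigma^{-1}\gamma| = |\gamma|$, so the equivalence is immediate — indeed both conditions are equivalent to the statement that $e \le \tau \le \tau\sigma \le \gamma$ is a geodesic (equivalently $|\tau\sigma| + |(\tau\sigma)^{-1}\gamma| = |\gamma|$ combined with $|\tau| + |\sigma| = |\tau\sigma|$), using Lemma \ref{transitive} to chain the geodesic relations.

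The main obstacle I anticipate is bookkeeping: making sure each conjugation is legitimate (one must conjugate by an element of $S_n$, which is always fine since $|\cdot|$ is a genuine conjugacy-class invariant) and that no inequality is being used in the wrong direction — the triangle inequality gives ``$\le$'' for free, and the content is that the hypotheses force equality, so every ``$\le$'' that appears must be shown to be saturated. A clean way to avoid sign errors is to phrase everything through the geodesic picture $e \le \tau \le \tau\sigma \le \gamma$ in the Cayley graph and invoke Lemma \ref{transitive} once, rather than juggling four separate length identities; I would organize the write-up that way.
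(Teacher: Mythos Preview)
Your proposal is correct and follows essentially the same route as the paper: translate both order relations into length identities via Biane's characterization, use conjugation invariance to identify $|\sigma^{-1}\gamma\tau^{-1}| = |\tau^{-1}\sigma^{-1}\gamma|$ and $|\gamma\tau^{-1}| = |\tau^{-1}\gamma|$, and substitute the hypotheses $|\sigma^{-1}\gamma| = |\gamma| - |\sigma|$ and $|\tau^{-1}\gamma| = |\gamma| - |\tau|$. The paper carries out the two implications separately by direct computation, whereas you observe (slightly more cleanly) that both conditions collapse to the single symmetric equation $|\tau| + |\sigma| + |\tau^{-1}\sigma^{-1}\gamma| = |\gamma|$; this is a cosmetic improvement, not a different argument, and your appeal to Lemma~\ref{transitive} is optional since the algebraic reduction already suffices.
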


\begin{proof}
If $\tau \leq \sigma^{-1} \gamma$ then $|\tau| + |\tau^{-1}
\sigma^{-1} \gamma| = |\sigma^{-1} \gamma|$. Since $\sigma
\leq \gamma$ we also have $|\sigma| + |\sigma^{-1}\gamma| =
|\gamma|$. Thus
\begin{eqnarray*}\lefteqn{
|\sigma| + |\sigma^{-1} \gamma \tau^{-1}| 
=
|\gamma| - |\sigma^{-1} \gamma| 
+ |\tau^{-1} \sigma^{-1} \gamma| }\\ 
&=&
|\gamma| - |\sigma^{-1} \gamma| + |\sigma^{-1} \gamma| - 
|\tau| \\
&=&
|\gamma| - |\tau| = |\gamma \tau^{-1}|
\end{eqnarray*}
and therefore $\sigma \leq \gamma \tau^{-1}$.

If $\sigma \leq \gamma \tau^{-1}$ then $|\sigma| + |\sigma^{-1}
\gamma\tau^{-1}| = |\gamma \tau^{-1}|$; also $|\tau| + |\tau^{-1}
\gamma| = |\gamma|$. Thus
\begin{eqnarray*}\lefteqn{
|\tau| + |\tau^{-1} \sigma^{-1} \gamma| = |\gamma| - 
|\gamma \tau^{-1}| + |\sigma^{-1} \gamma \tau^{-1}| }\\
&=&
|\gamma| - |\gamma \tau^{-1}| + |\gamma \tau^{-1}| -|\sigma| \\
&=&
|\sigma^{-1} \gamma| = |\sigma^{-1} \gamma|
\end{eqnarray*}
which shows that $\tau \leq \sigma^{-1} \gamma$. 
\end{proof}

\begin{lemma}\label{first-sep}
For $\sigma \in NC(n)$, $\sigma \vee \tin = \gamma_n$ if and only
if $\sigma^{-1} \gamma_n$ separates the points of $N$
\end{lemma}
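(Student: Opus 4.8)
The plan is to translate the lattice-theoretic condition $\sigma \vee \tin = \gamma_n$ into the separation condition via the ``fattening'' correspondence $\pi \mapsto \pin$ of Definition \ref{fat} together with Lemma \ref{fatnc} and Lemma \ref{tracial-inequality}. The key observation is that $\tin$ is itself of the form $\pin$ for the \emph{identity} permutation $e \in S_r$, so the blocks of $\tin$ are exactly the intervals $T_1, \dots, T_r$. First I would reduce to the ``one-block'' picture: since $\sigma \in NC(n)$ and $\tin \leq \gamma_n$, we have $\sigma \vee \tin \leq \gamma_n$, and $\sigma \vee \tin = \gamma_n$ is equivalent (using that $NC(n)$ is a lattice and the relation $\leq$ is detected by the additivity of $|\cdot|$, as recalled in the Notation) to the assertion that no non-crossing permutation strictly below $\gamma_n$ dominates both $\sigma$ and $\tin$. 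Equivalently, viewing everything inside $NC(n)$, one wants: the only cycle of $\sigma \vee \tin$ is all of $[n]$.

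The heart of the argument is the following chain of equivalences, each step of which I would justify by one of the earlier lemmas. By Lemma \ref{tracial-inequality} (with $\gamma = \gamma_n$, $\tau = \tin$), the condition $\tin \leq \sigma^{-1}\gamma_n$ is equivalent to $\sigma \leq \gamma_n \tin^{-1}$; and dually $\sigma \leq (\tin)^{-1}\gamma_n$ is equivalent to $\tin \leq \gamma_n \sigma^{-1}$. Now $\sigma \vee \tin = \gamma_n$ means precisely that no proper ``complement'' exists, i.e.\ that $\tin \not\leq \tau$ for any $\tau \in NC(n)$ with $\sigma \leq \tau < \gamma_n$; by Kreweras-complementation in $NC(n)$ (which is exactly what Lemma \ref{tracial-inequality} encodes: $\tau \mapsto \gamma_n \tau^{-1}$ restricted to $[\sigma, \gamma_n]$ is an anti-isomorphism onto $[e, \sigma^{-1}\gamma_n]$), this says $\tin \wedge (\sigma^{-1}\gamma_n) = e$ is false for any proper lower bound — more carefully, $\sigma \vee \tin = \gamma_n$ iff $\tin \vee \sigma$ has a single block iff the complement $\sigma^{-1}\gamma_n$ shares no nontrivial block structure forced by $\tin$. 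The concrete upshot, which I would verify directly, is: $\sigma \vee \tin = \gamma_n$ iff every block of $\sigma^{-1}\gamma_n$ meets at most one of the points of $N = \{n_1, n_1+n_2, \dots, n\}$, i.e.\ iff $\sigma^{-1}\gamma_n$ separates the points of $N$.

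For the forward direction, suppose $\sigma^{-1}\gamma_n$ does \emph{not} separate $N$, so two points $n_1+\cdots+n_i$ and $n_1+\cdots+n_j$ lie in a common cycle of $\sigma^{-1}\gamma_n$. I would produce a permutation $\tau$ with $\sigma \leq \tau$, $\tin \leq \tau$, and $\tau < \gamma_n$, contradicting $\sigma \vee \tin = \gamma_n$: take $\tau$ to be the join in $NC(n)$ of $\sigma$ with the interval partition that merges the blocks $T_i$ and $T_j$ of $\tin$ only — one checks via Lemma \ref{metric-order} and the additivity $|\sigma| + |\sigma^{-1}\gamma_n| = |\gamma_n|$ that this $\tau$ still satisfies $|\tau| + |\tau^{-1}\gamma_n| = |\gamma_n|$ with $|\tau^{-1}\gamma_n| > 0$, so $\tau \neq \gamma_n$. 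Conversely, if $\sigma^{-1}\gamma_n$ separates the points of $N$, then any $\tau \in NC(n)$ with $\sigma, \tin \leq \tau$ must, on one hand, have its complement $\gamma_n\tau^{-1}$ below $\gamma_n\sigma^{-1}$ (hence a sub-partition of $\sigma^{-1}\gamma_n$ up to the relevant anti-isomorphism), and on the other hand satisfy $\tin \leq \tau$, which forces $\gamma_n \tau^{-1}$ to respect the $T_i$; since $\sigma^{-1}\gamma_n$ separates $N$, the only partition below it that is compatible with the $T_i$-block structure in this way is the one whose complement is trivial, forcing $\tau = \gamma_n$.

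The main obstacle I anticipate is making the ``Kreweras complement compatible with $\tin$'' step fully rigorous at the level of permutations rather than partitions: one must be careful that $\sigma^{-1}\gamma_n$ need not leave $N$ invariant, so ``separates the points of $N$'' is a statement about cycles, not about a restricted permutation, and the anti-isomorphism of Lemma \ref{tracial-inequality} has to be applied on the interval $[\sigma,\gamma_n]$ rather than on all of $NC(n)$. I would handle this by working consistently with the metric characterizations (the additivity of $|\cdot|$) and invoking Lemma \ref{metric-order} to pass between ``cycle of $\sigma^{-1}\gamma_n$'' and ``block of the induced non-crossing partition,'' so that the combinatorics of which $T_i$'s get merged is controlled entirely by which points of $N$ are co-cyclic in $\sigma^{-1}\gamma_n$.
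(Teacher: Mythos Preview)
Your overall strategy --- use Lemma~\ref{tracial-inequality} (Kreweras complementation) to pass between ``$\sigma \leq \cdot$'' and ``$\cdot \leq \sigma^{-1}\gamma_n$'' --- is exactly the mechanism the paper uses. But there is a genuine gap in your forward direction. You propose, given $r = n_1+\cdots+n_i$ and $s = n_1+\cdots+n_j$ in the same cycle of $\sigma^{-1}\gamma_n$, to set $\tau$ equal to the join in $NC(n)$ of $\sigma$ with the partition obtained from $\tin$ by merging $T_i$ and $T_j$, and then to check $\tau < \gamma_n$. This $\tau$ can equal $\gamma_n$. Take $n_1 = n_2 = n_3 = 1$ (so $N = \{1,2,3\}$, $\tin = 0_3$) and $\sigma = (1,2)(3)$. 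Then $\sigma^{-1}\gamma_3 = (1)(2,3)$ fails to separate $N$, with $r=2$, $s=3$. Your merged partition is $\{ \{1\}, \{2,3\} \}$, and its join with $\sigma = \{ \{1,2\}, \{3\} \}$ is $1_3 = \gamma_3$. So your witness collapses, and the metric check you sketch (``$|\tau^{-1}\gamma_n| > 0$'') is exactly what fails. The problem is that building $\tau$ from below gives no control over how large the join becomes once $\sigma$ is thrown in.

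The paper's remedy is to work from above rather than from below, and the device that makes this clean is the transposition $(r,s)$. One first notes that $\tin^{-1}\gamma_n$ has all of $N$ in a single cycle, so $(r,s) \leq \tin^{-1}\gamma_n$ for every $r\neq s$ in $N$; by Lemma~\ref{tracial-inequality} this gives $\tin \leq \gamma_n(r,s)$. If now $r,s$ lie in the same cycle of $\sigma^{-1}\gamma_n$, then $(r,s) \leq \sigma^{-1}\gamma_n$, and Lemma~\ref{tracial-inequality} again gives $\sigma \leq \gamma_n(r,s)$. Thus $\sigma \vee \tin \leq \gamma_n(r,s)$, and $\gamma_n(r,s)$ is a two-block partition, hence strictly below $\gamma_n$. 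For the converse the same device runs backwards: if $\sigma\vee\tin < \gamma_n$, then since its blocks are unions of $T_i$'s it sits below some $\gamma_n(r,s)$ with $r,s\in N$, whence $(r,s)\leq \sigma^{-1}\gamma_n$ and separation fails. Your converse sketch via the anti-isomorphism is heading in this direction but is not yet an argument; the transposition trick is what makes both halves a two-line computation.
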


\begin{proof}
We have $\tin^{-1} \gamma_n (n_1 + \cdots + n_k) = n_1 + \cdots +
n_{k+1}$ and thus for $r \not= s \in N$, $(r, s) \leq \tin^{-1}
\gamma_n$ and thus by the lemma above, $\tin \leq \gamma_n(r,
s)$. Hence, if $r \not= s \in N$ are in the same cycle of
$\sigma^{-1} \gamma_n$ then by the lemma above $\sigma \leq
\gamma_n (r, s)$ and hence $\sigma \vee \tin \leq \gamma_n (r,
s)$.

Conversely if $\sigma \vee \tin < \gamma_n$ then there must be $r
\not= s \in N$ such that $\sigma \vee \tin \leq \gamma_n (r, s)$,
as partitions of the form $\gamma_n (r, s)$ with $r \not= s\in N$
are the most general partitions with two blocks, each of which is
a union of $T_i$'s. In this case $(r, s) \leq \sigma^{-1} \gamma_n$
and $\sigma^{-1}
\gamma_n$ fails to separate the points of $N$. 
\end{proof}

\begin{theorem}\label{separates}
For $\sigma \in NC(n)$ with $\sigma \leq \pin$ we have that
$\sigma \vee \tin = \pin$ if and only if $\sigma^{-1}\pin$
separates the points of $N$.
\end{theorem}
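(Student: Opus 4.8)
The plan is to reduce the statement, one cycle of $\pin$ at a time, to Lemma~\ref{first-sep}. Throughout I assume $\pin\in NC(n)$ --- equivalently $\pi\in NC(r)$, by Lemma~\ref{fatnc} --- which is the only case in which $\sigma\vee\tin$ and $\pin$ are comparable in $NC(n)$, and is what occurs in our application.

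\emph{First I record two facts about $\tin$ and $\pin$.} By Definition~\ref{fat} the permutation $\pin$ increments the points of each $T_i$ and carries the last point of $T_i$ to the first point of $T_{\pi(i)}$; hence each block $T_i$ of $\tin$ lies in a single cycle of $\pin$, and a count of cycles gives $|\tin|+|\tin^{-1}\pin|=(n-r)+(r-\#(\pi))=|\pin|$, so $\tin\le\pin$. The same computation as in the remark preceding Lemma~\ref{fatnc} (which there produced $\pin^{-1}\gamma_n$) shows that $\tin^{-1}\pin$ fixes $[n]\setminus N$ and carries $n_1+\cdots+n_j$ to $n_1+\cdots+n_{\pi(j)}$. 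In particular, for $r\ne s\in N$ one has $(r,s)\le\tin^{-1}\pin$ precisely when $r$ and $s$ lie in a common cycle of $\pin$, i.e. precisely when $(r,s)\le\pin$.

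\emph{Next I localize to the cycles of $\pin$.} Since $\sigma,\tin\le\pin\in NC(n)$, the partitions $\sigma$, $\tin$, $\sigma\vee\tin$ all lie in the interval $[0_n,\pin]$ of $NC(n)$; this interval is the product, over the cycles $c$ of $\pin$, of the non-crossing partition lattices $NC(c)$ --- the points of $c$ carrying the cyclic order in which $\pin$ visits them, which agrees with their natural order because $\pin\le\gamma_n$ --- with joins computed coordinatewise (see \cite{b}, \cite{ns}). Hence $\sigma\vee\tin=\pin$ if and only if $\sigma|_c\vee\tin|_c=\pin|_c$ for every cycle $c$ of $\pin$. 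Moreover $\sigma$ leaves each cycle $c$ of $\pin$ invariant, so $(\sigma^{-1}\pin)|_c=(\sigma|_c)^{-1}(\pin|_c)$ and the cycles of $\sigma^{-1}\pin$ are distributed among those of $\pin$; thus $\sigma^{-1}\pin$ separates $N$ if and only if $(\sigma|_c)^{-1}(\pin|_c)$ separates $N\cap c$ for every $c$. Now fix a cycle $c=T_{i_1}\cup\cdots\cup T_{i_k}$ of $\pin$, with $(i_1,\dots,i_k)$ the corresponding cycle of $\pi$. In the cyclic order in which $\pin$ runs through $c$, the blocks $T_{i_1},\dots,T_{i_k}$ of $\tin|_c$ are consecutive arcs, and the last point of each is a point of $N\cap c$; together these are all of $N\cap c$. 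So, after relabelling $c$ as $[\,|c|\,]$, the quadruple $(\pin|_c,\tin|_c,\sigma|_c,N\cap c)$ is an instance of the quadruple $(\gamma_n,\tin,\sigma,N)$ to which Lemma~\ref{first-sep} applies, giving $\sigma|_c\vee\tin|_c=\pin|_c$ iff $(\sigma|_c)^{-1}(\pin|_c)$ separates $N\cap c$. Combined with the two equivalences above this proves the theorem.

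A more self-contained variant mimics the proof of Lemma~\ref{first-sep} directly, using Lemma~\ref{tracial-inequality} with $\gamma=\pin$ (note that $\pin(r,s)\in NC(n)$ and $\pin(r,s)<\pin$ whenever $r\ne s\in N$ lie in a common cycle of $\pin$, the first point following from Lemma~\ref{transitive}): if $r\ne s\in N$ are not separated by $\sigma^{-1}\pin$ then $(r,s)\le\sigma^{-1}\pin$ and, by the first paragraph, also $(r,s)\le\tin^{-1}\pin$, so $\sigma,\tin\le\pin(r,s)$ and $\sigma\vee\tin\ne\pin$; conversely one must produce, from $\sigma\vee\tin\le\pin$ with $\sigma\vee\tin\ne\pin$, a pair $r\ne s\in N$ with $\sigma\vee\tin\le\pin(r,s)$. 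I expect this converse to be the \textbf{main obstacle}: it amounts, inside a single cycle of $\pin$, to the assertion that a non-crossing partition of a cyclically ordered set with at least two blocks lies below the ``two complementary arcs'' partition cut out by some pair of points --- exactly the combinatorial fact used for $\gamma_n$ in Lemma~\ref{first-sep}, and the point that the per-cycle reduction of the previous paragraph is designed to dispose of cleanly.
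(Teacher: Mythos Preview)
Your argument is correct and follows essentially the same route as the paper: decompose along the cycles $\tilde c_i$ of $\pin$, apply Lemma~\ref{first-sep} on each cycle, and reassemble. The paper states this reduction more tersely (writing $\sigma_i,\tau_i$ for the restrictions and invoking Lemma~\ref{first-sep} directly), whereas you spell out why the interval $[0_n,\pin]$ factors and why the relabelled cycle matches the hypotheses of Lemma~\ref{first-sep}; the closing paragraph on the alternative variant is extraneous.
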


\begin{proof}
Let the cycle decomposition of $\pi$ be $c_1 \, c_2 \cdots c_k$.
Then the cycle decomposition of $\pin$ is $\tilde c_1 \, \tilde
c_2 \cdots \tilde c_k$ where $\tilde c_i = (c_i)_{\vec n}$. Let
$\sigma_i$ be the restriction of $\sigma$ to the invariant subset
consisting of the points of the cycle $\tilde c_i$; and similarly
let $\tau_i$ be the restriction of $\tin$ to the same invariant
subset. Thus $\sigma \vee \tin = \pin$ if and only if for each $i$
we have $\sigma_i \vee \tau_i = \tilde c_i$, and by the previous
lemma we have that $\sigma_i \vee \tau_i = \tilde c_i$ for each
$i$ if and only if for each $i$, $\sigma_i^{-1} \tilde c_i$
separates the points of $N \cap \tilde c_i$. Since for each $i$,
$\pin$, $\sigma$, and $\tin$ leave $\tilde c_i$ invariant, we have
that $\sigma^{-1} \pin$ separates the points of $N \cap \tilde
c_i$ for each $i$ if and only if $\sigma^{-1} \pin$ separates the
points of $N$.
\end{proof}

\begin{remark}
The condition that $\sigma^{-1} \gamma_n$ separates the points of
$N$ can be rephrased as saying that any two points in the same
cycle of $\tin^{-1}\gamma_n$ must be in different cycles of
$\sigma^{-1} \pin$. 
\end{remark}

\begin{definition}\label{order1}
Let $\pi, \sigma \in S_{NC}(p, q)$. Suppose that each cycle of
$\pi$ is contained in some cycle of $\sigma$ and for each cycle
$c$ of $\sigma$ the enclosed cycles of $\pi$ form a non-crossing
partition of $c$. Then we write $\pi \leq \sigma$. By Lemma
\ref{metric-order} this relation
extends to $S_{NC}(p, q)$ the usual partial order on $NC(n)$ given
by inclusion of blocks. 
\end{definition}

\begin{remark} 
We can rephrase the definition above using our metric $d$. Write
the cycles of $\pi$ as $c_1\, c_2\, \cdots\, c_l$ and denote the
restriction of $\pi$ to $c_i$ by $\pi_i$. Then we have
\[
\#(\pi_i) + \#(\pi_i^{-1} c_i) + \#(c_i) = k_i +2
\]
where $k_i$ is the length of the cycle $c_k$. If we sum over $i$
we get
\[
\#(\pi) + \#(\pi^{-1} \sigma) + \#(\sigma) = p + q + 2 l
\]
which can be rewritten $|\pi| + |\pi^{-1} \sigma| = |\sigma|$
 (c.f. \cite[Definition 4.9]{cmss}). That the relation $\leq$
is transitive follows from \cite[Proposition 4.10]{cmss}, or in
this simple case can be checked directly.

Moreover, we can extend the Definition \ref{order1} as follows. 
Suppose that $\pi \in S_{p+q}$, $\sigma \in S_{NC}(p,
q)$ and each cycle of $\pi$ is contained in a cycle of $\sigma$
and for each cycle of $\sigma$ the enclosed cycles of $\pi$ form a
non-crossing permutation of this cycle of $\sigma$. By
\cite[Equation 5.2]{mn} there are integers $u$ and $v$ such that
$\tilde \sigma = \gamma_p^u \gamma_q^v \sigma \gamma_q^{-v}
\gamma_p^{-u} \in NC(p+q)$. Let $\tilde\pi = \gamma_p^u \gamma_q^v
\pi \gamma_q^{-v} \gamma_p^{-u}$. Then each cycle of $\tilde\pi$
is contained in a cycle of $\tilde\sigma$ and for each cycle of
$\tilde \sigma$ the enclosed cycles of $\tilde \pi$ form a
non-crossing partition. Thus $\tilde\pi$ is non-crossing and
$\tilde\pi \leq \tilde \sigma$. Hence $\pi \in (NC(p) \times
NC(q)) \cup S_{NC}(p, q)$. This discussion  shows that the partial
order $\leq$ could also have been defined as
\begin{enumerate}
\item each cycle of $\pi$ is contained in some cycle of $\sigma$;
and 
\item whenever we have integers $u$ and $v$ such that $\gamma_p^u
\gamma_q^v \sigma \gamma_q^{-v} \gamma_p^{-u} \ab \in NC(p+q)$ we
also have $\gamma_p^u \gamma_q^v \pi \gamma_q^{-v} \gamma_p^{-u} \in
NC(p+q)$.
\end{enumerate}
\end{remark}

\begin{lemma}\label{annular-order}
Suppose $\pi, \sigma \in S_{NC}(p, q)$ with $\pi \leq \sigma^{-1}
\gamma_{p,q}$. Then $\sigma \leq \gamma_{p,q} \pi^{-1}$.
\end{lemma}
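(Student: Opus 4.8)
The plan is to run, in the annular setting, the same length calculation that proves Lemma~\ref{tracial-inequality}, carrying along the annular defect $+2$ wherever it enters. First I would collect the three length identities at my disposal. The hypothesis $\pi\le\sigma^{-1}\gamma_{p,q}$ means that each cycle of $\pi$ sits inside a cycle of $\sigma^{-1}\gamma_{p,q}$ and that the enclosed cycles of $\pi$ form a non-crossing permutation of each such cycle; by the summation argument in the Remark following Definition~\ref{order1} (which uses only this, and not that $\sigma^{-1}\gamma_{p,q}$ is itself globally non-crossing in either of the two senses) this is equivalent to
\[
|\pi|+|\pi^{-1}\sigma^{-1}\gamma_{p,q}|=|\sigma^{-1}\gamma_{p,q}| .
\]
Since $\sigma\in S_{NC}(p,q)$ we have $|\sigma|+|\sigma^{-1}\gamma_{p,q}|=|\gamma_{p,q}|+2$, and since $\pi\in S_{NC}(p,q)$ we have $|\pi|+|\pi^{-1}\gamma_{p,q}|=|\gamma_{p,q}|+2$.

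Next, using the conjugation invariance $|xy|=|yx|$ together with $|x|=|x^{-1}|$, I would rewrite $|\sigma^{-1}\gamma_{p,q}\pi^{-1}|=|\pi^{-1}\sigma^{-1}\gamma_{p,q}|$ and $|\gamma_{p,q}\pi^{-1}|=|\pi^{-1}\gamma_{p,q}|$, and then substitute the three identities in turn:
\begin{eqnarray*}
|\sigma|+|\sigma^{-1}\gamma_{p,q}\pi^{-1}|
&=& |\sigma|+|\pi^{-1}\sigma^{-1}\gamma_{p,q}| \\
&=& |\sigma|+|\sigma^{-1}\gamma_{p,q}|-|\pi| \\
&=& (|\gamma_{p,q}|+2)-|\pi| \\
&=& |\pi^{-1}\gamma_{p,q}| \;=\; |\gamma_{p,q}\pi^{-1}| .
\end{eqnarray*}
This gives $|\sigma|+|\sigma^{-1}(\gamma_{p,q}\pi^{-1})|=|\gamma_{p,q}\pi^{-1}|$, and I would then apply Lemma~\ref{metric-order} to this equality (with $\gamma_{p,q}\pi^{-1}$ playing the role of $\sigma$ there): it yields that every cycle of $\sigma$ is contained in a cycle of $\gamma_{p,q}\pi^{-1}$ and that the enclosed cycles of $\sigma$ form a non-crossing partition of each cycle of $\gamma_{p,q}\pi^{-1}$, which is precisely the assertion $\sigma\le\gamma_{p,q}\pi^{-1}$ in the sense of Definition~\ref{order1}.

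The computation is mechanical, so the only real point of care is the two-way passage between the combinatorial order ``$\le$'' on annular permutations and the length identities. In one direction I need that ``$\pi\le\tau$'' always forces $|\pi|+|\pi^{-1}\tau|=|\tau|$ even when $\tau$ (here $\sigma^{-1}\gamma_{p,q}$, and afterwards $\gamma_{p,q}\pi^{-1}$) is not itself non-crossing --- this is exactly the summation identity in the Remark after Definition~\ref{order1}; in the other direction I need the converse passage from the length identity back to the order, which is Lemma~\ref{metric-order}. Once those two translations are pinned down, the statement is the exact annular analogue of the first implication of Lemma~\ref{tracial-inequality}, with the role of $|\gamma|$ taken over by $|\gamma_{p,q}|+2$.
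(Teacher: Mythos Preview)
Your proof is correct and is essentially the paper's own argument: the paper uses exactly the same three length identities and the same chain of substitutions to arrive at $|\sigma|+|\sigma^{-1}\gamma_{p,q}\pi^{-1}|=|\gamma_{p,q}\pi^{-1}|$. Your version is slightly more explicit in invoking the Remark after Definition~\ref{order1} and Lemma~\ref{metric-order} to pass back and forth between the order relation and the metric identity, whereas the paper leaves those translations implicit, but the substance is identical.
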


\begin{proof}
Since $\pi, \sigma \in S_{NC}(p, q)$ we have $|\pi | + |\pi^{-1}
\gamma_{p,q}| = |\gamma_{p,q}| + 2$ and $|\sigma| + |\sigma^{-1}
\gamma_{p,q}| = |\gamma_{p,q}| + 2$. By hypothesis we have $|\pi| +
|\pi^{-1} \sigma^{-1} \gamma_{p,q}| = |\sigma^{-1} \gamma_{p,q}|$. 
Thus 
\begin{eqnarray*}
|\sigma| + |\sigma^{-1} \gamma_{p,q} \pi^{-1}| 
&=&
|\gamma_{p,q}| + 2 - |\sigma^{-1} \gamma_{p,q}| +
|\sigma^{-1} \gamma_{p, q}| - |\pi| \\
&=&
|\gamma_{p,q}| + 2 - |\pi| = |\pi^{-1} \gamma_{p, q}| = |\gamma_{p,
q} \pi^{-1}|
\end{eqnarray*}
\end{proof}

In \cite[Definition 4.9]{cmss} we defined a product for partitioned 
permutations which we shall recall here. If $\cV$ is a partition of
$[n]$ then $|\cV| = n - \#(\cV)$ where $\#(\cV)$ is the number of blocks
of
$\cV$. If $(\cV, \pi)$ is a partitioned permutation then $|(\cV, \pi)| =
2 |\cV| - |\pi|$.  If $(\cV, \pi)$ and $(\cU,
\sigma)$ are partitioned permutations then they have a product if $|(\cV,
\pi)| + |(\cU, \sigma)| = |(\cV \vee \cU, \pi\sigma)|$ and we write this
as $(\cV, \pi) \cdot (\cU, \sigma) = (\cV \vee \cU, \pi\sigma)$;
otherwise the product is not defined. 

If $\sigma \in S_{p+q}$ and $\sigma$ has a cycle that connects the two
cycles of $\gamma_{p,q}$ then $\sigma \in S_{NC}(p,q)$ if and only if
$|\sigma| + |\sigma^{-1} \gamma_{p,q}| = |\gamma_{p,q}| + 2$. In terms
of partitioned permutations this means $(0_\sigma, \sigma) \cdot
(0_{\sigma^{-1} \gamma_{p,q}}, \sigma^{-1} \gamma_{p,q}) = (1_{p+q},
\gamma_{p,q})$, as requiring $1_{p+q} = 0_\sigma \vee  0_{\sigma^{-1}
\gamma_{p,q}}$ means that $\sigma$ has a cycle that connects the two
cycles of $\gamma_{p,q}$.

\begin{lemma}\label{second-annular-order}
Suppose that $\pi \in NC(p) \times NC(q)$ and $\sigma \in S_{NC}(p,
q)$ with $\pi \leq \sigma^{-1} \gamma_{p,q}$. Then 
\[(0_\sigma,
\sigma) \cdot (0_{\sigma^{-1} \gamma_{p,q} \pi^{-1}}, \sigma^{-1}
\gamma_{p,q} \pi^{-1}) = (\sigma \vee \gamma_{p,q} \pi^{-1},
\gamma_{p,q} \pi^{-1})\]
\end{lemma}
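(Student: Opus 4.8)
The plan is to verify the defining condition for the product of the two partitioned permutations $(0_\sigma,\sigma)$ and $(0_{\sigma^{-1}\gamma_{p,q}\pi^{-1}},\, \sigma^{-1}\gamma_{p,q}\pi^{-1})$, namely that
\[
|(0_\sigma,\sigma)| + |(0_{\sigma^{-1}\gamma_{p,q}\pi^{-1}},\,\sigma^{-1}\gamma_{p,q}\pi^{-1})| = |(\,0_\sigma \vee 0_{\sigma^{-1}\gamma_{p,q}\pi^{-1}},\ \gamma_{p,q}\pi^{-1})|,
\]
and that the supremum of the two underlying partitions is exactly $\sigma \vee \gamma_{p,q}\pi^{-1}$ (viewed as the partition whose blocks are the cycles of that permutation, or rather the join of the cycle-partitions). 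Since $\sigma \in S_{NC}(p,q)$ and $\pi \in NC(p)\times NC(q)$ with $\pi \leq \sigma^{-1}\gamma_{p,q}$, Lemma \ref{annular-order} (or rather its proof, which only uses $\pi \leq \sigma^{-1}\gamma_{p,q}$) gives $|\sigma| + |\sigma^{-1}\gamma_{p,q}\pi^{-1}| = |\gamma_{p,q}\pi^{-1}|$; this is the "length additivity" at the level of permutations. Note that the argument in Lemma \ref{annular-order} computes $|\sigma^{-1}\gamma_{p,q}\pi^{-1}| = |\gamma_{p,q}|+2-|\pi| - |\sigma|$ and $|\gamma_{p,q}\pi^{-1}| = |\gamma_{p,q}| + 2 - |\pi|$ using that $\pi \in NC(p)\times NC(q)$ satisfies $|\pi| + |\pi^{-1}\gamma_{p,q}| = |\gamma_{p,q}|+2$; here the $+2$ is essential and is where the annular hypothesis enters.

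First I would record that for a partitioned permutation of the form $(0_\rho,\rho)$ we have $|(0_\rho,\rho)| = 2|0_\rho| - |\rho| = 2|\rho| - |\rho| = |\rho|$, since the partition $0_\rho$ whose blocks are the cycles of $\rho$ satisfies $|0_\rho| = |\rho|$. Hence the left-hand side of the product-length identity is simply $|\sigma| + |\sigma^{-1}\gamma_{p,q}\pi^{-1}|$, which by the computation above equals $|\gamma_{p,q}\pi^{-1}|$. Next I would identify the right-hand side: $(\sigma \vee \gamma_{p,q}\pi^{-1},\ \gamma_{p,q}\pi^{-1})$ has length $2|\sigma \vee \gamma_{p,q}\pi^{-1}| - |\gamma_{p,q}\pi^{-1}|$, and by Lemma \ref{annular-order} we have $\sigma \leq \gamma_{p,q}\pi^{-1}$, so $\sigma \vee \gamma_{p,q}\pi^{-1}$ (as a partition) equals $0_{\gamma_{p,q}\pi^{-1}}$, whose length is $|\gamma_{p,q}\pi^{-1}|$. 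Therefore the right-hand side length is $2|\gamma_{p,q}\pi^{-1}| - |\gamma_{p,q}\pi^{-1}| = |\gamma_{p,q}\pi^{-1}|$, matching the left-hand side.

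It then remains to check the partition equation $0_\sigma \vee 0_{\sigma^{-1}\gamma_{p,q}\pi^{-1}} = 0_{\gamma_{p,q}\pi^{-1}}$ (so that the product formula reads $(\sigma \vee \gamma_{p,q}\pi^{-1},\ \gamma_{p,q}\pi^{-1})$ as stated, with $\sigma \vee \gamma_{p,q}\pi^{-1}$ interpreted via the partial order on $S_{NC}(p,q)$). The inclusion $0_\sigma \vee 0_{\sigma^{-1}\gamma_{p,q}\pi^{-1}} \leq 0_{\gamma_{p,q}\pi^{-1}}$ is automatic since $\sigma(\sigma^{-1}\gamma_{p,q}\pi^{-1}) = \gamma_{p,q}\pi^{-1}$ so every cycle of $\sigma$ and of $\sigma^{-1}\gamma_{p,q}\pi^{-1}$ lies inside a cycle of $\gamma_{p,q}\pi^{-1}$. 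The reverse inclusion follows from the length count: if the join were strictly smaller it would have more blocks, forcing $|0_\sigma \vee 0_{\sigma^{-1}\gamma_{p,q}\pi^{-1}}| < |\gamma_{p,q}\pi^{-1}|$, but $|0_\sigma \vee 0_{\sigma^{-1}\gamma_{p,q}\pi^{-1}}| \geq$ (by the general inequality $|\cV \vee \cU| \le |\cV| + |\cU|$ read the other way, i.e. the product being defined forces this) exactly $|\sigma| + |\sigma^{-1}\gamma_{p,q}\pi^{-1}| = |\gamma_{p,q}\pi^{-1}|$. So equality holds throughout, which is precisely the statement that the product is defined and equals $(\sigma \vee \gamma_{p,q}\pi^{-1},\ \gamma_{p,q}\pi^{-1})$.

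The main obstacle I anticipate is bookkeeping the "$+2$" in the annular geodesic condition correctly at every step, and making sure the partition-versus-permutation distinction is handled cleanly — in particular verifying that $\sigma \vee \gamma_{p,q}\pi^{-1}$ computed as a join of permutations in the sense of Definition \ref{order1} agrees with $0_\sigma \vee 0_{\sigma^{-1}\gamma_{p,q}\pi^{-1}}$ computed as a join of partitions. This hinges on $\sigma \leq \gamma_{p,q}\pi^{-1}$, which Lemma \ref{annular-order} supplies, so the distinction collapses and no separate argument is needed; but one should state this explicitly to avoid circularity, since the whole point of Lemma \ref{second-annular-order} is to package the conclusion of Lemma \ref{annular-order} into the language of the partitioned-permutation product used in \cite[\S4]{cmss}.
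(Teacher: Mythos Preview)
Your argument contains a genuine numerical error that causes the whole approach to collapse. You assert that $\pi \in NC(p)\times NC(q)$ satisfies $|\pi| + |\pi^{-1}\gamma_{p,q}| = |\gamma_{p,q}| + 2$, but this is the annular condition for $S_{NC}(p,q)$; for $\pi = \pi_1\times\pi_2 \in NC(p)\times NC(q)$ one has $|\pi| + |\pi^{-1}\gamma_{p,q}| = |\gamma_{p,q}|$ (no $+2$). Redoing the computation of Lemma~\ref{annular-order} with this correction gives
\[
|\sigma| + |\sigma^{-1}\gamma_{p,q}\pi^{-1}|
= |\gamma_{p,q}| + 2 - |\pi|
= |\gamma_{p,q}\pi^{-1}| + 2,
\]
not $|\gamma_{p,q}\pi^{-1}|$. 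In particular $\sigma \not\leq \gamma_{p,q}\pi^{-1}$ in general: indeed $\gamma_{p,q}\pi^{-1} = \gamma_p\pi_1^{-1}\times\gamma_q\pi_2^{-1}$ has no through cycles, while $\sigma \in S_{NC}(p,q)$ does, so a through cycle of $\sigma$ cannot sit inside a single cycle of $\gamma_{p,q}\pi^{-1}$. Your invocation of Lemma~\ref{annular-order} is therefore illegitimate (its hypothesis is $\pi \in S_{NC}(p,q)$), and your conclusion $\sigma\vee\gamma_{p,q}\pi^{-1}=0_{\gamma_{p,q}\pi^{-1}}$ is false.

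What is actually needed --- and what the paper's proof establishes --- is that $|\sigma\vee\gamma_{p,q}\pi^{-1}| = |\gamma_{p,q}\pi^{-1}| + 1$, i.e.\ the join merges \emph{exactly one pair} of cycles of $\gamma_{p,q}\pi^{-1}$, one from each circle, so that the right-hand side is a tunnel permutation of length $|\gamma_{p,q}\pi^{-1}|+2$ matching the left-hand side. Showing that only one pair of cycles is merged (equivalently, that all through cycles of $\sigma$ meet a single cycle of $\gamma_p\pi_1^{-1}$ and a single cycle of $\gamma_q\pi_2^{-1}$) is the substantive combinatorial step; the paper proves it by taking the first and last elements $a,b$ of a cycle of $\gamma_p\pi_1^{-1}$ met by a through block, observing $(\gamma_{p,q}^{-1}(a),b)\leq\pi\leq\sigma^{-1}\gamma_{p,q}$, and deducing that $\sigma$ lies below a two-block partition which separates any other cycle of $\gamma_p\pi_1^{-1}$ from the inner circle. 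Your proposal has no analogue of this argument, and without it the length identity for the partitioned-permutation product cannot be verified.
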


\begin{proof}
Since $\pi \leq \sigma^{-1} \gamma_{p,q}$ we have $|\pi| +
|\pi^{-1} \sigma^{-1} \gamma_{p,,q}| = |\sigma^{-1} \gamma_{p,q}|$.
Since $\pi \in NC(p) \times NC(q)$ we have $|\pi| + |\pi^{-1}
\gamma_{p,q}| = |\gamma_{p,q}|$ and since $\sigma \in S_{NC}(p,q)$
we have $|\sigma| + |\sigma^{-1} \gamma_{p,q}| = |\gamma_{p,q}| +
2$. Thus

\[
|(0_\sigma, \sigma)| + |(0_{\sigma^{-1} \gamma_{p.q}\pi^{-1}},
\sigma^{-1} \gamma_{p, q} \pi^{-1})| =
|\sigma| + |\sigma^{-1} \gamma_{p,q} \pi^{-1}| 
\]
\[ =
|\gamma_{p,q}| + 2 - |\sigma^{-1} \gamma_{p,q}| + |\sigma^{-1}
\gamma_{p,q}| - |\pi| =
|\gamma_{p,q} \pi^{-1}| + 2
\]
also
\[
|(\sigma \vee \gamma_{p,q}\pi^{-1}, \gamma_{p,q}\pi^{-1})| = 2 |
\sigma \vee \gamma_{p,q}\pi^{-1}| - |\gamma_{p,q} \pi^{-1}|
\]
so we must show that $|\sigma \vee \gamma_{p,q}\pi^{-1}| =
|\gamma_{p,q} \pi^{-1}| + 1$ i.e that $\sigma \vee
\gamma_{p,q}\pi^{-1}$ joins two cycles of $\gamma_{p,q}
\pi^{-1}$ and the cycles lie on different circles. To achieve this we
shall show that if we write $\pi = \pi_1 \times \pi_2 \in NC(p)
\times NC(q)$ then all through cycles of $\sigma$ meet only one
cycle of $\gamma_p\pi_1^{-1}$.

Indeed, suppose that two cycles $c$ and $c'$ of
$\gamma_p \pi_1^{-1}$ meet through blocks of $\sigma$. Let $a, b \in
c$  be respectively the first and last elements of $c$ (since the
cycles do not cross this is well defined). Then $a$ and $b$ are in
the same cycle of $\gamma_{p,q}\pi^{-1}$. Thus
$(\gamma_{p,q}^{-1}(a), b) \leq \pi \leq \sigma^{-1} \gamma_{p,q}$.
Hence either 
\begin{enumerate}
\item
as a partition $\sigma \leq \{(a, \gamma_{p,q}(a), \dots, b, p+1,
\dots , p+q), (\gamma_{p,q}(b), \dots,\ab \gamma_{p,q}^{-1}(a)) \}$
or
\item
as a partition $\sigma \leq \{(a, \gamma_{p,q}(a), \dots, b),
(\gamma_{p,q}(b), \dots, \gamma_{p,q}^{-1}(a), p+1,
\dots , p+q)\}$
\end{enumerate}
depending on whether the through blocks of $\sigma$ meet the cyclic
interval $(a, \dots, b)$ or the cyclic interval $(\gamma_{p,q}(b),
\dots, \gamma_{p,q}^{-1}(a))$. Since $c \subset (a, b)$ and $c'
\subset (\gamma_{p,q}(b), \dots, \gamma_{p,q}^{-1}(a))$ it is
impossible for both $c$ and $c'$ to meet thorough blocks of
$\sigma$. 
\end{proof}

\begin{corollary}\label{second-annular-order-corollary}
Given the hypotheses of Lemma \ref{second-annular-order} we have 
\begin{enumerate}
\item every cycle of $\sigma$, which is not a through cycle,  is
contained in a cycle of $\gamma_{p,q} \pi^{-1}$;
\item there are two cycles of $\gamma_{p, q} \pi^{-1}$, one from each
circle, and all the through cycles of $\sigma$ are contained in
the union of these two cycles;
\item for each cycle of $\gamma_{p, q}\pi^{-1}$ that does not meet a
through cycle of $\sigma$ the enclosed cycles of $\sigma$ form a
non-crossing permutation of this cycle of $\gamma_{p, q} \pi^{-1}$;
\item the through cycles of $\sigma$ form a non-crossing annular permutation of
the union of the two cycles of $\gamma_{p, q} \pi^{-1}$ in (\textit{ii})
above.
\end{enumerate} 
\end{corollary}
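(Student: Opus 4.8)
The plan is to read off all four assertions from the structural output of Lemma \ref{second-annular-order} together with the partitioned-permutation product it establishes. Write $\pi = \pi_1 \times \pi_2 \in NC(p) \times NC(q)$ and set $\rho = \gamma_{p,q}\pi^{-1} = (\gamma_p\pi_1^{-1}) \times (\gamma_q\pi_2^{-1})$, which again lies in $NC(p)\times NC(q)$. The key fact proved inside Lemma \ref{second-annular-order} is that every through cycle of $\sigma$ meets only one cycle of $\gamma_p\pi_1^{-1}$ and, symmetrically, only one cycle of $\gamma_q\pi_2^{-1}$; combined with $|\sigma\vee\rho| = |\rho|+1$, this says that $\sigma\vee\rho$ joins exactly one cycle $c_1$ of $\gamma_p\pi_1^{-1}$ (on the outer circle) to exactly one cycle $c_2$ of $\gamma_q\pi_2^{-1}$ (on the inner circle), and fixes every other cycle of $\rho$ as a block.

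First I would establish (\textit{i}) and (\textit{ii}) simultaneously: from Lemma \ref{annular-order} we already know $\sigma \leq \gamma_{p,q}\pi^{-1} = \rho$, so each cycle of $\sigma$ — through or not — is contained in a cycle of $\rho$; the content of (\textit{ii}) is that the through cycles cannot be spread over more than the two distinguished cycles $c_1\cup c_2$, which is exactly the ``all through cycles of $\sigma$ meet only one cycle of $\gamma_p\pi_1^{-1}$'' statement from the proof of Lemma \ref{second-annular-order} (and its mirror on the inner circle), and the fact that the block of $\sigma\vee\rho$ containing the through cycles is $c_1\cup c_2$ follows because $|\sigma\vee\rho|-|\rho|=1$ forces exactly one merge. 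Thus $\sigma$ restricted to the complement of $c_1\cup c_2$ consists of non-through cycles, each sitting inside one cycle of $\rho$, giving (\textit{i}).

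Next, for (\textit{iii}) and (\textit{iv}) I would apply the restriction machinery of Lemmas \ref{restriction} and \ref{invariant}. Fix a cycle $c$ of $\rho$ with $c\neq c_1$ and $c\neq c_2$; since $\sigma \leq \rho$, the cycles of $\sigma$ inside $c$ are an invariant subset, and by Definition \ref{order1} (via Lemma \ref{metric-order}) they form a non-crossing partition of $c$ — but because $c$ is a single cycle lying entirely on one circle, ``non-crossing partition of $c$'' is the same as ``non-crossing permutation of $c$'' in the disc sense; that is (\textit{iii}). For (\textit{iv}) I would pass to the invariant set $c_1\cup c_2$: the restriction $\sigma|_{c_1\cup c_2}$ carries exactly the through cycles of $\sigma$ together with whatever cycles of $\sigma$ lie inside $c_1$ or $c_2$ individually, and $\rho|_{c_1\cup c_2}$ has the two cycles $c_1$ and $c_2$ — i.e.\ it is an annular $\gamma$ for the $(|c_1|,|c_2|)$-annulus. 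Applying $\pi\leq\sigma^{-1}\gamma_{p,q}$ and Lemma \ref{invariant} to the subset $c_1\cup c_2$ reduces us to precisely the defining geodesic situation of $S_{NC}$ on that smaller annulus, so $\sigma|_{c_1\cup c_2} \in S_{NC}(c_1,c_2)$, and since the through cycles of $\sigma$ are exactly the cycles of this restriction that connect the two circles, they form a non-crossing annular permutation of $c_1\cup c_2$.

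The main obstacle I anticipate is the bookkeeping in (\textit{iv}): one must be careful that restricting $\sigma$, $\rho$, and $\gamma_{p,q}$ all to the \emph{same} invariant subset $c_1\cup c_2$ is legitimate — $\gamma_{p,q}$ does \emph{not} leave $c_1\cup c_2$ invariant in general, so one cannot literally write $\gamma_{p,q}|_{c_1\cup c_2}$ and must instead argue through a non-crossing conjugate as in the proof of Lemma \ref{restriction}, choosing $u,v$ with $\tilde\sigma = \gamma_p^u\gamma_q^v\sigma\gamma_q^{-v}\gamma_p^{-u}\in NC(p+q)$, transporting everything to the disc, applying Biane's restriction result (Lemma \ref{metric-order} with $\sigma$ replaced by the single-cycle conjugate of $c_1\cup c_2$), and conjugating back. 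Once that transport is set up, the four claims are immediate consequences of the structure already extracted in Lemma \ref{second-annular-order}, so the corollary is essentially a repackaging with no new inequalities to prove.
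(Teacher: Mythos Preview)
Your argument has a genuine gap at the very first step. You invoke Lemma \ref{annular-order} to conclude $\sigma \leq \rho := \gamma_{p,q}\pi^{-1}$, but that lemma requires $\pi \in S_{NC}(p,q)$, whereas here $\pi \in NC(p)\times NC(q)$. More to the point, the conclusion you draw --- ``each cycle of $\sigma$, through or not, is contained in a cycle of $\rho$'' --- is simply false: $\rho = (\gamma_p\pi_1^{-1})\times(\gamma_q\pi_2^{-1})$ has every cycle lying on a single circle, so no through cycle of $\sigma$ can sit inside one. If you redo the computation in the proof of Lemma \ref{annular-order} with the present hypotheses you obtain
\[
|\sigma| + |\sigma^{-1}\rho| \;=\; |\rho| + 2,
\]
not $|\rho|$, so Lemma \ref{metric-order} is not available either. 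This undercuts your derivations of (\textit{i}) and (\textit{iii}), both of which are premised on $\sigma \leq \rho$.

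The repair uses what Lemma \ref{second-annular-order} actually gives you. From $|\sigma\vee\rho| = |\rho|+1$ the blocks of $\sigma\vee\rho$ are precisely the cycles of $\rho$ except that two of them, $c_1$ and $c_2$ (one on each circle, as shown in that proof), are merged. Every cycle of $\sigma$ lies in a block of $\sigma\vee\rho$; a through cycle must lie in the block meeting both circles, hence in $c_1\cup c_2$, while a non-through cycle lies on one circle and hence in a single cycle of $\rho$. That yields (\textit{i}) and (\textit{ii}). For (\textit{iii}) and (\textit{iv}), decompose the length equality $|\sigma| + |\sigma^{-1}\rho| = |\rho|+2$ over the blocks of $\sigma\vee\rho$: on each block $c\neq c_1,c_2$ the triangle inequality gives $|\sigma_c|+|\sigma_c^{-1}c|\geq |c|$, and on $c_1\cup c_2$ the annular lower bound gives $|\tilde\sigma|+|\tilde\sigma^{-1}(c_1c_2)|\geq |c_1c_2|+2$ since $\tilde\sigma$ connects $c_1$ and $c_2$. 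Summing and comparing with the global equality forces equality in every piece, which is exactly the disc non-crossing condition on each $c$ for (\textit{iii}) and the annular geodesic condition on $c_1\cup c_2$ for (\textit{iv}). No conjugation trick is needed.
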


\begin{figure}[t]
\begin{center}
\leavevmode
\includegraphics{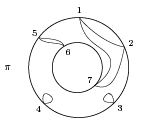} \quad
\includegraphics{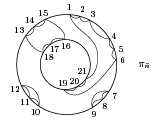}

\medskip
\leavevmode\vbox{\hsize300pt\small\noindent\raggedright
\textbf{Figure 7.} $\pi = (1,2,6)(3)(4)(5, 6) \in S_{NC}(5,2)$,
all $n_i$'s equal to 3, and $\pin = (1, 2, 3, 4, 5, 6, 19, 20, 21) 
(7, 8, 9)\ab (10, 11, 12)\ab (13,\ab 14,\ab 15, 16, 17, 18)$.}
\end{center}
\end{figure}

\begin{notation}
Let $n_1$, $n_2$, \dots, $n_{r+s}$ be positive integers and let $p
= n_1 + n_2 + \cdots + n_r$ and $q = n_{r+1} + \cdots + n_{r+s}$.
Let $N = \{ n_1, n_1 + n_2, \dots, n_1 + \cdots + n_{r+s} \}$.
Given $\pi \in S_{r+s}$ we define $\pin \in S_{p+q}$ as was done
in Definition \ref{fat}: for $i \not\in N$ $\pin(i) = i + 1$ and
$\pin(n_1 + \cdots + n_k) = n_1 + \cdots + n_{\pi(k) - 1} + 1 =
n_1 + \cdots + n_{\gamma_{r+s}^{-1}\pi(k)} + 1$. Then $\pin^{-1}
\gamma_{p,q} (n_1 + \cdots + n_k) = n_1 + \cdots + n_{\pi^{-1}
\gamma_{r,s}(k) }$.
\end{notation}

\begin{lemma}\label{intertwining}
If $\pi \in S_{NC}(r,s)$ then $\pin \in S_{NC}(p, q)$. 

\end{lemma}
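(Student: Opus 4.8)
## Proof Plan for Lemma \ref{intertwining}

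The plan is to show that $\pin \in S_{NC}(p,q)$ by verifying the two defining conditions: that some cycle of $\pin$ connects the two cycles of $\gamma_{p,q}$, and that the geodesic equation $|\pin| + |\pin^{-1}\gamma_{p,q}| = |\gamma_{p,q}| + 2$ holds. The strategy mirrors the proof of Lemma \ref{fatnc}, but now in the annular setting with $\gamma_{p,q}$ in place of $\gamma_n$.

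First I would count cycles. Since $\pin$ is obtained from $\pi$ by ``fattening'' each point $k$ into a block $T_k$ of size $n_k$ and routing $\pin$ through these blocks, we have $\#(\pin) = \#(\pi)$: the cycle structure of $\pin$ is exactly that of $\pi$ with each point $k$ replaced by its block $T_k$ traversed in order. Next, using the identity $\pin^{-1}\gamma_{p,q}(n_1 + \cdots + n_k) = n_1 + \cdots + n_{\pi^{-1}\gamma_{r,s}(k)}$ from the Notation preceding the lemma, together with the fact that $\pin^{-1}\gamma_{p,q}$ fixes every $i \notin N$, I get $\#(\pin^{-1}\gamma_{p,q}) = \#(\pi^{-1}\gamma_{r,s}) + (p+q) - (r+s)$, exactly as in the proof of Lemma \ref{fatnc}. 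Also $\#(\gamma_{p,q}) = \#(\gamma_{r,s}) = 2$. Substituting these into the geodesic sum gives
\[
\#(\pin) + \#(\pin^{-1}\gamma_{p,q}) + \#(\gamma_{p,q})
= \#(\pi) + \#(\pi^{-1}\gamma_{r,s}) + \#(\gamma_{r,s}) + (p+q) - (r+s),
\]
so the left side equals $(p+q) + 2$ if and only if $\#(\pi) + \#(\pi^{-1}\gamma_{r,s}) + \#(\gamma_{r,s}) = (r+s) + 2$, which holds since $\pi \in S_{NC}(r,s)$.

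It remains to check that at least one cycle of $\pin$ connects the two cycles of $\gamma_{p,q} = (1,\dots,p)(p+1,\dots,p+q)$. Since $\pi \in S_{NC}(r,s)$, some cycle of $\pi$ contains an index $i \leq r$ and an index $j > r$; then the corresponding cycle of $\pin$ contains points of $T_i \subseteq [p]$ and points of $T_j \subseteq [p+1,p+q]$, hence connects the two cycles of $\gamma_{p,q}$. Combining this with the geodesic identity just established, $\pin$ satisfies both criteria characterizing $S_{NC}(p,q)$, so $\pin \in S_{NC}(p,q)$.

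I expect the only subtlety to be keeping the bookkeeping straight about which block $T_k$ sits on which circle — one must check that the partition into $[p]$ and $[p+1,p+q]$ on the $(p,q)$-annulus is compatible with the partition $[r] \sqcup [r+1,r+s]$ on the $(r,s)$-annulus, i.e. that $\psi(i) = n_1 + \cdots + n_i$ lands in $[p]$ precisely when $i \leq r$. This is immediate from $p = n_1 + \cdots + n_r$, so there is no real obstacle; the argument is a routine adaptation of Lemma \ref{fatnc} once the cycle counts are in hand.
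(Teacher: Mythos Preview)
Your proof is correct and follows essentially the same approach as the paper: both use the intertwining relation $\psi\,\pi^{-1}\gamma_{r,s} = \pin^{-1}\gamma_{p,q}\,\psi$ to transfer the cycle count and verify the geodesic equation $\#(\pin) + \#(\pin^{-1}\gamma_{p,q}) + \#(\gamma_{p,q}) = p+q+2$. In fact you are slightly more thorough, since you explicitly check that $\pin$ has a through cycle connecting $[p]$ to $[p+1,p+q]$, a point the paper's terse proof leaves implicit.
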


\begin{proof}
Let $\psi : [r+s] \rightarrow [p+q]$ be given by $\psi(i) = n_1 +
\cdots + n_i$. Then as in Lemma \ref{fatnc} we have $\psi \pi^{-1}
\gamma_{r,s} = \pin^{-1} \gamma_{p,q} \psi$. Thus
\begin{eqnarray*}\lefteqn{
\#(\pin) + \#(\pin^{-1} \gamma_{p,q}) + \#(\gamma_{p,q}) } \\
&=&
\#(\pi) + \#(\pi^{-1} \gamma_{r,s}) + p + q - (r + s) +
\#(\gamma_{r,s}) = p + q + 2
\end{eqnarray*}

\end{proof}

\begin{proposition}\label{order}
Let $\pi \in S_{NC}(r, s)$ and $\pin \in S_{NC}(p, q)$. If $\sigma
\in S_{p+q}$ is such that 
\begin{enumerate}
\item each cycle of $\sigma$ is contained in some cycle of $\pin$;
\item for each cycle of $\pin$ the enclosed  cycles of $\sigma$
form a non-crossing permutation (see Figure 8); and
\item for each cycle $\tilde c_i$ of $\pin$ we have $\sigma_i \vee
\tau_i = \tilde c_i$, where $\sigma_i$ and $\tau_i$ are the
respective restrictions of $\sigma$ and $\tin$ to $\tilde c_i$,
\end{enumerate}
then $\sigma \in S_{NC}(p, q)$ and $\sigma^{-1} \pin$ separates
the points of $N$. 
\end{proposition}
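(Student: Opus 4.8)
\emph{Proof proposal.}
The plan is to imitate the proof of Theorem~\ref{separates}: dissect $[p+q]$ along the cycles of $\pin$ and reduce each piece to the one-circle situation of Lemma~\ref{first-sep}. Write the cycle decomposition of $\pi$ as $c_1 c_2 \cdots c_k$, so that $\pin = \tilde c_1 \cdots \tilde c_k$ with $\tilde c_i = (c_i)_{\vec n}$, and let $\sigma_i$ and $\tau_i$ be the restrictions of $\sigma$ and $\tin$ to the set $\tilde c_i$; by hypothesis (i) and the definition of $\tin$, each of $\sigma$, $\pin$, $\tin$ leaves every $\tilde c_i$ invariant, so these restrictions are genuine. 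The key preliminary point is that, after reading the points of $\tilde c_i$ in the cyclic order in which $\tilde c_i$ visits them --- the blocks $T_j$ with $j$ in the cycle $c_i$, each internally increasing and arranged in the order dictated by $c_i$ --- the triple $(\sigma_i, \tau_i, \tilde c_i)$ is an instance of the data of Lemma~\ref{first-sep}: writing $m = |\tilde c_i|$, the cycle $\tilde c_i$ plays the role of $\gamma_m$, $\tau_i$ is an interval partition whose set of block-endpoints (last elements) is exactly $N \cap \tilde c_i$, and $\sigma_i$ is a non-crossing permutation of $\tilde c_i$ by hypothesis (ii). This is the step I expect to need the most care, since it requires checking that the interior of a (possibly through) non-crossing block of $\pin$ is genuinely a disc on whose boundary the points of $\tilde c_i$ appear in the cyclic order of $\tilde c_i$, so that ``non-crossing'' for $\sigma_i$ in (ii) is non-crossing in the ordinary sense and $\tau_i$ really is an interval partition.

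Granting this, hypothesis (iii) is precisely $\sigma_i \vee \tau_i = \tilde c_i$ for every $i$, so Lemma~\ref{first-sep} yields that $\sigma_i^{-1}\tilde c_i$ separates $N \cap \tilde c_i$ for every $i$. Since $\sigma$ and $\pin$ each leave every $\tilde c_i$ invariant, so does $\sigma^{-1}\pin$, and (restriction to an invariant set being a homomorphism, with $\pin|_{\tilde c_i} = \tilde c_i$) we get $(\sigma^{-1}\pin)|_{\tilde c_i} = \sigma_i^{-1}\tilde c_i$. Hence every cycle of $\sigma^{-1}\pin$ lies inside a single $\tilde c_i$ and contains at most one point of $N$; as $N$ is the disjoint union of the sets $N \cap \tilde c_i$, this says exactly that $\sigma^{-1}\pin$ separates $N$, which is the second conclusion.

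For the first conclusion, hypotheses (i) and (ii) say that each cycle of $\sigma$ lies in a cycle of $\pin$, which is in $S_{NC}(p,q)$ by Lemma~\ref{intertwining}, and that the cycles of $\sigma$ enclosed by any cycle of $\pin$ form a non-crossing permutation of it; by the extension of the relation $\leq$ discussed after Definition~\ref{order1}, this already forces $\sigma \in (NC(p)\times NC(q)) \cup S_{NC}(p,q)$. To rule out the alternative $\sigma \in NC(p)\times NC(q)$ I would exhibit a through cycle of $\sigma$. Since $\pi \in S_{NC}(r,s)$ it has a cycle $c_1$ meeting both $[r]$ and its complement, so $\tilde c_1$ is a through cycle of $\pin$ and the blocks of $\tau_1$ split into a nonempty family lying on the outer circle and a nonempty family lying on the inner circle. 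A permutation with every cycle on one circle cannot by itself join the outer part to the inner part, so the equality $\sigma_1 \vee \tau_1 = \tilde c_1$ from (iii) forces some cycle of $\sigma_1$, hence of $\sigma$, to meet both circles. Therefore $\sigma \in S_{NC}(p,q)$, completing the proof.
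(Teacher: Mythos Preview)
Your proof is correct and follows essentially the same architecture as the paper's: reduce to the cycles of $\pin$, invoke the one-circle result (the paper cites Theorem~\ref{separates} directly; you unwind its proof via Lemma~\ref{first-sep}, which is arguably more honest since Theorem~\ref{separates} is literally stated only for $\sigma \in NC(n)$), and then use the remark after Definition~\ref{order1} to place $\sigma$ in $(NC(p)\times NC(q))\cup S_{NC}(p,q)$.

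The one place you diverge is the argument that $\sigma$ has a through cycle. The paper picks specific points $n_1+\cdots+n_{k_1}\in N_1$ and $n_1+\cdots+n_{k_2}\in N_2$ in the through cycle $\tilde c_i$ of $\pin$, uses the already-established separation to see they lie in different cycles of $\sigma_i^{-1}\tilde c_i$, and then applies Lemma~\ref{tracial-inequality} to conclude $\sigma_i \not\leq \tilde c_i(n_1+\cdots+n_{k_1},\,n_1+\cdots+n_{k_2}) = d_1 d_2$ with $d_1\subset[p]$, $d_2\subset[p+1,p+q]$. Your argument is more elementary: every block of $\tau_1$ lies entirely on one circle, so if every cycle of $\sigma_1$ did too, then $\sigma_1\vee\tau_1$ would be refined by the two-block partition $\{\tilde c_1\cap[p],\,\tilde c_1\cap[p+1,p+q]\}$, contradicting $\sigma_1\vee\tau_1=\tilde c_1$. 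This bypasses Lemma~\ref{tracial-inequality} entirely and avoids having to check that the chosen cut $\tilde c_i(n_1+\cdots+n_{k_1},\,n_1+\cdots+n_{k_2})$ really splits into an outer piece and an inner piece, which in the paper's version depends on choosing $k_1,k_2$ so that $\pi(k_1)$ crosses to the inner circle and $\pi(k_2)$ crosses back.
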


\begin{proof}
From the remark above $\sigma \in (NC(p) \times NC(q)) \cup
S_{NC}(p, q)$. From the theorem above $\sigma^{-1} \pin$
separates the points of $N$. Since $\pi$ has a through block (i.e.
a cycle that connects the two circles) there are $k_1, l_2 \leq p$
and $k_2, l_1 > p$ all in the same cycle of $\pi$, so that
$\pi(k_1) = l_1$ and $\pi(k_2) = l_2$. Then $\pin(n_1 + \cdots +
n_{k_1}) = n_1 + \cdots n_{l_1-1} +1$ and $\pin(n_1 + \cdots +
n_{k_2}) = n_1 + \cdots n_{l_2-1} +1$. Let $\tilde c_i$ be the
cycle of $\pin$ containing $n_1 + cdots n_{k_1}$ and hence $n_1 +
\cdots n_{k_2}$. Since $\sigma^{-1} \tilde c_i$ separates the
points of $N \cap \tilde c_i$, $n_1 + \cdots + n_{k_1}$ and $n_1 +
\cdots + n_{k_2}$ must be in different cycles of $\sigma^{-1}
\tilde c_i$, hence by Lemma \ref{tracial-inequality}, $\sigma_i
\not\leq \tilde c_i  (n_1 + \cdots + n_{k_1}, n_1 + \cdots +
n_{k_2}) = d_1 d_2$ where the points on the cycle $d_1$ are
contained in $[p]$ and those of $d_2$ in $[p+1, p+q] = \{ p+1,
\dots, p+q\}$. Thus $\sigma_i$ has a through block, and so $\sigma
\in S_{NC}(p, q)$. 
\end{proof}

\begin{figure}[t]
\begin{center}
\leavevmode\includegraphics{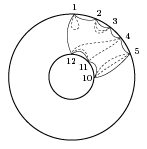}

\bigskip\leavevmode
\vbox{\hsize300pt\noindent\raggedright\small{\bf Figure 8.} 
A cycle of a non-crossing annular permutation 
$\pi$ and, in dotted lines, the enclosed cycles of a $\sigma$ with
$\sigma \leq \pi$.}
\end{center}
\end{figure}

Our last step in this section is to translate the criterion of
equation (\ref{ks}) to the annular case. We suppose we have $a_1,
\dots, a_{p+q} \in \cA$ and we let $A_1 = a_1 \cdots a_{n_1}$,
\dots $A_{r+s} = a_{n_1 + \cdots + n_{r+s-1}+1} \cdots a_{n_1 +
\cdots + n_{r+s}}$. 

\begin{proposition}\label{ksa}
Let $\pi \in S_{NC}(r, s)$ and $\pin \in S_{NC}(p, q)$. Then
\[
\kappa_\pi(A_1, \dots, A_{r+s}) =
\sum_{\sigma \in S_{NC}(p, q)}
\kappa_\sigma(a_1, \dots, a_{p+q})
\]
where the sum is over all $\sigma$ such that $\sigma^{-1}\pin$
separates the points of $N$. 
\end{proposition}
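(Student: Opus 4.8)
The plan is to reduce Proposition~\ref{ksa} to the first-order Krawczyk--Speicher formula~(\ref{ks}), applied one cycle of $\pi$ at a time. Write $\pi = c_1 \cdots c_k$ in cycle notation. By Definition~\ref{fat} the cycles of $\pin$ are $\tilde c_1, \dots, \tilde c_k$ with $\tilde c_i = (c_i)_{\vec n}$, and the support $S_i$ of $\tilde c_i$ is the union of the intervals $T_j$ with $j \in c_i$; the sets $S_i$ partition $[p+q]$. Since $\kappa_\pi$ is a product over the cycles of $\pi$ and each $\kappa_m$ is invariant under cyclic permutation of its arguments,
\[
\kappa_\pi(A_1, \dots, A_{r+s}) = \prod_{i=1}^{k} \kappa_{|c_i|}\bigl( (A_j)_{j \in c_i} \bigr),
\]
where the $i$-th factor depends only on the $a_\ell$ with $\ell \in S_i$, the products $A_j$ for $j \in c_i$ being those associated with the blocks $T_j$ of $\tin|_{S_i}$. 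I would expand each factor separately and then multiply the resulting sums back together, matching the total index set against $\{\sigma \in S_{NC}(p,q) : \sigma^{-1}\pin \text{ separates } N\}$.

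Expanding a single factor splits into two cases. If $c_i \subseteq [r]$ or $c_i \subseteq [r+1, r+s]$, then by Lemma~\ref{restriction} the cycle $c_i$ is a cycle of a non-crossing permutation of that circle, hence the increasing cycle on its support; consequently $\tilde c_i$ is the increasing cycle on $S_i$, the order-preserving relabelling of $S_i$ onto $\{1, \dots, |S_i|\}$ carries $\tilde c_i$ to $\gamma_{|S_i|}$ and $\tin|_{S_i}$ to an interval partition, and (\ref{ks}) combined with Lemma~\ref{first-sep} yields $\kappa_{|c_i|}\bigl((A_j)_{j\in c_i}\bigr) = \sum_{\sigma_i} \kappa_{\sigma_i}\bigl((a_\ell)_{\ell\in S_i}\bigr)$, summed over $\sigma_i \in NC(S_i)$ with $\sigma_i^{-1}\tilde c_i$ separating $N \cap S_i$. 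If instead $c_i$ is a cycle of $\pi$ meeting both circles, then $\kappa_{|c_i|}(\cdot)$ is again an ordinary first-order cumulant, so (\ref{ks}) still applies, but the relabelling of $S_i$ onto $\{1, \dots, |S_i|\}$ that carries $\tilde c_i$ to $\gamma_{|S_i|}$ is no longer order preserving: it is the map unfolding the sub-annulus $(S_i \cap [p],\, S_i \cap [p+1, p+q])$ along its through-cycle, in the sense of \cite[\S3]{mn}. This is the step I expect to require the most care. One must verify that under the unfolding $\tin|_{S_i}$ again becomes an interval partition --- each $T_j$, being a segment of a $\tilde c_i$-orbit, maps to a consecutive arc --- and that pushing (\ref{ks}) back through the unfolding produces $\kappa_{|c_i|}(\cdot) = \sum_{\sigma_i} \kappa_{\sigma_i}(\cdot)$ with $\sigma_i$ now ranging over $S_{NC}(S_i \cap [p],\, S_i \cap [p+1, p+q])$ and $\sigma_i^{-1}\tilde c_i$ separating $N \cap S_i$; this uses the correspondence of \cite[Equation 5.2]{mn} between non-crossing permutations of a disk and of the cut annulus, together with the observation that the condition $\sigma_i \vee (\tin|_{S_i}) = \tilde c_i$ \emph{forces} $\sigma_i$ to have a cycle meeting both circles, since every block of $\tin|_{S_i}$ lies in a single circle.

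Multiplying the factors, $\kappa_\pi(A_1, \dots, A_{r+s}) = \sum_\sigma \kappa_\sigma(a_1, \dots, a_{p+q})$, where $\sigma$ is the permutation of $[p+q]$ whose restriction to each $S_i$ is the chosen $\sigma_i$; equivalently, $\sigma$ runs over the permutations with $\sigma \leq \pin$ such that, for each cycle $\tilde c_i$ of $\pin$, the enclosed cycles of $\sigma$ form a non-crossing permutation of $\tilde c_i$ (an annular one when $\tilde c_i$ meets both circles) and $\sigma_i \vee (\tin|_{S_i}) = \tilde c_i$. By Proposition~\ref{order} these conditions imply $\sigma \in S_{NC}(p,q)$, and by Theorem~\ref{separates} they are jointly equivalent to $\sigma^{-1}\pin$ separating $N$. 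For the reverse inclusion one checks that every $\sigma \in S_{NC}(p,q)$ with $\sigma^{-1}\pin$ separating $N$ automatically satisfies $\sigma \leq \pin$ together with the cycle-wise join conditions --- this follows from Lemma~\ref{metric-order} and the cycle structure of $\pin$ --- so that $\sigma$ occurs in the expansion above. Identifying the two index sets gives the asserted formula.
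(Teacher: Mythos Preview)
Your approach is essentially the paper's: factor $\kappa_\pi$ over the cycles of $\pi$, apply the first-order formula~(\ref{ks}) to each factor, and reassemble via Proposition~\ref{order} and Theorem~\ref{separates}. The paper does this uniformly in one stroke --- each $\kappa_{c_i}$ is just a first-order cumulant, so (\ref{ks}) applies directly and returns $\sigma_i$ ranging over $NC(\tilde c_i)$, i.e.\ disc non-crossing relative to the cyclic order of $\tilde c_i$, regardless of whether $c_i$ is a through cycle. Your case distinction and the ``unfolding'' discussion are therefore unnecessary detours: the relabelling carrying $\tilde c_i$ to $\gamma_{|S_i|}$ is exactly what is needed to invoke~(\ref{ks}), and whether it preserves the ambient order on $[p+q]$ is irrelevant. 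Be careful with the phrase ``$\sigma_i$ now ranging over $S_{NC}(S_i\cap[p],\,S_i\cap[p+1,p+q])$'': what (\ref{ks}) actually hands you is $NC(\tilde c_i)$ with the join condition, which by the through-block observation embeds into that annular set, but is in general strictly smaller --- and $NC(\tilde c_i)$ is precisely what condition (ii) of Proposition~\ref{order} requires. If you literally summed over the annular set you would overcount. On the other hand, you are more explicit than the paper about the reverse inclusion (that $\sigma\in S_{NC}(p,q)$ with $\sigma^{-1}\pin$ separating $N$ forces $\sigma\leq\pin$ and the cycle-wise conditions), which the paper leaves implicit.
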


\begin{proof}
Let $c_1 \, c_2 \, \cdots \, c_k$ be the cycle decomposition of
$\pi$ and $\tilde c_1\, \tilde c_2\, \cdots \, \tilde c_k$ the
corresponding cycle decomposition of $\pin$, as in the proof of
Theorem \ref{separates}. Using the notation of equation
(\ref{product-definition})
\[
\kappa_\pi(A_1, \dots, A_{r+s}) =
\kappa_{c_1}(A_1, \dots, A_{r+s}) \cdots
\kappa_{c_k}(A_1, \dots, A_{r+s})
\]
By equation (\ref{ks}) 
\[
\kappa_{c_i}(A_1, \dots, A_{r+s}) =
\sum_{\sigma_i} \kappa_{\sigma_i}
(a_1, \dots , a_{p+q})
\]
where $\sigma_i$ runs over all non-crossing partitions of the
point set of the cycle $\tilde c_i$ such that $\sigma_i \vee
\tau_i = \tilde c_i$ (using the notation of Theorem
\ref{separates}), or equivalently that $\sigma_i^{-1} \tilde c_i$
separates the points of $N \cap \tilde c_i$. Thus
\[
\kappa_\pi(A_1, \dots, A_{r+s}) =
\sum_{\sigma_1}\cdots\sum_{\sigma_k}
\kappa_{\sigma_1}(a_1, \dots, a_{p+q}) \cdots
\kappa_{\sigma_k}(a_1, \dots, a_{p+q})
\]
where for each $i$, $\sigma_i$ runs over the non-crossing
partitions of $\tilde c_i$ such that $\sigma_i^{-1} \tilde c_i$
separates the points of $N \cap \tilde c_i$. Multiplying the
$\sigma_i$'s together we obtain all the $\sigma$'s satisfying the
hypotheses of Proposition \ref{order}. Thus the sum is over all
$\sigma \in S_{NC}(p, q)$ such that $\sigma^{-1} \pin$ separates
the points of $N$. 
\end{proof}

\section{proof of the main theorem}

Throughout this section we shall assume that we are given positive
integers $n_1, n_2, \dots , n_{r+s}$ and elements $a_1, \dots,
a_{n_1 + \cdots + n_{r+s}} \in \cA$, 
where $(\cA, \phi, \phi_2)$ is a second order
non-commutative probability space. We let $N = \{ n_1, n_1 + n_2,
\dots , n_1 + \cdots + n_{r+s} \}$, $p = n_1 + \cdots + n_r$, $q =
n_{r+1} + \cdots + n_{r+s}$, and $N_1 = [p] \cap N$ and $N_2 = N
\cap [p+1, p+q]$. We let $A_1 = a_1 \cdots a_{n_1}$, \dots, $A_{r+s}
= a_{n_1 + \cdots + n_{r + s - 1} + 1} \cdots a_{n_1 + \cdots +
n_{r+s}}$. 

We shall prove Theorem \ref{main} by induction on $(r, s)$. So to
begin let us prove the theorem when $r = s =1$.

\begin{lemma}\label{firstcase}
\begin{eqnarray*}
\kappa_{1,1}(a_1 \cdots a_p, a_{p+1} \cdots a_{p+q}) &=&
\sum_{\sigma \in S_{NC}(p,q)} \kappa_\sigma(
a_1, \dots, a_{p+q})\\
&&\quad \mbox{} +
\sum_{(\cU, \sigma)} \kappa_{(\cU, \sigma)} (a_1, \dots, a_{p+q})
\end{eqnarray*}
where in the first sum $\sigma$ is such that $p$ and $p+q$ are in
different cycles of $\sigma^{-1} \gamma_{p, q}$ and in the second
sum $(\cU, \sigma)$ runs over $\cPS(p,q)'$. 
\end{lemma}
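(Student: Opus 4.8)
The plan is to reduce $\kappa_{1,1}(A_1,A_2)$, with $A_1=a_1\cdots a_p$ and $A_2=a_{p+1}\cdots a_{p+q}$, to the two quantities $\phi_2(A_1,A_2)$ and $\kappa_2(A_1,A_2)$, to expand each of them by the relations already available, and then to reconcile the index sets that appear. Applying the defining relation (\ref{second-moment-cumulant-eq}) in the case $p=q=1$ (where $\cPS(1,1)=\{(1_2,(1,2)),(1_2,e)\}$) gives $\phi_2(A_1,A_2)=\kappa_2(A_1,A_2)+\kappa_{1,1}(A_1,A_2)$, so $\kappa_{1,1}(A_1,A_2)=\phi_2(A_1,A_2)-\kappa_2(A_1,A_2)$. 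Now $\phi_2(A_1,A_2)=\phi_2(a_1\cdots a_p,\,a_{p+1}\cdots a_{p+q})$ is itself a left-hand side of (\ref{second-moment-cumulant-eq2}), so it expands as $\sum_{\pi\in S_{NC}(p,q)}\kappa_\pi(a_1,\dots,a_{p+q})+\sum_{(\cU,\sigma)\in\cPS(p,q)'}\kappa_{(\cU,\sigma)}(a_1,\dots,a_{p+q})$; and the Krawczyk--Speicher formula (\ref{ks}), applied to the two factors $A_1,A_2$, gives $\kappa_2(A_1,A_2)=\sum_\sigma\kappa_\sigma(a_1,\dots,a_{p+q})$ over $\sigma\in NC(p+q)$ with $\sigma\vee\tau_{\vec n}=1_{p+q}$, where $\tau_{\vec n}=\{(1,\dots,p),(p+1,\dots,p+q)\}$. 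Since $\kappa_2(A_1,A_2)$ carries no second-order cumulants, the $\cPS(p,q)'$-sum from $\phi_2(A_1,A_2)$ survives the subtraction untouched and is already the second sum in the statement, so everything reduces to matching the first-order contribution.

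For the first-order part I would establish the combinatorial identity
\[
\sum_{\substack{\pi\in S_{NC}(p,q)\\ p,\,p+q\text{ in the same cycle of }\pi^{-1}\gamma_{p,q}}}\kappa_\pi
\;=\;
\sum_{\substack{\sigma\in NC(p+q)\\ \sigma\vee\tau_{\vec n}=1_{p+q}}}\kappa_\sigma ,
\]
all $\kappa$'s being evaluated at $(a_1,\dots,a_{p+q})$; subtracted from $\sum_{\pi\in S_{NC}(p,q)}\kappa_\pi$ this leaves exactly the sum over those $\pi\in S_{NC}(p,q)$ for which $p$ and $p+q$ lie in different cycles of $\pi^{-1}\gamma_{p,q}$, which is the first sum in the statement. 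By Lemma~\ref{first-sep} (with $n=p+q$ and $N=\{p,p+q\}$) the condition $\sigma\vee\tau_{\vec n}=1_{p+q}$ is equivalent to $p$ and $p+q$ lying in different cycles of $\sigma^{-1}\gamma_{p+q}$, where $\gamma_{p+q}=(1,2,\dots,p+q)$. I would then check that the identity map on permutations is a bijection between the two index sets. The engine is the relation $\gamma_{p+q}=\gamma_{p,q}\,(p,p+q)$: multiplying by the transposition $(p,p+q)$ splits a cycle of $\pi^{-1}\gamma_{p,q}$ when $p,p+q$ share one and merges two cycles otherwise, so $|\pi^{-1}\gamma_{p+q}|$ equals $|\pi^{-1}\gamma_{p,q}|-1$ in the first case and $|\pi^{-1}\gamma_{p,q}|+1$ in the second. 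Substituting this into the annular geodesic equality $|\pi|+|\pi^{-1}\gamma_{p,q}|=|\gamma_{p,q}|+2$ turns it, precisely in the same-cycle case, into $|\pi|+|\pi^{-1}\gamma_{p+q}|=|\gamma_{p+q}|$, i.e. $\pi\in NC(p+q)$ with $p,p+q$ in different cycles of $\pi^{-1}\gamma_{p+q}$, and the computation is reversible. Conversely such a $\sigma\in NC(p+q)$ cannot factor through $NC(p)\times NC(q)$, since then $\sigma\le\tau_{\vec n}$ would force $\sigma\vee\tau_{\vec n}=\tau_{\vec n}\ne 1_{p+q}$, against Lemma~\ref{first-sep}; hence $\sigma$ has a cycle joining the two circles and lies in $S_{NC}(p,q)$. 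Finally $\kappa_\pi(a_1,\dots,a_{p+q})$ depends only on the cycles of $\pi$, one factor $\kappa_{|c|}$ per cycle taken in its cyclic order, so the two sides agree term by term, with no double counting because on $NC(p+q)$ a non-crossing permutation is determined by its underlying set partition.

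The step I expect to be the main obstacle is exactly this bookkeeping: keeping straight that $S_{NC}(p,q)$ is read as a set of permutations whereas $NC(p+q)$ in (\ref{ks}) is read as set partitions, and verifying the connectedness clause so that the identity map genuinely lands in $S_{NC}(p,q)$ and not in $NC(p)\times NC(q)$. The remaining parts are direct substitutions of (\ref{second-moment-cumulant-eq}), (\ref{second-moment-cumulant-eq2}), (\ref{ks}) and Lemma~\ref{first-sep}.
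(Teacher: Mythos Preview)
Your proof is correct and follows essentially the same approach as the paper: both subtract the Krawczyk--Speicher expansion of $\kappa_2(A_1,A_2)$ from the moment--cumulant expansion of $\phi_2(a_1\cdots a_p,a_{p+1}\cdots a_{p+q})$, leave the $\cPS(p,q)'$-sum untouched, and identify the $NC(p+q)$ index set (via Lemma~\ref{first-sep}) with those $\sigma\in S_{NC}(p,q)$ having $p,p+q$ in the same cycle of $\sigma^{-1}\gamma_{p,q}$ using $\gamma_{p+q}=\gamma_{p,q}(p,p+q)$. The only cosmetic difference is that you carry out the geodesic-length computation explicitly, whereas the paper appeals to Lemma~\ref{tracial-inequality} and \cite[Equation~5.2]{mn} for the through-block/connectedness step.
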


\begin{proof}
We have two expressions for $\phi_2(a_1 \cdots a_p, a_{p+1} \cdots
a_{p+q})$. The first is
\begin{eqnarray*}
\phi_2(a_1 \cdots a_p, a_{p+1} \cdots a_{p+q}) &=&
\sum_{\sigma \in S_{NC}(p, q)}
\kappa_\sigma(a_1, \dots , a_{p+q})\\ 
&& \mbox{} +
\sum_{(\cU, \sigma) \in \cPS(p, q)'}
\kappa_{(\cU, \sigma)}(a_1, \dots , a_{p+q}) \\
\end{eqnarray*}
and the second is
\begin{eqnarray*}
\phi_2(a_1 \cdots a_p, a_{p+1} \cdots a_{p+q}) &=&
\kappa_2(a_1 \cdots a_p, a_{p+1} \cdots a_{p+q})\\ 
&& \mbox{} +
\kappa_{1,1}(a_1 \cdots a_p, a_{p+1} \cdots a_{p+q}) \\
\end{eqnarray*}
Combining these two with equation (\ref{ks}) we have
\begin{eqnarray}\lefteqn{
\kappa_{1,1}(a_1 \cdots a_p, a_{p+1} \cdots a_{p+q})
= \sum_{\sigma \in S_{NC}(p, q)}
\kappa_\sigma(a_1, \dots , a_{p+q})}\\
&& \mbox{} - 
\sum_{\sigma  \in NC(p+q)} \kappa_\sigma(a_1, \dots
a_{p+q}) + 
\sum_{(\cU, \sigma)}
\kappa_{(\cU, \sigma)}(a_1, \dots , a_{p+q})\notag
\end{eqnarray}
where the second sum, by  Lemma \ref{first-sep}, is over $\sigma \in
NC(p+q)$ such that $p$ and $p+q$ are in different cycles of
$\sigma^{-1} \gamma_{p+q}$.

Now each $\sigma \in NC(p+q)$ such that $p$ and $p+q$ are in
different cycles of $\sigma^{-1} \gamma_{p+q}$ must have a through
block by Lemma \ref{tracial-inequality} (i.e.\@ a block connecting a
point in $[p]$ to one in $[p+1, p+q]$) and thus by
\cite[Equation 5.2]{mn} is a also a non-crossing $(p, q)$-annular
permutation. Finally since $\sigma^{-1} \gamma_{p,q} = \sigma^{-1}
\gamma_{p+q} (p, p+q)$, we have that $p$ and $p+q$ are in the same
cycle of $\sigma^{-1}\gamma_{p,q}$. Hence the $\sigma$'s in the
second sum are exactly those in $S_{NC}(p, q)$ for which $p$ and
$p+q$ are in the same cycle of $\sigma^{-1} \gamma_{p, q}$.
Removing these from the first sum we are left with the $\sigma$'s
for which $p$ and $p+q$ are in different cycles of $\sigma^{-1}
\gamma_{p,q}$. 

Note that for the term 
\[
\sum_{(\cU, \sigma)} \kappa_{(\cU, \sigma)}(a_1, \dots , a_{p+q})
\]
$\sigma = \sigma_1 \times \sigma_2 \in NC(p) \times NC(q)$ so that
the condition that $p$ and $p+q$ are in different cycles of
$\sigma^{-1} \gamma_{p,q} = \sigma_1^{-1} \gamma_p \times \sigma_2
\gamma_q$ is automatically satisfied.

\end{proof}

In order to give an inductive proof of our main theorem we shall
need an extension of its statement to the case of partitioned
permutations.

\begin{lemma}\label{inductive}
Suppose that Equation $($\ref{main-eq}\,$)$ holds for $r' \leq r$
and
$s'
\leq s$. Then for $(\cV, \pi) \in \cPS(r,s)'$ we have
\[
\kappa_{(\cV, \pi)} (A_1 , \dots , A_{r+s}) = \kern-0.75em
\sum_{ \sigma \in S_{NC}(p,q)} \kern-0.75em
\kappa_\sigma (a_1, \dots, a_{p+q}) +
\sum_{(\cU, \sigma)} \kappa_{(\cU, \sigma)}
(a_1, \dots , a_{p+q})
\]
where the first sum is over all $\sigma$'s such that $\sigma \leq
\vin$ (i.e. each cycle of $\sigma$ is contained in a block of
$\vin$) and $\sigma^{-1} \pin$ separates the points of $N$,
and the second sum is over all $(\cU, \sigma)$ such that
$\cU \leq \vin$ and
$\sigma^{-1} \pin$ separates the points of $N$.
\end{lemma}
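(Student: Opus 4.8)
The plan is to prove Lemma~\ref{inductive} by reducing it to the already-established building blocks: Lemma~\ref{firstcase}, Proposition~\ref{ksa}, and the separation criterion of Theorem~\ref{separates}. Recall that $(\cV,\pi)\in\cPS(r,s)'$ means $\pi=\pi_1\times\pi_2\in NC(r)\times NC(s)$ and $\cV$ has exactly one block $V_0$ containing two cycles of $\pi$ (one from $\pi_1$, one from $\pi_2$), all other blocks of $\cV$ containing a single cycle of $\pi$. By the definition of $\kappa_{(\cV,\pi)}$ via equation~(\ref{product-definition2}), the factor coming from $V_0$ is a second order cumulant $\kappa_{r_0,s_0}$ of the entries $A_j$ indexed by the two cycles in $V_0$, and every other factor is a first order cumulant $\kappa_{V}$ of the $A_j$'s. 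So the strategy is: expand each factor separately using the appropriate already-proved product formula, then multiply the expansions together and identify the combined index set of non-crossing (partitioned) permutations $\sigma$ on $[p+q]$.

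First I would fix notation for the blocks of $\cV$ and which groups $n_i$ they collect, so that each block $V$ of $\cV$ other than $V_0$ determines a cyclic word $A_{j_1}\cdots A_{j_t}$ (a cycle of $\pi$), hence a sub-problem of the Krawczyk--Speicher type over the corresponding point set $\tilde V\subset[p+q]$ (the union of the $T_i$'s it collects), while $V_0$ determines a second-order sub-problem over its point set $\tilde V_0$, which splits as $\tilde V_0\cap[p]$ on the outer circle and $\tilde V_0\cap[p+1,p+q]$ on the inner circle. For the $V\ne V_0$ factors I apply Proposition~\ref{ksa} (in its single-cycle, i.e. purely first-order, form, which is just equation~(\ref{ks})): $\kappa_V(A_\bullet)=\sum_{\sigma_V}\kappa_{\sigma_V}(a_\bullet)$ where $\sigma_V$ runs over non-crossing permutations of $\tilde V$ with $\sigma_V^{-1}(\pin|_{\tilde V})$ separating the points of $N\cap\tilde V$ (here $\pin|_{\tilde V}$ is the single cycle that is the ``fattened'' version of the corresponding cycle of $\pi$). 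For the $V_0$ factor I apply the case $r=1,s=1$ of the main theorem, which is precisely Lemma~\ref{firstcase} applied to the pair $(a_{i_1}\cdots a_{i_{p_0}},\,a_{j_1}\cdots a_{j_{q_0}})$ of products indexed by the outer and inner arcs of $V_0$: this produces a sum of $\kappa_\sigma$'s over $\sigma\in S_{NC}$ of that $(p_0,q_0)$-annulus with the two distinguished endpoints separated by $\sigma^{-1}\gamma$, plus a sum over $(\cU_0,\sigma_0)\in\cPS'$ of that annulus. The hypothesis that (\ref{main-eq}) holds for $r'\le r$, $s'\le s$ is what lets me invoke this; strictly I only need the $(1,1)$ case here, but keeping the inductive hypothesis in the statement makes the logic uniform with later inductive steps.

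Next I would multiply the expansions. A product $\kappa_{\sigma_{V_1}}(a_\bullet)\cdots\kappa_{\sigma_{V_k}}(a_\bullet)\cdot\kappa_{(\cU_0,\sigma_0)}(a_\bullet)$ of factors supported on the disjoint point sets $\tilde V_1,\dots,\tilde V_k,\tilde V_0$ assembles into a single $\kappa_{(\cU,\sigma)}(a_1,\dots,a_{p+q})$ where $\sigma$ is the permutation whose restriction to each $\tilde V$ is $\sigma_V$ (resp. $\sigma_0$ on $\tilde V_0$), and $\cU$ is $\cU_0$ together with the cycles-of-$\sigma$-as-blocks coming from the $V\ne V_0$ pieces; equivalently $\cU\le\vin$. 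When $\cU_0=0_{\sigma_0}$ (i.e. the $V_0$ factor contributed a plain annular $\sigma_0$), $\cU=0_\sigma$ and we land in the first sum $\sum_{\sigma\in S_{NC}(p,q)}$; when $\cU_0\in\cPS'$ we land in the second sum. To see that $\sigma$ ranges exactly over the permutations with $\sigma\le\vin$ and $\sigma^{-1}\pin$ separating $N$, I use the locality of the separation condition: since $\pin$, $\sigma$, $\tin$ all leave each fattened cycle $\tilde V$ invariant, ``$\sigma^{-1}\pin$ separates $N$'' is equivalent to ``$\sigma_V^{-1}(\pin|_{\tilde V})$ separates $N\cap\tilde V$ for every block $V$ of $\cV$,'' which is exactly the conjunction of the local conditions produced by the factor-wise expansions (this is the same cycle-by-cycle argument used in the proof of Theorem~\ref{separates} and Proposition~\ref{ksa}, now carried out over the blocks of $\cV$ rather than the cycles of $\pin$). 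The two-cycle block $V_0$ is handled by the $(1,1)$ case, where ``separates'' degenerates to ``the two endpoints $p_0$ and $p_0+q_0$ of the annulus are in different cycles of $\sigma_0^{-1}\gamma$,'' matching the condition in Lemma~\ref{firstcase}.

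I expect the main obstacle to be bookkeeping rather than a deep difficulty: one must check that the assembly map $(\sigma_{V_1},\dots,\sigma_{V_k},(\cU_0,\sigma_0))\mapsto(\cU,\sigma)$ is a bijection onto $\{(\cU,\sigma):\cU\le\vin,\ \sigma^{-1}\pin\text{ separates }N\}$, and in particular that it correctly distinguishes the first sum (where $\cU=0_\sigma$ and $\sigma\in S_{NC}(p,q)$, so exactly one block of $0_\sigma$ straddles the two circles) from the second sum (where the straddling information instead sits in the $\cPS'$ datum). The subtle point is that the ``exceptional two-cycle block'' of the output $(\cU,\sigma)$ must be the block of $\vin$ containing $V_0$: a through-cycle of $\sigma$ can only arise inside $V_0$ (because for $V\ne V_0$ the point set $\tilde V$ lies entirely on one circle, since $\pi_1,\pi_2$ are separate non-crossing permutations), so the case split of Lemma~\ref{firstcase} on the $V_0$ factor is exactly the case split between the two sums in the conclusion. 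I would also double-check the degenerate sub-cases where $V_0$'s outer or inner arc is a single group $T_i$, and where some $V\ne V_0$ is a single $T_i$ (so the $\sigma_V$ sum is trivial), to be sure no terms are miscounted; with that verification the multilinearity and the disjoint-support structure make the rest routine.
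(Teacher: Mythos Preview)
Your overall architecture matches the paper's proof exactly: factor $\kappa_{(\cV,\pi)}(A_1,\dots,A_{r+s})$ over the blocks of $\cV$, expand each single-cycle block via the first-order product formula (equation~(\ref{ks}) / Proposition~\ref{ksa}), expand the exceptional two-cycle block $V_0$ via the second-order product formula, and then reassemble the local $\sigma_V$'s into a global $(\cU,\sigma)$ using the cycle-by-cycle locality of the separation condition. The bookkeeping you describe for the bijection is also what the paper does.

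There is, however, a genuine gap in how you treat the $V_0$ factor. You correctly identify that block as contributing $\kappa_{r_0,s_0}(A_{j_1},\dots,A_{j_{r_0}},A_{l_1},\dots,A_{l_{s_0}})$, where $r_0$ and $s_0$ are the lengths of the two cycles $c'$, $c''$ of $\pi$ lying in $V_0$. But you then say you will expand it using ``the case $r=1$, $s=1$ of the main theorem, which is precisely Lemma~\ref{firstcase} applied to the pair $(a_{i_1}\cdots a_{i_{p_0}},\,a_{j_1}\cdots a_{j_{q_0}})$.'' That would compute $\kappa_{1,1}(A_{j_1}\cdots A_{j_{r_0}},\,A_{l_1}\cdots A_{l_{s_0}})$, which is \emph{not} the same as $\kappa_{r_0,s_0}(A_{j_1},\dots,A_{l_{s_0}})$ unless $r_0=s_0=1$. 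Consequently your claim that ``strictly I only need the $(1,1)$ case here'' is false: to expand the $V_0$ factor in terms of the $a$'s you must invoke equation~(\ref{main-eq}) at level $(r_0,s_0)$, which is exactly why the inductive hypothesis is stated for all $r'\le r$, $s'\le s$. Correspondingly, the separation condition produced on the $V_0$ piece is not merely ``the two endpoints $p_0$ and $p_0+q_0$ are in different cycles of $\sigma_0^{-1}\gamma$,'' but the full condition that $\sigma_0^{-1}\tilde c'\tilde c''$ separates all $r_0+s_0$ points of $N\cap\tilde V_0$. Once you make this correction, your reassembly argument goes through and coincides with the paper's.
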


\begin{proof}
Let us write $\pi$ as a product of cycles: $c_1 \cdots c_k c' c''$
where the cycles $c_1$, \dots, $c_k$ are the cycles of $\pi$ that
are the only cycle in the block of $\cV$ that contains them and $c'$
and $c''$ are the two cycles of $\pi$ that lie in a single block of
$\cV$. Let $1_{c' \cup c''}$ be this block of $\cV$
that contains the two cycles $c'$ and $c''$. Then by
Equation (\ref{main-eq})
\begin{eqnarray*}\lefteqn{
\kappa_{(\cV, \pi)}(A_1, \dots, A_{r+s}) =
\kappa_{c_1}(A_1, \dots, A_{r+a}) \cdots 
\kappa_{c_k}(A_1, \dots, A_{r+s}) }\\
&&\mbox{} \times 
\kappa_{(1_{c' \cup c''}, c' c'')}(A_1, \dots , A_{r+s}) \\
&=&
\sum_{\sigma_1, \dots, \sigma_k} 
\kappa_{\sigma_1}(a_1, \dots, a_{p+q}) \cdots
\kappa_{\sigma_k}(a_1, \dots, a_{p+q}) \\
&&\mbox{} \times \Big\{
\sum_{\sigma_0} \kappa_{\sigma_0}(a_1, \dots, a_{p+q}) +
\sum_{(\cU_0, \sigma_{00})}
\kappa_{(\cU_0, \sigma_{00})}(a_1, \dots, a_{p+q}) \Big\}
\end{eqnarray*}
where $\sigma_i$ runs over $NC(\tilde c_i)$ such that $\sigma^{-1}
\tilde c_i$ separates the points of $N \cap \tilde c_i$; $\sigma_0$
runs over $S_{NC}(\tilde c', \tilde c'')$ such that $\sigma_0^{-1}
\tilde c' \tilde c''$ separates the points of $N \cap 1_{\tilde c'
\cup \tilde c''}$; and $(\cU_0, \sigma_{00})$ runs
over $\cPS(\tilde c',\tilde c'')$ such that $\sigma_{00}^{-1}
\tilde c' \tilde c''$ separates the points of $N \cap 1_{\tilde c'
\cup \tilde c''}$. The product $\sigma_0 \sigma_1 \cdots \sigma_k$
thus runs over $\sigma \in S_{NC}(p, q)$ such that
$\sigma^{-1}\pin$ separates the points of $N$ and $\sigma \leq
\vin$. The product $(\cU_0, \sigma_{00}) \sigma_1
\cdots \sigma_k$ runs over $(\cU, \sigma) \in \cPS(p,q)'$ such that
$\sigma^{-1} \pin$ separates the points of $N$ and $\cU \leq \vin$.
This proves our assertion. 
\end{proof}

In the next lemma we consider $\sigma \in S_{NC}(p, q)$ such that
$\sigma^{-1} \gamma_{p, q}$ \textit{fails} to separate the points
of $N$, we wish to determine $\pi \in S_{NC}(r, s)$ such that
$\pin$ will behave like the supremum of $\sigma$ and $\tin$ in
that with respect to $\pin$, $\sigma$ will satisfy the three
conditions of Proposition \ref{order}.

\begin{lemma}\label{first-order-uniqueness}
Let $\sigma \in S_{NC}(p, q)$ be such that
$\sigma^{-1}\gamma_{p,q}$ has a cycle that meets both $N_1 = N
\cap [n]$ and $N_2 = N \cap [p+1, p+q]$. Then there is a unique
$\pi \in S_{NC}(r, s)$ such that relative to $\pi$, $\sigma$
satisfies the three conditions of Proposition \ref{order}.
\end{lemma}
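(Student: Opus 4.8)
The plan is to reconstruct $\pi$ from $\sigma$ by looking at how the cycles of $\sigma^{-1}\gamma_{p,q}$ sit relative to the points of $N$. The guiding principle, made precise in Theorem \ref{separates}, is that $\sigma\vee\tin = \pin$ is equivalent to $\sigma^{-1}\pin$ separating $N$; so I want to \emph{define} a candidate $\pin$ as the ``coarsest'' permutation still above $\sigma$ (in the sense of Definition \ref{order1}) whose associated $\pin^{-1}\gamma_{p,q}$ does not merge points of $N$ that are forced together by $\sigma$. Concretely, using the Remark that $\sigma^{-1}\pin$ separates $N$ iff any two points in the same cycle of $\tin^{-1}\gamma_n$ lie in different cycles of $\sigma^{-1}\pin$, I read off from $\sigma^{-1}\gamma_{p,q}$ which consecutive ``groups'' (the blocks $T_1,\dots,T_{r+s}$) must be amalgamated: declare $i\sim j$ whenever the endpoints $n_1+\cdots+n_{i}$ and $n_1+\cdots+n_{j}$ of $T_i,T_j$ lie in a common cycle of $\sigma^{-1}\gamma_{p,q}$. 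One checks this generates an equivalence relation whose classes are cyclic intervals on each circle, and the resulting block structure on $[r+s]$ is realized by a unique $\pi\in S_{r+s}$ of the form needed; that $\pi$ lies in $S_{NC}(r,s)$ follows because $\sigma$ has a through cycle (the hypothesis that $\sigma^{-1}\gamma_{p,q}$ meets both $N_1$ and $N_2$ forces, via Lemma \ref{tracial-inequality}, a through cycle of $\sigma$, hence $\pi$ inherits a through cycle by Lemma \ref{intertwining} run backwards).

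Next I would verify the three conditions of Proposition \ref{order} for this $\pi$. Condition (iii), namely $\sigma_i\vee\tau_i=\tilde c_i$ on each cycle $\tilde c_i$ of $\pin$, is essentially the defining property of $\pi$: by Lemma \ref{first-sep} applied inside $\tilde c_i$, $\sigma_i\vee\tau_i=\tilde c_i$ iff $\sigma_i^{-1}\tilde c_i$ separates $N\cap\tilde c_i$, and by construction I have merged exactly those $T_j$'s that $\sigma^{-1}\gamma_{p,q}$ forced together, so no further merging can occur inside a single cycle of $\pin$ --- any finer $\pi'$ would violate separation, any coarser would fail $\sigma\le\pin$. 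Conditions (i) and (ii) --- each cycle of $\sigma$ sits inside a cycle of $\pin$, and the enclosed cycles form a non-crossing (annular or planar) permutation --- follow from $\sigma\in S_{NC}(p,q)$ together with Lemma \ref{metric-order} and the fact that the $\sim$-classes are cyclic intervals, so that grouping the $T_j$'s into the cycles of $\pin$ never cuts through a cycle of $\sigma$.

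The remaining point is uniqueness. Suppose $\pi'\in S_{NC}(r,s)$ also makes $\sigma$ satisfy the three conditions of Proposition \ref{order}. By that proposition $\sigma^{-1}\pi'_{\vec n}$ separates $N$, and by condition (iii) plus Lemma \ref{first-sep} the cycle-structure of $\pi'_{\vec n}$ is forced: within each cycle of $\pi'_{\vec n}$, $\sigma$ and $\tin$ join up to the whole cycle, so $\pi'_{\vec n}$ can be no coarser than $\sigma\vee\tin$ restricted there; and $\sigma\le\pi'_{\vec n}$ forces it to be no finer than the $\sim$-amalgamation I built. Hence $\pi'_{\vec n}=\pin$, and since $\pi\mapsto\pin$ is injective (distinct $\pi$'s give distinct block structures on $N$, visible already in $\pin|_N$), $\pi'=\pi$.

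The main obstacle I anticipate is showing that the relation $\sim$ actually yields \emph{cyclic intervals} on each circle and that the associated partition of $[r+s]$ is realized by a genuine element of $S_{NC}(r,s)$ rather than merely a partition --- i.e. controlling simultaneously the outer circle $[1,r]$, the inner circle $[r+1,r+s]$, and the through-cycle bookkeeping. This is exactly where one must use that $\sigma$ itself is a non-crossing annular permutation: the non-crossing condition on $\sigma^{-1}\gamma_{p,q}$ (which also lies in $(NC(p)\times NC(q))\cup S_{NC}(p,q)$ by the Remark following Definition \ref{order1}) prevents the forced amalgamations from crossing, so the $\sim$-classes are intervals and assemble into a non-crossing annular structure on $[r+s]$. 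Once that geometric fact is in hand, the verification of Proposition \ref{order}'s hypotheses and the uniqueness argument are routine.
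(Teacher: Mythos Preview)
Your overall instinct --- recover $\pi$ from the way $\sigma^{-1}\gamma_{p,q}$ groups the points of $N$ --- is exactly right and is what the paper does. But your execution has a genuine gap. The equivalence relation $\sim$ you define records only the \emph{partition} of $[r+s]$ induced by the cycles of $(\sigma^{-1}\gamma_{p,q})|_N$ (pulled back by $\psi$), and that partition is the cycle structure of $\pi^{-1}\gamma_{r,s}$, \emph{not} of $\pi$. In the annular setting a non-crossing partition does not determine a unique non-crossing permutation, so you cannot pass from ``block structure on $[r+s]$'' to ``a unique $\pi\in S_{r+s}$'' the way you do in the disc case. Your assertion that the $\sim$-classes are ``cyclic intervals on each circle'' is also not correct: the cycles of $\pi^{-1}\gamma_{r,s}$ are those of a non-crossing annular permutation and need not be intervals at all. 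The same difficulty undermines your uniqueness argument: showing $\pi'_{\vec n}=\sigma\vee\tin$ \emph{as partitions} does not pin down $\pi'_{\vec n}$ as a permutation, hence does not determine $\pi'$.

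The paper avoids this by never passing through an equivalence relation. It works directly with the \emph{permutation} $(\sigma^{-1}\gamma_{p,q})|_N$, which by Lemma~\ref{restriction} lies in $S_{NC}(N_1,N_2)\cup(NC(N_1)\times NC(N_2))$; the hypothesis forces it into $S_{NC}(N_1,N_2)$. One then \emph{defines} $\pi$ by $\psi\,\pi^{-1}\gamma_{r,s}\,\psi^{-1}=(\sigma^{-1}\gamma_{p,q})|_N$, so $\pi$ is recovered as a permutation, not merely a partition. The rest is short: since $\pin^{-1}\gamma_{p,q}$ is the identity off $N$ and agrees with $\sigma^{-1}\gamma_{p,q}$ on $N$, one has $\pin^{-1}\gamma_{p,q}\le\sigma^{-1}\gamma_{p,q}$, hence $\sigma\le\pin$ by Lemma~\ref{annular-order}, and Lemma~\ref{first-sep} on each cycle gives condition~(iii). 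Uniqueness is equally direct: if $\sigma^{-1}\pin$ separates $N$ then $(\sigma^{-1}\pin)|_N=\mathrm{id}_N$, so by Lemma~\ref{invariant} $(\sigma^{-1}\gamma_{p,q})|_N=(\pin^{-1}\gamma_{p,q})|_N$, which via the intertwining $\pin^{-1}\gamma_{p,q}\,\psi=\psi\,\pi^{-1}\gamma_{r,s}$ determines $\pi$. The key lemma you are missing is Lemma~\ref{restriction}; once you keep the cyclic (permutation) information in the restriction rather than collapsing to $\sim$-classes, the ``obstacle'' you flag dissolves.
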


\begin{proof}
Let us first deal with uniqueness. Suppose $\pi \in S_{NC}(r,s)$ is
such that $\sigma^{-1} \pin$ separates the points of $N$. Then
$(\sigma^{-1} \pin)|_N = \textit{id}_N$ and for $i \not\in N$,
$\pin^{-1} \gamma_{p, q} (i) = i$. Thus by Lemma \ref{invariant},
$(\sigma^{-1} \gamma_{p,q})|_N = (\sigma^{-1} \pin)|_N\cdot
(\pin^{-1} \gamma_{p,q})|N = (\pin^{-1} \gamma_{p, q})|_N$. Since
$\pin^{-1} \gamma_{p,q} \psi = \psi \pi^{-1} \gamma_{r,s}$ by Lemma
\ref{intertwining}, $\pi$ is uniquely determined by
$(\sigma^{-1}
\gamma_{p, q})|_N$. 

To show existence consider $(\sigma^{-1} \gamma_{p, q})|_N$.
By Lemma \ref{invariant}, $(\sigma^{-1} \gamma_{p, q})|_N
\in S_{NC}(N_1, N_2) \cup (NC(N_1) \times NC(N_2))$.
By assumption, $\sigma^{-1} \gamma_{p,q}$ has a cycle meeting
both $N_1$ and $N_2$; thus $(\sigma^{-1} \gamma_{p,q})|_N \in
S_{NC}(N_1, N_2)$. Then there is $\pi \in S_{NC}(r, s)$ such
that $\psi \pi^{-1} \gamma_{r, s}\psi^{-1}  = (\sigma^{-1}
\gamma_{p, q})|_N$.

Now $\pin^{-1} \gamma_{p,q}(i) = i$ for $i \not \in N$ and $\pin^{-1}
\gamma_{p,q}|_N = \sigma^{-1} \gamma_{p,q} |_N$ together imply that
$\pin^{-1} \gamma_{p,q} \leq \sigma^{-1} \gamma_{p,q}$ and thus by
Lemma \ref{annular-order}, $\sigma \leq \pin$ and the cycles of
$\sigma$ form a non-crossing partition of those of $\pin$. Finally
by applying Lemma \ref{first-sep} to each cycle of $\pin$ we obtain
that the three conditions of Proposition \ref{order} are satisfied. See
Figure 9.
\end{proof}


\begin{figure}[t]
\begin{center}
\leavevmode
\hbox{\includegraphics{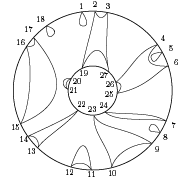}\hfill\includegraphics{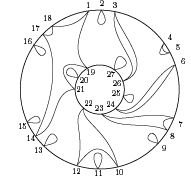}}

\

\hbox to \hsize{
\includegraphics{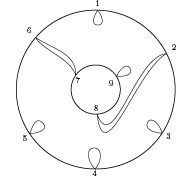} \hfill\includegraphics{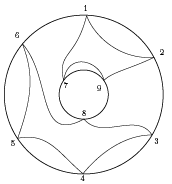}}

\bigskip\leavevmode
\vbox{\hsize300pt\noindent\small
\raggedright
{\bf Figure 9.} 
Shown is $\sigma \in S_{NC}(18,9)$ in the upper left,
$\sigma^{-1}\gamma_{18,9}$ in the upper right. In the lower left we have
$\pi^{-1} \gamma_{6,3}$ and the lower right we have $\pi \in S_{NC}(6,3)$.
$\pin$ is shown overleaf. Note that when $N = \{ 3, 6, 9, 12, 15,
18, 21, 24, 27 \}$, $\sigma$ satisfies the
hypotheses of Lemma \ref{first-order-uniqueness}, for example 6
and 24 are in the same cycle of $\sigma^{-1} \gamma_{18,9}|_N$.
The resulting $\pi^{-1} \gamma_{6,3}$ has 2 and 8 in the same
cycle and thus $\pi \in S_{NC}(6,3)$. }
\end{center}
\end{figure}

\begin{lemma} \label{second-order-uniqueness}
Suppose that $\sigma \in S_{NC}(p, q)$ is such
that $\sigma^{-1}  \gamma_{p,q}$ does not separate the
points of $N$ but no cycle of $\sigma^{-1}\gamma_{p, q}$
meets both $N_1$ and $N_2$. Then there is a unique $(\cV,
\pi) \in \cPS(r,s)$ such that $\vin = \sigma \vee \tin$ and
$\sigma^{-1} \pin $ separates the points of $N$.
\end{lemma}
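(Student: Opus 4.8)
The plan is to follow closely the proof of Lemma~\ref{first-order-uniqueness}; the one structural change is that, since no cycle of $\sigma^{-1}\gamma_{p,q}$ meets both $N_1$ and $N_2$, the permutation we recover will lie in $NC(r)\times NC(s)$ rather than in $S_{NC}(r,s)$, so that we must in addition manufacture the exceptional two-cycle block of $\cV$. Uniqueness goes through verbatim: if $(\cV,\pi)$ works, then $\sigma^{-1}\pin$ separating the points of $N$ forces $(\sigma^{-1}\pin)|_N=\textit{id}_N$; since $\pin^{-1}\gamma_{p,q}$ fixes $[p+q]\setminus N$, Lemma~\ref{invariant} gives $(\sigma^{-1}\gamma_{p,q})|_N=(\pin^{-1}\gamma_{p,q})|_N$; the intertwining $\pin^{-1}\gamma_{p,q}(n_1+\cdots+n_k)=n_1+\cdots+n_{\pi^{-1}\gamma_{r,s}(k)}$ (Notation preceding Lemma~\ref{intertwining}) then recovers $\pi^{-1}\gamma_{r,s}$, hence $\pi$, from $(\sigma^{-1}\gamma_{p,q})|_N$, and $\vin=\sigma\vee\tin$ recovers $\cV$, since a partition of $[p+q]$ coarser than $\tin$ comes from a unique partition of $[r+s]$.

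For existence I would first produce $\pi$ as in Lemma~\ref{first-order-uniqueness}: $(\sigma^{-1}\gamma_{p,q})|_N$ lies in $S_{NC}(N_1,N_2)\cup\bigl(NC(N_1)\times NC(N_2)\bigr)$, and the hypothesis rules out the annular alternative, so transporting through $\psi(i)=n_1+\cdots+n_i$ and complementing on each circle yields $\pi=\pi_1\times\pi_2\in NC(r)\times NC(s)$ with $\psi\pi^{-1}\gamma_{r,s}\psi^{-1}=(\sigma^{-1}\gamma_{p,q})|_N$; then $\pin\in NC(p)\times NC(q)$ by the product version of Lemma~\ref{fatnc}. Here the argument departs from the first-order case: $\pin^{-1}\gamma_{p,q}$ is not annular (its cycles are exactly the restrictions to $N$ of the cycles of $\sigma^{-1}\gamma_{p,q}$, at most one per cycle, so $\pin^{-1}\gamma_{p,q}\leq\sigma^{-1}\gamma_{p,q}$, but it has no through cycle), so in place of Lemma~\ref{annular-order} I would invoke Lemma~\ref{second-annular-order} and Corollary~\ref{second-annular-order-corollary} with $\pin^{-1}\gamma_{p,q}$ playing the role of $\pi$, noting $\gamma_{p,q}(\pin^{-1}\gamma_{p,q})^{-1}=\pin$. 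This describes exactly how $\sigma$ sits inside $\pin$: every non-through cycle of $\sigma$ lies in one cycle of $\pin$, all through cycles of $\sigma$ lie in the union $B$ of precisely two cycles of $\pin$, one on each circle --- necessarily the fattenings $(\hat c_1)_{\vec n}$ of a cycle $\hat c_1$ of $\pi_1$ and $(\hat c_2)_{\vec n}$ of a cycle $\hat c_2$ of $\pi_2$ --- with $\sigma|_B$ non-crossing annular on $B$ and $\sigma|_{\tilde c_j}$ non-crossing on every remaining cycle $\tilde c_j$ of $\pin$. I would then let $\cV$ be the partition of $[r+s]$ whose blocks are the cycles of $\pi$ with $\hat c_1$ and $\hat c_2$ merged; by Definition~\ref{non-crossing-def}, $(\cV,\pi)\in\cPS(r,s)'$.

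It then remains to verify the two conclusions for this $(\cV,\pi)$. That $\sigma^{-1}\pin$ separates $N$ follows from $(\sigma^{-1}\pin)|_N=(\sigma^{-1}\gamma_{p,q})|_N\bigl((\pin^{-1}\gamma_{p,q})|_N\bigr)^{-1}=\textit{id}_N$, using Lemma~\ref{invariant} and that $\gamma_{p,q}^{-1}\pin$ fixes $[p+q]\setminus N$. For $\vin=\sigma\vee\tin$, the structure above gives $\sigma\leq\vin$, while $\tin\leq\pin\leq\vin$, so $\sigma\vee\tin\leq\vin$; for the reverse, each block of $\vin$ is invariant under $\sigma$, $\pin$ and $\tin$, so one works a block at a time, applying Lemma~\ref{first-sep} on each unmerged cycle $\tilde c_j$ (where $(\sigma|_{\tilde c_j})^{-1}\tilde c_j=(\sigma^{-1}\pin)|_{\tilde c_j}$ separates $N\cap\tilde c_j$) and the annular analogue of Lemma~\ref{first-sep} on $B$ (where $(\sigma|_B)^{-1}(\pin|_B)=(\sigma^{-1}\pin)|_B$ separates $N\cap B$, so $\sigma|_B\vee\tin|_B=1_B$); assembling these gives $\sigma\vee\tin=\vin$.

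I expect the main obstacle to be the structural step in the second paragraph: checking $\pin^{-1}\gamma_{p,q}\leq\sigma^{-1}\gamma_{p,q}$ and then extracting from Corollary~\ref{second-annular-order-corollary}(ii)--(iv) that the through cycles of $\sigma$ cannot spread over more than one outer and one inner cycle of $\pin$, so that $\cV$ has one, and only one, block containing two cycles of $\pi$ (one from $\pi_1$, one from $\pi_2$). A secondary ingredient is the annular form of Lemma~\ref{first-sep} used in the verification; this is not among the statements above but is proved by exactly the argument used there (and is implicit in the proof of Proposition~\ref{ksa}), so I would insert it as a brief preliminary lemma.
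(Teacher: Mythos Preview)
Your proposal is correct and follows essentially the same route as the paper: uniqueness via Lemma~\ref{invariant} and the intertwining relation, construction of $\pi\in NC(r)\times NC(s)$ from $(\sigma^{-1}\gamma_{p,q})|_N$, then the application of Corollary~\ref{second-annular-order-corollary} (with $\pin^{-1}\gamma_{p,q}$ in the role of $\pi$) to locate the two cycles of $\pin$ that must be merged, followed by a block-by-block verification of $\vin=\sigma\vee\tin$ using Lemma~\ref{first-sep} on the disc cycles and an annular version on the merged block $B$. The only cosmetic difference is that the paper phrases the last step as showing $\sigma\vee\tin=\sigma\vee\pin$ (i.e.\ $\pin\leq\sigma\vee\tin$) rather than constructing $\cV$ first and then checking the equality; one small correction: the annular analogue of Lemma~\ref{first-sep} is not really implicit in Proposition~\ref{ksa} (which works disc-cycle by disc-cycle), but the paper handles it exactly as you suggest, by the direct contrapositive argument from Lemma~\ref{first-sep}.
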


\begin{proof}
Let us first deal with uniqueness. $\vin$ is determined by the
equation $\vin = \sigma \vee \tin$. If $\sigma^{-1} \pi$ separates
the points of $N$ then $\sigma^{-1} \pin|_N = \textit{id}_N$. Also
$\pin^{-1} \gamma_{p,q}$ is the identity on $N^c$, thus by Lemma
\ref{invariant} $\sigma^{-1} \gamma_{p,q} |_N = \sigma^{-1} \pin|_N
\cdot \pin^{-1} \gamma_{p,q}|_N = \pin^{-1} \gamma_{p,q}|_N$. And
since $\pin^{-1} \gamma_{p,q} \psi = \psi \pi^{-1} \gamma_{r,s}$ we
see that $\pi$ is uniquely determined. 

Secondly let us prove existence. Let us denote by $NC(N_1)$ and
$NC(N_2)$ respectively the non-crossing partitions of $N_1$ and $N_2$
where the order is that determined by $\gamma_{p, q}$. Similarly
$S_{NC}(N_1, N_2)$ denotes the non-crossing annular permutations on
$N_1 \cup N_2$, again with the order determined by $\gamma_{p, q}$. 

By Lemma \ref{restriction}, 
$$\sigma^{-1} \gamma_{p,q}|_N \in S_{NC}(N_1, N_2) \cup ( NC(N_1)
\times NC(N_2))$$ By assumption no cycle $\sigma^{-1}
\gamma_{p,q}|_N$ meets both $N_1$ and $N_2$, so $\sigma^{-1}
\gamma_{p,q}|_N \in NC(N_1) \times NC(N_2)$. Thus there is $\pi \in
NC(r) \times NC(s)$ such that $\psi \pi^{-1} \gamma_{r,s} \psi^{-1}
= \sigma^{-1} \gamma_{p,q} |_N$, and thus $\sigma^{-1} \gamma_{p,q}
|_N = \pin^{-1} \gamma_{p,q} |_N$. By Lemma \ref{invariant},
$\sigma^{-1} \pin|_N = \textit{id}_N$, so $\sigma^{-1} \pin$
separates the points of $N$.


\begin{figure}[t]
\begin{center}
\leavevmode
\hbox{\includegraphics{fig8a.png}\hfill\includegraphics{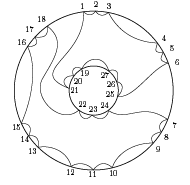}}

\bigskip\leavevmode
\vbox{\hsize300pt\noindent\small
\raggedright
{\bf Figure 10.} 
$\sigma$ (left)  from Figure 9 is compared with the $\pin \in
S_{NC}(18,9)$ produced by Lemma \ref{first-order-uniqueness}.}
\end{center}
\end{figure}

Recall that $\pin^{-1}\gamma_{p,q}(i) = i$ for $i \not \in N$ and
$\pi^{-1} \gamma_{p,q} |_N = \sigma^{-1} \gamma_{p,q} |_N$ thus
$\pin^{-1} \gamma_{p, q} \leq \sigma^{-1} \gamma_{p,q}$, and so by
Corollary \ref{second-annular-order-corollary}, 
$\sigma \vee \pin$ is obtained from the cycles of $\pin$ by joining the
two cycles of $\pin$, one from each circle, that meet the through cycles
of $\sigma$. To complete the proof we must show that $\sigma
\vee \tin = \sigma \vee \pin$, for this we must show that $\pin \leq
\sigma \vee \tin$ i.e. that any two blocks of $\tin$ connected by
$\pin$ are connected by $\sigma$. 

Let $c$ be a cycle of $\pin$ not meeting a through cycle of
$\sigma$ and let $\tilde \sigma$ the product of the cycles of
$\sigma$ enclosed by $c$. Then $\tilde\sigma^{-1} c$ separates the
points of $N$ in $c$ and so by Lemma \ref{first-sep} any two blocks
of $\tin$ in $c$ connected by $c$ are connected by $\tilde \sigma$.

Now let $c$ and $c'$ be the two cycles (one in each cycle of
$\gamma_{p,q}$) that meet the through cycles of $\sigma$ and let
$\tilde\sigma$ be the product of the cycles of $\sigma$ enclosed by
$c \cup c'$ and the same for $\tilde \tau$. Now we must show that
$\tilde \sigma \vee \tilde \tau = c \cup c'$. If not, then there
would be $r \not=s \in N$ such that $(r, s) \leq \tilde \sigma^{-1}
c c'$ in which case $r$ and $s$ are in the same cycle of
$\sigma^{-1}\pin$ contrary to our hypothesis. Hence $\sigma \vee
\tin = \pin$.
\end{proof}

\medskip\noindent\textit{%
Proof of Theorem 3:} Let us write $\phi_2(A_1 \cdots A_r, A_{r+1}
\cdots A_{r+s})$ two different ways:

\begin{eqnarray*}\lefteqn{
\phi_2(A_1 \cdots A_r, A_{r+1} \cdots A_{r+s}) 
=
\sum_{\pi \in S_{NC}(r,s)} \kappa_\pi(A_1, \dots, A_{r+s}) }\\
&& \mbox +
\sum_{(\cV, \pi) \in \cPS(p,q)'}
\kappa_{(\cV, \pi)}(A_1, \dots, A_{r+s})\\
&=&
\sum_{\pi \in S_{NC}(r,s)} \kappa_\pi(A_1, \dots, A_{r+s}) \\
&& \mbox +
\sum_{(\cV, \pi) \in \cPS(p,q)''}
\kappa_{(\cV, \pi)}(A_1, \dots, A_{r+s}) \\
&& \mbox{} +
\kappa_{r,s}(A_1, \dots, A_{r+s})
\end{eqnarray*}
where $\cPS(r, s)'' = \cPS(r,s)' \setminus \{(1_{r+s}, \gamma_{r,
s}) \}$. Also
\begin{eqnarray*}\lefteqn{
\phi_2(A_1 \cdots A_r, A_{r+1} \cdots A_{r+s})
= \phi_2(a_1 \cdots a_p, a_{p+1} \cdots a_{p+q}) }\\
&=&
\sum_{\pi \in S_{NC}(p, q)}
\kappa_\pi(a_1, \dots, a_{p+q}) \\
&& \mbox{} +
\sum_{(\cV, \pi) \in \cPS(p, q)'}
\kappa_{(\cV, \pi)}(a_1, \dots, a_{p+q}) \\
\end{eqnarray*}


\begin{figure}[t]
\begin{center}
\leavevmode
\hbox to \hsize
{\includegraphics{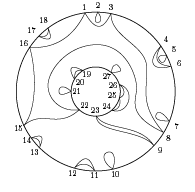}\hfill\includegraphics{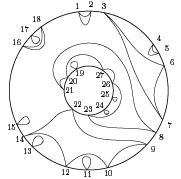}}

\

\hbox to \hsize%
{\includegraphics{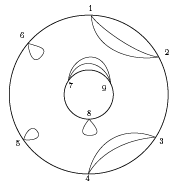}\hfill\includegraphics{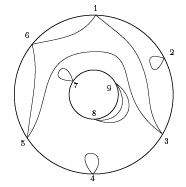}}

\bigskip\leavevmode
\vbox{\hsize350pt\noindent\small
\raggedright
{\bf Figure 11.} 
Shown is  $\sigma \in S_{NC}(18,9)$ in the upper left,
$\sigma^{-1}\gamma_{18,9}$ in the upper right. In the lower left we have
$\pi^{-1} \gamma_{6,3}$ and the lower right we have $\pi \in S_{NC}(6,3)$.
$\pin$ is shown overleaf. Note that when $N = \{ 3, 6, 9, 12, 15, 18,
21, 24, 27 \}$, $\sigma$ satisfies the
hypotheses of Lemma \ref{second-order-uniqueness}, for example 3
and 6 are in the same cycle of $\sigma^{-1} \gamma_{18,9}|_N$.
The resulting $\pi^{-1} \gamma_{6,3}$ has 1 and 2 in the same
cycle. }
\end{center}
\end{figure}

Now solving for $\kappa_{r,s}(A_1, \dots, A_{r+s})$ we have
\begin{eqnarray*}
\kappa_{r, s}(A_1, \dots, A_{r+s}) 
&=&
\sum_{\sigma \in S_{NC}(p,q)}
\kappa_\sigma(a_1, \dots, a_{p+q}) \\
&& \mbox{}-
\sum_{\pi \in S_{NC}(r, s)}
\kappa_\pi(A_1, \dots, A_{r+s}) \\
&& \mbox{} +
\sum_{(\cV, \pi) \in \cPS(p, q)}
\kappa_{(\cV, \pi)}(a_1, \dots, a_{p+q}) \\
&&\mbox{} -
\sum_{(\cV, \pi) \in \cPS(r, s)''} 
\kappa_{(\cV, \pi)}(A_1, \dots, A_{r+s})
\end{eqnarray*}

Now by Proposition \ref{ksa}
\begin{eqnarray*}
\sum_{\pi \in S_{NC}(r, s)}
\kappa_\pi(A_1, \dots, A_{r+s})
&=&
\sum_{\pi \in S_{NC}(r, s)}
\mathop{\sum_{\sigma \in S_{NC}(p, q)}}_%
{\sigma^{-1}\pin\ {\rm sep.\hbox{\tiny'} s}\ N}
\kappa_\sigma(a_1, \dots, a_{p+q})
\end{eqnarray*}
If $\pi \in S_{NC}(r, s)$ and $\sigma^{-1} \pin$ separates the
points of $N$ then $\sigma^{-1} \gamma_{p,q}|_N =
\pin^{-1}\gamma_{p,q}|_N$ by Lemma \ref{invariant} and
$\pin^{-1}\gamma_{p,q}$ has a cycle that meets both $N_1$ and $N_2$.
Conversely by Lemma \ref{first-order-uniqueness} if $\sigma \in
S_{NC}(p, q)$ is such that $\sigma^{-1} \gamma_{p,q}|_N$ has a
cycle that meets both $N_1$ and $N_2$ then $\sigma$ comes from a
unique $\pi \in S_{NC}(r, s)$. Thus


\begin{figure}[t]
\begin{center}
\leavevmode
\hbox{\includegraphics{fig9a.png}\hfill\includegraphics{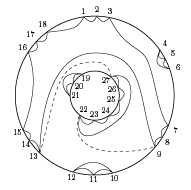}}

\bigskip\leavevmode
\vbox{\hsize350pt\noindent\small
\raggedright
{\bf Figure 12.} 
$\sigma$ (left)  from Figure 11 is compared with the $\pin \in
NC(18)\times NC(9)$ produced by Lemma \ref{second-order-uniqueness}. The
two cycles of $\pin$ that meet through cycles of $\sigma$ are shown
connected by dotted lines.}
\end{center}
\end{figure}

\begin{eqnarray*}
\sum_{\sigma \in S_{NC}(p, q)} 
\kappa_\sigma(a_1, \dots, a_{p+q})
&-&
\sum_{\pi \in S_{NC}(r, s)}
\mathop{\sum_{\sigma \in S_{NC}(p, q)}}_%
{\sigma^{-1}\pin\ {\rm sep.\hbox{\tiny'} s}\ N}
\kappa_\sigma(a_1, \dots, a_{p+q}) \\
&&=
\mathop{\sum_{\sigma \in S_{NC}(p, q)}}_%
{\sigma^{-1}\gamma_{p,q}\ {\rm sep.\hbox{\tiny'} s}\ N}
\kappa_\sigma(a_1, \dots, a_{p+q})
\end{eqnarray*}

To conclude the proof we use induction on $r$ and $s$. In Lemma
\ref{firstcase} we have proved equation (\ref{main-eq}) when $r = s =
1$. We can apply the induction hypothesis to $\cPS(r, s)''$ because
$(1_{r+s}, \gamma_{r,s})$ has been removed. Thus by Lemma 
\ref{inductive} we have:

\begin{eqnarray*}\lefteqn{
\sum_{(\cV, \pi) \in \cPS(r,s)''}
\kappa_{(\cV, \pi)} (A_1, \dots, A_{r+s})} \\
&=&
\sum_{(\cV, \pi) \in \cPS(r,s)''}
\mathop{\sum_{(\cU, \sigma) \in \cPS(p,q)'}}_%
{\sigma^{-1}\pin\ {\rm sep.\hbox{\tiny'} s}\ N}
\kappa_{(\cU, \sigma)}(a_1, \dots, a_{p+q}) \\
&=&
\mathop{\sum_{(\cU, \sigma) \in \cPS(p,q)'}}_%
{\sigma^{-1}\gamma_{p,q}\ {\rm does\ not\ sep.}\ N}
\kappa_{(\cU, \sigma)}(a_1, \dots, a_{p+q})
\end{eqnarray*}
where in the last sum not separating $N$ means that no cycle of
$\sigma^{-1}\gamma_{p,q}$ meets both $N_1$ and $N_2$ but some cycle
of $\sigma^{-1}\gamma_{p,q}$ contains more than one point of $N$.

Hence 
$$
\sum_{(\cV, \pi) \in \cPS(p, q)'}
\kappa_{(\cV, \pi)}(a_1, \dots, a_{p+q})
\kern0.5em - \kern-1em
\sum_{(\cV, \pi) \in \cPS(r,s)''}
\kappa_{(\cV, \pi)}(A_1, \dots, A_{r+s})
$$
$$
=
\mathop{\sum_{(\cU, \sigma) \in \cPS(p, q)'}}_%
{\sigma^{-1} \gamma_{p,q} {\rm sep.\hbox{\tiny'}s}\ N}
\kappa_{(\cU, \sigma)}(a_1, \dots ,a_{p+q})
$$
This proves Equation (\ref{main-eq}). \qed

\section{First example -- the square of a semi-circular operator}

Let $(\cA, \phi)$ be a unital *-algebra with $\phi : \cA
\rightarrow \bC$ a state. A self-adjoint element $x \in \cA$ is
called \textit{semi-circular} if $\phi(x^{2k-1}) = 0$ and
$\phi(x^{2k}) = \frac{1}{k+1} \binom{2k}{k}$ for $k = 1, 2, 3,
\dots$ The element $x$ is called semi-circular because the density
of its spectral measure with respect to $\phi$ is $\frac{1}{2 \pi}
\sqrt{4 - x^2}$. This density was shown by E. Wigner to be the
limiting eigenvalue distribution of various ensembles of random
matrices, in particular on ensemble usually known as the Gaussian
Unitary ensemble (\gue). This ensemble can be described as follows.

For each positive integer $N$ let $X_N = (f_{i,j})$ be the complex
$N \times N$ self-adjoint random matrix such that
\begin{itemize}
\item $f_{i,j} = x_{ij} + \sqrt{-1} y_{ij}$;

\item for $i \not = j$, $x_{ij}$ and $y_{ij}$ are
real Gaussian random variables with mean 0 and variance $1/(2N)$;

\item for each $i$, $x_{ii}$ is a real Gaussian random
variable with mean 0 and variance $1/N$

\item $\{ x_{ij} \}_{i \leq j} \cup \{ y_{ij} \}_{i <
j}$ is an independent set of random variables
\end{itemize}
Other descriptions exist, see for example Deift \cite[\S 5.2]{de} or
Hiai and Petz \cite[\S 4.1]{hp}.

The theorem of Wigner asserts that the limiting
eigenvalue distribution of $X_N$ has the semi-circular density
given above. In particular the limiting moments $(\alpha_k)$ are
those of a semi-circular operator: $\alpha_{2k-1} = 0$ and
$\alpha_{2k} = \frac{1}{k+1} \binom{2 k}{k}$ for $k \geq 1$. 

In \cite{j} Johansson considered the asymptotic behaviour of the
random variables $\{ \Tr( X_N^k - \alpha_k I_N) \}_k$ for
a class of ensembles containing the \gue. He showed that the
random variables were asymptotically Gaussian and showed that the
Chebyshev polynomials of the first kind diagonalized the
covariance.

In \cite{mn} Mingo and Nica showed that the limiting covariances
$\alpha_{p, q} = \lim_N \E( \Tr( X_N^p - \alpha_p I_N) \Tr( X_N^q -
\alpha_q I_N) )$, sometimes called the \textit{fluctuation
moments}, were positive integers which count the number of
non-crossing pairings of a $(p, q)$-annulus. These are the elements
of $S_{NC}(p, q)$ for which all cycles are of length 2. These
diagrams or equivalent formulations had already been used in a
variety of earlier papers: Tutte \cite{t} and \cite{t2}, Jones
\cite{jo}, and King \cite{ki} where it was found that

\begin{eqnarray}\label{semi-circular-fluctuations}\lefteqn{
\alpha_{p,q} = \sum_{k \geq 1} k \binom{ p}{\frac{p-k}{2}}
\binom{q}{\frac{q-k}{2}} } \\
&=& \begin{cases}
\ds \frac{p q}{2p + 2 q} 
\binom{p}{\frac{p}{2}} \binom{q}{\frac{q}{2}}
& \vrule width 0pt depth 2.5em 
p \mbox{\ and\ } q \mbox{\ are even} \\
\ds \frac{(p +1)(q +1)}{8p + 8 q} 
\binom{p+1}{\frac{p+1}{2}}
\binom{q+1}{\frac{q+1}{2}}
& p \mbox{\ and\ } q \mbox{\ are odd}
\end{cases} \notag
\end{eqnarray}

The first expression for $\alpha_{p,q}$ can be seen by observing
that every non-crossing pairing of a $(p, q)$-annulus with $k$
through strings (i.e. strings that connect the two circles) can be
obtained by connecting a non-crossing pairing of $[p]$ with one
block of size $k$ and the others of size 2, with a non-crossing
partition of $[q]$ with one block of size $k$ and the others of size
2; and then invoking Kreweras \cite[Th\'eor\`eme 2]{k} and
finally summing over $k$.

It is natural then to define $(\alpha_{p,q})_{p,q}$ to be the
fluctuation moments of the semi-circular operator.

Let $\cA = \bC[x]$ be the polynomials in $x$. Define $\phi: \cA
\rightarrow \bC$ by $\phi(x^k) = \alpha_k$ where $(\alpha_k)_k$ are
the moments of the semi-circular operator. Define $\phi_2(x^p, x^q)
= 0$ if $p =0$, $q=0$, or $p+q$ is odd and $\phi_2(x^p,
x^q) =\alpha_{p,q}$ (as defined in Equation
(\ref{semi-circular-fluctuations})) if $p+q$ is even. One then
extends $\phi$ and $\phi_2$ by linearity. Notice that the choice of
$\phi_2$ is independent of $\phi$. 

Now $(\cA, \phi, \phi_2)$ is a second order non-commutative
probability space and we can consider the cumulants of $x$. It is a
standard calculation to see that $\kappa_n(x, x, \dots, x)$ is 0
for all $n \not = 2$ and $\kappa_2(x, x) = 1$. In the case
of second order cumulants, $(\kappa_{p,q})_{p,q}$ are all 0. We
can see this directly from Equation
(\ref{second-moment-cumulant-eq2}) which states:
\begin{eqnarray}\label{second-moment-cumulant-eq3}\lefteqn{
\phi_2(x^p, x^q) =
\sum_{\pi \in S_{NC}(p,q)} \kappa_\pi(x, \dots, x) }\\
&&\mbox{} +
\sum_{(\cV, \pi) \in \cPS(p,q)'}
\kappa_{(\cV, \pi)}(x, \dots, x) \notag
\end{eqnarray}

Because $x$ is semi-circular, $\kappa_\pi(x, \dots, x)$ is 0 unless
$\pi$ is a pairing in which case $\kappa_\pi(x, \dots, x)$ is 1.
Thus the first term on the right hand side of
(\ref{second-moment-cumulant-eq3}) is $\alpha_{p,q}$. Since
by definition the left hand side is also $\alpha_{p,q}$ we have
that $\sum_{(\cV, \pi) \in \cPS(p,q)'} \kappa_{(\cV, \pi)} (x,
\dots, x) =0$ for all $p$ and $q$. Hence for $p = q = 1$,
$\kappa_{1,1}(x, \dots, x) =0$ as there is only one term. Now
$\kappa_{p,q}(x, \dots, x)$ only appears once in the expression
$\sum_{(\cV, \pi) \in \cPS(p,q)'} \kappa_{(\cV, \pi)} (x,
\dots, x)$, all the other terms have a factor of $\kappa_{r,s}(x,
\dots, x)$ for either $r < p$ and $s \leq q$ or $r \leq p$ and $s <
q$. Thus by induction $\kappa_{p,q}(x, \dots, x) = 0$ for all $p$
and $q$. 

Now let $a = x^2$. We shall use Theorem \ref{main} to show that the
second order cumulants $\kappa_{p,q} = \kappa_{p,q}(a, \dots, a)$
have the generating function
\[
C(z, w) = \sum_{p,q \geq 1} \kappa_{p,q} z^p w^q = 
\frac{z w}{(1 - z - w)^2}
\] and thus 
\[
\kappa_{p,q} = p \binom{p+q -1}{p}
\]

First let us recall the first order cumulants of $a$. Let $N = \{2,
4, 6, \dots, 2n \}$.  By Equation
(\ref{ks}) and Theorem \ref{separates} 
\[
\kappa_n(a, \dots, a) = \kappa_n(x^2, \dots, x^2) 
=\kern-1em
\mathop{\sum_{\pi \in NC(2n)}}_%
{\pi^{-1}\gamma_{2n}\ {\rm sep.\hbox{\tiny'} s}\ N}
\kappa_\pi(x, \dots, x)
\]


\begin{figure}[t]
\begin{center}
\leavevmode\includegraphics{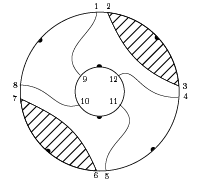}

\bigskip\leavevmode
\vbox{\hsize300pt\noindent\small
\raggedright
{\bf Figure 13.} 
An example of a pairing $\pi$ of the $(8, 4)$-annulus such that
$\pi^{-1} \gamma_{8,4}$ separates the points of $\{ 2, 4, 6, 8, 10 \}$.
To construct such a pairing we choose $p -k$ dots on one circle and $q
-k$ dots on the other circle and cover these with a pairing (dots 2
and 6 covered with the shading above). All remaining points are
connected with through strings. In this example there are only two ways
to make the connections: 1 is connected to either 9 or 11; this choice
forces all the others.}
\end{center}
\end{figure}

Now as noted above $\kappa_\pi(x, \dots, x) = 0$ unless $\pi$ is a
pairing. Let $\pi$ be a pairing such that $\pi^{-1} \gamma_{2n}$
separates the points of $N$. Since $\pi$ is non-crossing 2 is connected to
$2 k + 1$ for some $k$. Then 2 and $2 k$ are
in the same cycle of $\pi^{-1} \gamma_{2n}$ unless $k = 1$, i.e. 2
is connected to 3. In general we must have that $2 k$ is connected
to $2 k +1$ for $1  \leq k < n$ and $2n$ is connected to 1. Thus
there is only one pairing such that $\pi^{-1} \gamma_{2n}$
separates the points of $N$. The contribution of this pairing is 1.
Hence $\kappa_n(a, \dots, a) = 1$ for all $n$.

Now let us find the second order cumulants of $a$. Fix $p$ and $q$
and let $N = \{2, 4, 6, \dots, 2p + 2 q\}$. 
\begin{eqnarray*}\lefteqn{
\kappa_{p,q}(a, \dots, a) = \kappa_{p,q}(x^2, \dots, x^2) }\\
&=&
\mathop{\sum_{\pi \in S_{NC}(2p, 2q)}}_%
{\pi^{-1}\gamma_{2p,2q}\ {\rm sep.\hbox{\tiny'} s}\ N}
\kappa_\pi(x, \dots, x) 
+\kern-1em
\mathop{\sum_{(\cV, \pi) \in \cPS(2p, 2q)'}}_%
{\pi^{-1}\gamma_{2p,2q}\ {\rm sep.\hbox{\tiny'} s}\ N}
\kappa_{(\cV, \pi)}(x, \dots, x) \\
&=&
\mathop{\sum_{\pi \in S_{NC}(2p, 2q)}}_%
{\pi^{-1}\gamma_{2p,2q}\ {\rm sep.\hbox{\tiny'} s}\ N}
\kappa_\pi(x, \dots, x) 
\end{eqnarray*}
because, as we have shown, all the second order cumulants of $x$
are 0. Moreover as noted above $\kappa_\pi(x, \dots, x) = 0$ except
for $\pi$ a pairing in which case we get 1. Thus $\kappa_{p,q}(a,
\dots, a)$ is the cardinality of $\{ \pi \in S_{NC}(2p, 2q) \mid
\pi$ is a pairing and $\pi^{-1} \gamma_{2p, 2q}$ separates the
points of $N\}$. 

\begin{proposition}
$\ds\kappa_{p,q}(a, \dots, a) =
\sum_{k \geq 1} k \binom{p}{k} \binom{q}{k}$

\end{proposition}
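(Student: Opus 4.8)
\noindent\textit{Proof plan.}
Recall from the discussion preceding the statement that $\kappa_{p,q}(a,\dots,a)$ equals the number of pairings $\pi\in S_{NC}(2p,2q)$ for which $\pi^{-1}\gamma_{2p,2q}$ separates the points of $N=\{2,4,\dots,2p+2q\}$, so it suffices to count this set. My first step is to sharpen the separation condition. Since $\pi$ is a pairing, $|\pi|=p+q$, and since $\pi\in S_{NC}(2p,2q)$ we have $|\pi|+|\pi^{-1}\gamma_{2p,2q}|=|\gamma_{2p,2q}|+2=2p+2q$, whence $\#(\pi^{-1}\gamma_{2p,2q})=(2p+2q)-(p+q)=p+q=\#N$. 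Consequently ``$\pi^{-1}\gamma_{2p,2q}$ separates $N$'' is equivalent to the stronger assertion that \emph{every} cycle of $\pi^{-1}\gamma_{2p,2q}$ contains \emph{exactly one} point of $N$.

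Next I would pass to a geometric picture. Draw $\pi$ as a non-crossing pairing on the $(2p,2q)$-annulus; by Euler's relation its complement has exactly $p+q$ faces, and these are in natural bijection with the cycles of $\pi^{-1}\gamma_{2p,2q}$ (reading off, in cyclic order, the points lying just clockwise of the circle-arcs along the boundary of a face). Grouping the $2p+2q$ points into the $p+q$ ``spokes'' $\{1,2\},\{3,4\},\dots$ that come from the factorisation $a=x^2$, the point $2\ell\in N$ is precisely the one attached to the boundary arc running from the second half of spoke $\ell$ to the first half of spoke $\ell+1$; there are exactly $p+q$ such ``inter-spoke'' arcs. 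Hence the condition becomes: \emph{each of the $p+q$ faces of $\pi$ is incident to exactly one inter-spoke arc} (and then automatically to no two, since the counts match).

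Now I would count these $\pi$ directly. Let $2k$ be the number of through strings of $\pi$; since $\pi\in S_{NC}(2p,2q)$ we have $k\ge1$, and $2k$ is even because on each circle the non-through points must pair among themselves. The through strings, being pairwise non-crossing on the annulus, all wind the same way, so they alternate cyclically with $2k$ runs of non-through points. The claim is that a valid $\pi$ is \emph{freely} determined by a choice of $k$ of the $p$ outer spokes, $k$ of the $q$ inner spokes, and one of $k$ cyclic offsets matching the outer and inner through-endpoints; everything else is forced. Indeed, on each run of non-through points the ``one inter-spoke arc per face'' requirement forces the pairing to be the rigid ``shift'' pattern pairing each even point with the next odd point --- exactly the unique pairing isolated in the first-order computation $\kappa_n(a,\dots,a)=1$ --- while non-crossingness forces the through strings into the unique parallel configuration compatible with the chosen spokes, up to the $k$ rotational choices. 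Summing over $k\ge1$ then gives $\sum_{k\ge1}k\binom pk\binom qk$, the factor $k$ being the number of offsets (equivalently, the choice of a distinguished one among the $k$ selected inner spokes).

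The main obstacle is the structure theorem invoked in that last step: one must show that the ``exactly one point of $N$ per cycle'' condition genuinely forces both the rigidity of the non-through runs and the parallelism of the through strings, and then verify that the number of admissible choices is precisely $k\binom pk\binom qk$ --- ruling out faces incident to zero or to two inter-spoke arcs, and correctly bookkeeping the several ways a through bundle can wind around the hole (which is exactly why the annular count exceeds the first-order value $1$). I would carry this out with the restriction/intertwining technique of Section~2: cut the annulus along one through string to reduce to a non-crossing pairing of a disk, apply the first-order separation criterion (Theorem~\ref{separates} and Lemma~\ref{first-sep}) circle by circle to pin down the non-through strings, and then account separately for the remaining cyclic freedom of the through bundle, in the spirit of Lemmas~\ref{first-order-uniqueness} and \ref{second-order-uniqueness}.
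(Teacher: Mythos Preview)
Your proposal is correct and follows the same route as the paper: stratify by the number $2k$ of through strings, argue that the separation condition forces every non-through pair to be of the form $(2t,2t+1)$, obtain $\binom{p}{k}\binom{q}{k}$ from placing these forced pairs on each circle, and pick up the factor $k$ from the admissible cyclic matchings of the $2k$ through strings (the paper phrases this as ``in principle there are $2k$ ways \dots\ but half of these put two elements of $N$ in the same cycle''). Your preliminary sharpening that $\#(\pi^{-1}\gamma_{2p,2q})=p+q=\#N$, so that separation means exactly one $N$-point per cycle, is a helpful observation the paper leaves implicit, and the paper dispatches the rigidity of the non-through runs with a single sentence (``by the same argument as in the disc case'') rather than your proposed cut-and-reduce argument, but the strategy is the same.
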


\begin{proof}
We shall show that the number of non-crossing pairings $\pi$ of a $(2p,
2q)$-annulus with $k$ pairs of through strings such that $\pi^{-1}\gamma_{2p,
2q}$ separates the points of $N = \{2, 4, 6, \dots, 2p + 2q\}$ is $k
\binom{p}{k}\binom{q}{k}$. Summing over $k$ establishes our claim. 

Let $\pi$ be a  non-crossing pairing $\pi$ of a $(2p, 2q)$-annulus  with $k$
pairs of through strings such that $\pi^{-1}\gamma_{2p, 2q}$ separates the
points of $N$.  By the same argument as in the disc case the only possible
pairs of $\pi$ which are not through strings are of the form $(2t, 2 t+1)$.
There are $\binom{p}{k} = \binom{p}{p-k}$ ways to place these $p-k$ pairs of
adjacent elements on $[2p]$ and $\binom{q}{k} = \binom{q}{q-k}$ ways of
placing these $q-k$ pairs of adjacent elements on $[2p+1, 2p+2q]$. In
principle there are $2k$ ways of connecting the $2k$ through strings but
half of these put two elements of $N$ in the same cycle of $\pi^{-1}
\gamma_{2p, 2q}$, so in practice there are only $k$ ways of connecting the
through strings. Thus the number of pairings is indeed $k \binom{p}{k}
\binom{q}{k}$. See Figure XX. 

\end{proof}

\begin{theorem}
\[
\sum_{p,q \geq 1} \kappa_{p,q}(a, \dots, a) z^p w^q
= \frac{z w }{(1 - z - w)^2}
=\sum_{p, q \geq 1} p \binom{p+q-1}{p} z^p w^q
\]
\end{theorem}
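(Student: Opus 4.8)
The plan is to feed the formula of the preceding Proposition, $\kappa_{p,q}(a,\dots,a)=\sum_{k\geq 1}k\binom{p}{k}\binom{q}{k}$, into a chain of standard generating-function manipulations. Since every coefficient $k\binom{p}{k}\binom{q}{k}$ is non-negative, all the rearrangements below are legitimate identities of formal power series. First I would interchange the order of summation to get
\[
\sum_{p,q\geq 1}\kappa_{p,q}(a,\dots,a)\,z^p w^q
=\sum_{k\geq 1}k\Bigl(\sum_{p\geq k}\binom{p}{k}z^p\Bigr)\Bigl(\sum_{q\geq k}\binom{q}{k}w^q\Bigr),
\]
and then insert the elementary identity $\sum_{p\geq k}\binom{p}{k}z^p=z^k/(1-z)^{k+1}$ (and the same in $w$). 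Writing $u:=zw/\bigl((1-z)(1-w)\bigr)$, the right-hand side becomes $\frac{1}{(1-z)(1-w)}\sum_{k\geq 1}k\,u^k$.

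Next I would sum $\sum_{k\geq 1}k\,u^k=u/(1-u)^2$ and simplify using
\[
1-u=\frac{(1-z)(1-w)-zw}{(1-z)(1-w)}=\frac{1-z-w}{(1-z)(1-w)}.
\]
Substituting and cancelling the common factors $(1-z)(1-w)$ collapses the expression to $zw/(1-z-w)^2$, which is the first asserted equality.

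For the second equality I would expand the closed form directly: from $1/(1-t)^2=\sum_{n\geq 0}(n+1)t^n$ with $t=z+w$,
\[
\frac{zw}{(1-z-w)^2}=\sum_{n\geq 0}(n+1)\sum_{j=0}^n\binom{n}{j}z^{j+1}w^{\,n-j+1}.
\]
Setting $p=j+1$, $q=n-j+1$ (so $n=p+q-2$), the coefficient of $z^pw^q$ is $(p+q-1)\binom{p+q-2}{p-1}$, and the factorial identity $(p+q-1)\binom{p+q-2}{p-1}=p\binom{p+q-1}{p}$ finishes the proof. (Equivalently one may first evaluate $\sum_{q\geq 1}\binom{p+q-1}{p}w^q=w/(1-w)^{p+1}$ and then sum over $p$, again reaching $zw/(1-z-w)^2$.) There is no serious obstacle here; the only thing that needs a moment's care is keeping the index shifts and the simplification of $1-u$ straight, since once those are in hand everything is routine algebra.
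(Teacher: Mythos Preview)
Your proof is correct and follows essentially the same route as the paper's own argument: both start from the preceding Proposition, interchange the order of summation, apply $\sum_{p\geq k}\binom{p}{k}z^p = z^k/(1-z)^{k+1}$, sum the resulting geometric-type series in $u=zw/((1-z)(1-w))$, and simplify via $1-u=(1-z-w)/((1-z)(1-w))$; the second equality is likewise obtained in both by expanding $zw/(1-z-w)^2$ via the binomial theorem and invoking $(p+q-1)\binom{p+q-2}{p-1}=p\binom{p+q-1}{p}$. The only differences are cosmetic (your substitution $u$ and the index shift $n=p+q-2$ versus the paper's $k$-indexing), so nothing further is needed.
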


\begin{proof}
\begin{eqnarray*}\lefteqn{
\sum_{p,q \geq 1}\kappa_{p,q}(a, \dots, a) z^p w^q
=
\sum_{p, q \geq 1} \sum_{k \geq 1}
k \binom{p}{k} \binom{q}{k} z^p w^q }\\
&=&
\sum_{k \geq 1} k \sum_{p\geq 1} \binom{p}{k} z^p
\sum_{q \geq 1} \binom{q}{k} w^q
=
\sum_{k \geq 1}k \frac{z^k}{(1 - z)^{k+1}} \frac{w^k}{(1 - w)^{k+1}}\\
&=&
\frac{z w}{[(1 -z)(1-w)]^2} \sum_{k \geq 1} k \bigg( \frac{zw}{(1-z)(1-w)}
\bigg)^{k-1} \\
&=&
\frac{z w}{[(1 -z)(1-w)]^2}
\frac{1}{( 1 - \frac{zw}{(1-z)(1-w)})^2} 
= \frac{z w}{(1 - z - w)^2}
\end{eqnarray*}
Continuing we have
\begin{eqnarray*}
\frac{z w}{(1 - z - w)^2} &=& \sum_{k \geq 1} k (z + w)^{k-1} z w 
=
\sum_{k \geq 1} \sum_{p=0}^{k-1} k \binom{k-1}{p} z^{p+1} w^{k -p} \\
&=&
\sum_{k, p \geq 1} k \binom{k-1}{p-1} z^p w^{k+1 -p}\\
&=&
\sum_{p, q \geq 1} (p+q-1) \binom{p+q-2}{p-1} z^p w^q 
\ \left\{\vcenter{\hsize 60pt\noindent
\textit{letting}
$q = k+1-p$}\right. \\
&=&
\sum_{p,q \geq 1} p \binom{p+q-1}{p} z^p w^q
\end{eqnarray*}
\end{proof}

Let us next outline a method for establishing the same result that uses
the second order $R$-transform in Equation (\ref{functional}) or the
equivalent formulation given in \cite[Equation (52)]{cmss}. Let $C(z, w)
= 1 + z w R(z, w)$, and $M(z) = \sum_{k\geq 0} \phi(a^k)z ^k$ be the
moment generating function of $a$, and $M(z, w) = \sum_{p,q \geq 1}
\phi_2(a^p, a^q) z^p w^q$ be the fluctuation moment generating function
of $a$. Then Equation (\ref{functional}) becomes 

\begin{eqnarray}\label{functional2}
C(z M(z), w M(w) ) 
&=& \bigg( M( z, w) + \frac{z w}{(z - w)^2} \bigg)
\frac{M(z)}{ \frac{d}{dz}(z M(z)) }
\frac{M(w)}{ \frac{d}{dw}(w M(w)) }\notag\\
&&\mbox{} - 
\frac{ z M(z) w M(w)}{(z M(z) - w M(w))^2}
\end{eqnarray}

We must find the fluctuation moment
generating function $F(z, w)$ of  the semi-circular
operator. Let 
\[
F(z) = \sum_{k\geq0} \phi(x^k) = \frac{1 - \sqrt{1 - 4 z^2}}{2 z^2}
\eqno \mbox{and}
\]
\[
F(z, w) = \sum_{p, q \geq 1} \phi_2(x^p, x^q) z^p w^q =
\frac{z \frac{d}{dz}(z F(z)) w \frac{d}{dz}(w F(w))}%
{(1 - z F(z) w F(w))^2}
\]
\[
= \frac{ z w (z F(z) - wF(w))^2}{(z-w)^2 
(1 - z^2 F(z)^2)(1 - w^2 F(w)^2)}
\]
Where we obtain the second last expression for $F(z, w)$ by using the first
expression for $\alpha_{p,q}$ in
Equation (\ref{semi-circular-fluctuations}), 
then Lambert's identity \cite[equation 5.21]{gkp}, and finally summing
over $k$, and the last expression is obtained from the quadratic
equation satisfied by $F$.  

Now
\[ M(z^2) = F(z) \ \mbox{and} \ 
M(z^2, w^2) = \frac{1}{2}( F(z, w) + F(-z, w) )
\]
Thus
\[
M(z, w) =
\frac{2 z w M(z)^3 M(w)^3}{(2 - M(z))(2 - M(w)) 
(1 - z M(z)^2 w M(w)^2 )^2 }
\]
If we make the substitution $u = z M(z)$ and $v = w M(w)$ then we have
\[
M(z) = \frac{1}{1 -u},\ z = u - u^2,\  
\frac{d}{dz}(z M(z)) = \frac{1}{1 - 2 u},\ \mbox{and}\
\]
\[
M(z, w) = \frac{2 u v (1 - u)(1 - v)}{(1 - 2 u)(1 - 2 v)(1 - u -v)^2}
\]
Thus
\[ M(z, w) + \frac{z w}{(z - w)^2} =
\frac{u v (1 - u)(1 - v)(1 - 2 u - 2 v + 2 u^2 + 2 v^2)}%
{(1 - 2 u)(1 - 2 v)(u - v)^2(1 - u - v)^2}
\]
After some routine manipulations it follows from
(\ref{functional2}) and  the equation above that
\[
C(u, v) = \frac{u v}{ (1 - u - v)^2}
\]

\section{Second Example -- a Haar Unitary}

Let $(\cA, \phi)$ be a non-commutative probability space and $u \in \cA$
a unitary. Recall (see e.g. \cite[Lecture 10]{ns}) that $u$ is a Haar
unitary if $\phi(u^k) = 0$ for $k \not = 0$. In \cite{ns} it is shown
that for $\epsilon_1, \epsilon_2, \dots , \epsilon_n \in  \{-1, 1\}$ then
$\kappa_n(u^{\epsilon_1}, u^{\epsilon_2}, \dots, u^{\epsilon_n}) = 0$
unless $n$ is even and $\epsilon_1 + \epsilon_2 = \epsilon_2 +
\epsilon_3 = \cdots = \epsilon_{n - 1} + \epsilon_n = 0$; i.e. all
the free cumulants of $\{u, u^\ast \}$ are 0 except for the
alternating ones: $\kappa_n(u, u^\ast, \dots, u,
u^\ast)=\kappa_n(u^\ast, u,
\dots, u^\ast, u)$ which equal
$\mu(0_n, 1_n) =(-1)^{n-1} c_{n-1}$ where $\mu$ is the M\"obius function
of the lattice $NC(n)$ and $c_n$ is the $n^{th}$ Catalan number. We wish
here to indicate the corresponding result for the second order
cumulants of $u$ and $u^\ast$. The proofs will be given in a another
paper.

We first have to decide how to define the fluctuation moments of
a Haar unitary. In \cite[Theorem 2]{ds}, Diaconis and Shahshahani
showed that if $U_N$ is an $N \times N$ Haar distributed random
unitary then  \[\E( \Tr(U_N^k) \Tr(U_N^{-k})) = |k| \mbox{\ for\ }
N \geq 2\] 
We shall use these fluctuation moments to define our
second order Haar unitary. 

\begin{definition}
Let $(\cA, \phi, \phi_2)$ be a second order probability space 
and $u \in \cA$ a unitary. We say that $u$ is a second order Haar
unitary if $\phi(u^k) = 0$ for $k \not = 0$ (i.e. $u$ is a Haar
unitary) and for all integers $k$, $\phi_2(u^k, u^{l}) =
\delta_{k,-l} |k|$. 
\end{definition}

\begin{proposition}
Let $p$ and $q$ be positive integers and $u$ a second order
Haar unitary. Let $\epsilon_1, \epsilon_2, \epsilon_3, \dots ,
\epsilon_{p+q} \in \{-1, 1\}$. Then $\kappa_{p,q}(u^{\epsilon_1},
u^{\epsilon_2}, \dots,\ab u^{\epsilon_{n-1}}, u^{\epsilon_n}) =0$
unless $p$ and $q$ are even and 
\begin{equation}\label{p-part}
\epsilon_1 + \epsilon_2 = \cdots =
\epsilon_{p-1} + \epsilon_p =
\epsilon_{p+1} + \epsilon_{p+2} = \cdots =
\epsilon_{p+q-1} + \epsilon_{p+q} = 0
\end{equation}
i.e. the $\epsilon$'s alternate in sign except possibly between $p$
and $p+1$. 
\end{proposition}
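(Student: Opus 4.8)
The plan is a strong induction on $p+q$, built on the combinatorial second order moment--cumulant relation \eqref{second-moment-cumulant-eq2} applied with $a_i=u^{\epsilon_i}$, on the known first order cumulants of $\{u,u^*\}$ recalled above, and on one symmetry argument for the global balance. For the symmetry argument: if $|\lambda|=1$, then $\lambda u$ is again a second order Haar unitary of $(\cA,\phi,\phi_2)$, since $(\lambda u)^k=\lambda^k u^k$ gives $\phi((\lambda u)^k)=\phi(u^k)$ and $\phi_2((\lambda u)^k,(\lambda u)^l)=\lambda^{k+l}\phi_2(u^k,u^l)=\phi_2(u^k,u^l)$ (the last equality because $\phi_2$ already vanishes unless $k+l=0$). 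So $\lambda u$ and $u$ have the same $*$- and fluctuation distributions, whence $\kappa_{p,q}((\lambda u)^{\epsilon_1},\dots,(\lambda u)^{\epsilon_{p+q}})=\kappa_{p,q}(u^{\epsilon_1},\dots,u^{\epsilon_{p+q}})$; but multilinearity rewrites the left side as $\lambda^{\epsilon_1+\cdots+\epsilon_{p+q}}\kappa_{p,q}(u^{\epsilon_1},\dots,u^{\epsilon_{p+q}})$, so varying $\lambda$ forces $\kappa_{p,q}(u^{\epsilon_1},\dots,u^{\epsilon_{p+q}})=0$ unless $\epsilon_1+\cdots+\epsilon_{p+q}=0$; in particular $p+q$ is even. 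What remains is to upgrade ``globally balanced'' to alternation inside each of the two blocks (which forces $p$ and $q$ even).

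For the rest I would reformulate the goal: the asserted condition fails precisely when, reading cyclically, $\epsilon_i=\epsilon_{i+1}$ for some neighbouring pair inside $\{1,\dots,p\}$ or inside $\{p+1,\dots,p+q\}$ (an odd block forces such a pair). Using the cyclic invariance of $\kappa_{p,q}$ in its first $p$ and last $q$ arguments, and the $p\leftrightarrow q$ symmetry from $\phi_2(a,b)=\phi_2(b,a)$, we may assume $\epsilon_1=\epsilon_2$ and must prove $\kappa_{p,q}(u^{\epsilon_1},\dots,u^{\epsilon_{p+q}})=0$. Apply \eqref{second-moment-cumulant-eq2}: its left side is $\phi_2(u^{\epsilon_1}\cdots u^{\epsilon_p},u^{\epsilon_{p+1}}\cdots u^{\epsilon_{p+q}})=\phi_2(u^s,u^t)=\delta_{s,-t}\,|s|$ with $s=\epsilon_1+\cdots+\epsilon_p$, $t=\epsilon_{p+1}+\cdots+\epsilon_{p+q}$, and on the right side the single term $(\cV,\pi)=(1_{p+q},\gamma_{p,q})$ equals $\kappa_{p,q}(u^{\epsilon_1},\dots,u^{\epsilon_{p+q}})$, so (all cumulants below evaluated at $(u^{\epsilon_1},\dots,u^{\epsilon_{p+q}})$)
\begin{multline*}
\kappa_{p,q}(u^{\epsilon_1},\dots,u^{\epsilon_{p+q}})
=\delta_{s,-t}\,|s|-\sum_{\pi\in S_{NC}(p,q)}\kappa_\pi\\
-\sum_{\substack{(\cV,\pi)\in\cPS(p,q)'\\(\cV,\pi)\neq(1_{p+q},\gamma_{p,q})}}\kappa_{(\cV,\pi)}.
\end{multline*}
Each factor on the right is of strictly smaller order: a first order factor $\kappa_{|V|}(u^{\dots})$ is given by the first order Haar formula (zero unless its cycle is even with alternating signs), and an exceptional second order factor $\kappa_{s',t'}$ has $s'+t'<p+q$ and is known by the induction hypothesis. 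Thus the proposition reduces to the purely combinatorial claim that, under $\epsilon_1=\epsilon_2$, these surviving lower order contributions sum to exactly $\delta_{s,-t}\,|s|$. The base case $p=q=1$ is immediate from the formula $\kappa_{1,1}(a,b)=\phi_2(a,b)+\phi(a)\phi(b)-\phi(ab)$, which gives $\kappa_{1,1}(u^{\epsilon_1},u^{\epsilon_2})=0$ for any signs.

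The hard part will be precisely that last identity. It is the annular refinement of the known disc statement that $\kappa_n(u^{\epsilon_1},\dots,u^{\epsilon_n})=0$ off alternating words (equivalently, the vanishing of $\sum\{\mu(\pi,1_n):\pi\in NC(n),\ \text{every block of }\pi\ \epsilon\text{-balanced}\}$ for non-alternating $\epsilon$); the genuinely new features are the through-cycles of the $\pi\in S_{NC}(p,q)$, the exceptional block joining one cycle on the outer circle to one on the inner circle, and the weight $|s'|=|s|$ carried in from $\phi_2(u^{s'},u^{-s'})$. I would attack it by the localization method of Lemma~\ref{first-sep}, Proposition~\ref{ksa} and the proof of Theorem~\ref{main}: restrict $\pi$ and $\pi^{-1}\gamma_{p,q}$ to the support of a single cycle, and use non-crossingness on the annulus to show that the assumption $\epsilon_1=\epsilon_2$ forces, in every surviving diagram, either two equal-sign neighbours inside one cycle of $\pi$ (killing the corresponding first order factor by Haar vanishing) or an exceptional block which itself satisfies the ``$\epsilon_1=\epsilon_2$'' hypothesis (killing it by induction), leaving residual signs to be collected with the second order M\"obius function of \cite{cmss}. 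A convenient dictionary along the way is Theorem~\ref{separates} and Lemma~\ref{first-sep}, which rephrase ``alternating within a block'' as ``$\pi^{-1}\gamma_{p,q}$ separates the even positions'' --- exactly the hypothesis form appearing in Theorem~\ref{main} --- so that the bookkeeping parallels the semicircular computation of Section~4.
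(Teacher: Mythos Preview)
First, a point of comparison: the paper does \emph{not} actually prove this Proposition. In \S5 the authors write ``The proofs will be given in another paper,'' and both this Proposition and the subsequent Theorem are stated without argument. So there is no paper proof to match your approach against.

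On your proposal itself: the symmetry step via $u\mapsto\lambda u$ is correct and gives the global balance $\sum_i\epsilon_i=0$ cleanly, and your base case $p=q=1$ checks out. The inductive scaffolding --- isolating $\kappa_{p,q}$ in the moment--cumulant relation and using that every other summand involves strictly smaller first or second order cumulants --- is also the right framework. However, the proof is not complete: you yourself flag that ``the hard part will be precisely that last identity,'' namely that under $\epsilon_1=\epsilon_2$ the surviving lower order contributions sum to $\delta_{s,-t}|s|$, and what follows is a plan of attack (``I would attack it by the localization method\dots'') rather than an argument. The difficulty you would meet is real: when positions $1$ and $2$ lie in different cycles of $\pi$, neither the first order Haar vanishing nor the induction hypothesis kills the term directly, so a genuine cancellation across many diagrams must be organised, and nothing in your sketch pins down how the weight $|s|$ from $\phi_2$ is matched.

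A more direct route you might pursue, parallel to the first order proof in \cite{ns}, is to use $uu^\ast=1$ together with the product formula of Theorem~\ref{main} and the vanishing of second order cumulants when one argument is $1$ (which you can verify from Definition~\ref{second-moment-cumulant}, or deduce from Theorem~\ref{main} applied with one factor equal to $1\cdot 1$). Expanding $0=\kappa_{p,q}(uu^\ast,u^{\epsilon_2},\dots)$ via Theorem~\ref{main} gives a short recursion among neighbouring $\epsilon$-words, from which the alternation drops out with far less diagram-chasing than the raw moment--cumulant expansion.
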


As noted above, in the first order case the alternating cumulants of $u$
and $u^\ast$ were given by the M\"obius function $\mu$ of the lattice
$NC(n)$, i.e.
\[
\kappa_{2n}(u, u^\ast, \dots, u, u^\ast) = 
\kappa_{2n}(u^\ast, u, \dots, u^\ast, u) = \mu(0_n, 1_n)
\]

We shall state an analogous result for the second order case.
Recall from \cite[\S 5.4]{cmss} that the second order M\"obius
function is defined as the multiplicative function which is the
convolution inverse of the zeta function on the set of partitioned
permutations. It was shown \cite[Theorem 5.24]{cmss}
that $\mu(1_n, \gamma_n) = (-1)^{n-1} c_{n-1}$ and
$\mu(1_{p+q}, \gamma_{p,q}) = (-1)^{p+q} c_{p,q}$ where $c_{p,q}$ is the
cardinality of $S_{NC}(p, q)$. Moreover it was shown  \cite[page
46]{cmss} that the the M\"obius function satisfies the
following recurrence relation
\begin{eqnarray*}
0 &=&
\mu(1_{p+q}, \gamma_{p,q}) + q \mu(1_{p+q}, \gamma_{p+q}) \notag \\
&& \mbox{} + 
\sum_{1 \leq k < p}\Big(
\mu(1_{k+q}, \gamma_{k,q}) \mu(1_{p-k}, \gamma_{p-k}) +
\mu(1_k, \gamma_k) \mu(1_{p-k+q}, \gamma_{p-k,q}) \Big)
\end{eqnarray*}

The alternating cumulants of $u$ and $u^\ast$
satisfy the same recurrence relation.

\begin{theorem}
Let $p = 2m$ and $q = 2n$ be even integers and $\epsilon_1, \dots,
\epsilon_{p+q}
\in \{-1, 1\}$ satisfy (\ref{p-part}). Then
$\kappa_{p,q}(u^{\epsilon_1}, \dots, u^{\epsilon_{p+q}}) =
(-1)^{m+n} c_{m, n}$.

\end{theorem}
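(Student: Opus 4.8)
The plan is to reduce the statement to a counting problem via Theorem~\ref{main}, and then use the known first-order structure of the cumulants of a Haar unitary together with the recurrence for the second-order M\"obius function stated above. First I would write $u^{\epsilon_i}$ as a (one-letter) product and invoke Theorem~\ref{main}: since condition~(\ref{p-part}) says the $\epsilon$'s alternate within the first $p$ entries and within the last $q$ entries, and since $\kappa_{(\cV,\pi)}(u^{\epsilon_1},\dots,u^{\epsilon_{p+q}})$ vanishes unless every block of $\pi$ pairs a $u$ with a $u^\ast$, the surviving $(\cV,\pi)\in\cPS(p,q)$ are severely constrained. In fact, as in the semicircular example, the alternation forces the non-through cycles of $\pi$ to be of the ``nearest-neighbour'' type, and — crucially — because all first-order cumulants $\kappa_n(u,u^\ast,\dots,u,u^\ast)$ are the scalars $\mu(0_n,1_n)=(-1)^{n-1}c_{n-1}$ and all \emph{proper} second-order cumulants $\kappa_{r,s}$ with $(r,s)\neq(p,q)$ are (inductively) those scalars $\mu(1_{r+s},\gamma_{r,s})$ or products thereof, $\kappa_{(\cV,\pi)}$ evaluates to a product of values of the first- and second-order M\"obius functions. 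So the left-hand side of~(\ref{main-eq}) specialised to our alternating word becomes a sum, over the admissible $(\cV,\pi)$, of such M\"obius products.

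Next I would identify exactly which $(\cV,\pi)\in\cPS(p,q)$ contribute. Working with $N=\{1,2,\dots,p+q\}$ restricted appropriately (the ``separation'' set coming from the pairing structure, as in the semicircular computation where $N=\{2,4,\dots\}$), the condition that $\pi^{-1}\gamma_{p,q}$ separates the points of $N$ plus the pairing condition should pin down: (a) the through-part of $\pi$, which must be a non-crossing annular permutation whose cycles alternate $u,u^\ast$, contributing a factor $\mu(1_{a+b},\gamma_{a,b})$ or $q\,\mu(1_{p+q},\gamma_{p+q})$ depending on whether $\cV$ has an exceptional block joining an outer and an inner cycle of $\pi$ through the boundary point between $p$ and $p+1$; and (b) the disc-type pieces on each circle, each forced (by separation, exactly as in Figure~13) to collapse to a single nested pairing contributing $\mu(1_k,\gamma_k)$. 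Collecting these contributions and reindexing the position of the ``break'' $k$ on the outer circle, the sum reorganises precisely into the right-hand side of the displayed recurrence: the term $q\,\mu(1_{p+q},\gamma_{p+q})$, and the sum over $1\le k<p$ of $\mu(1_{k+q},\gamma_{k,q})\mu(1_{p-k},\gamma_{p-k})+\mu(1_k,\gamma_k)\mu(1_{p-k+q},\gamma_{p-k,q})$.

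Since the left-hand side of~(\ref{main-eq}), after moving $\kappa_{p,q}(u^{\epsilon_1},\dots,u^{\epsilon_{p+q}})$ to one side, equals (by the evaluation above) the negative of exactly those M\"obius terms, the alternating second-order cumulant satisfies the identical recurrence as $\mu(1_{p+q},\gamma_{p,q})=(-1)^{p+q}c_{p,q}$. A check of the base case $p=q=2$ (where one computes $\kappa_{2,2}(u,u^\ast,u^\ast,u)$ directly from Definition~\ref{second-moment-cumulant}, cf.\ the worked examples in \S1) then gives equality for all even $p=2m$, $q=2n$, yielding $\kappa_{p,q}(u^{\epsilon_1},\dots,u^{\epsilon_{p+q}})=\mu(1_{p+q},\gamma_{p,q})=(-1)^{p+q}c_{p,q}=(-1)^{m+n}c_{m,n}$.

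I expect the main obstacle to be step two: showing that the combinatorial sum over admissible $(\cV,\pi)\in\cPS(p,q)$ — after imposing both the alternation (pairing) constraint and the separation constraint of Theorem~\ref{main} — reorganises \emph{exactly} into the M\"obius recurrence, with the correct multiplicity $q$ on the $\gamma_{p+q}$ term and no over- or under-counting of the position of the through-strings. This is the annular analogue of the ``only $k$ of the $2k$ ways of joining through strings are valid'' phenomenon in Figure~13, and getting the bookkeeping of the exceptional block of $\cV$ (which must contain one outer and one inner cycle of $\pi$) consistent with the boundary point between positions $p$ and $p+1$ is where the care is needed. Everything else is either quoted (the first-order Haar cumulants, Theorem~\ref{main}, the M\"obius recurrence) or a routine induction.
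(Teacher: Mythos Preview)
The paper does not actually prove this theorem: at the start of \S5 it says explicitly that ``the proofs will be given in another paper.'' So there is no paper-proof to compare against; your proposal has to stand on its own.

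There is a genuine gap in your first step. Invoking Theorem~\ref{main} with each $u^{\epsilon_i}$ written as a one-letter product is vacuous: when every $n_i=1$ the set $N$ is all of $[p+q]$, the separation condition forces $\pi^{-1}\gamma_{p,q}$ to be the identity, and the only surviving term on the right of~(\ref{main-eq}) is $(\cV,\pi)=(1_{p+q},\gamma_{p,q})$ itself --- you recover the tautology $\kappa_{p,q}=\kappa_{p,q}$ and no recurrence. Your subsequent discussion (the reference to a separation set like $N=\{2,4,\dots\}$, the analogy with the semicircular computation of \S4, the through-string bookkeeping) only makes sense if Theorem~\ref{main} is being applied to a genuine product, and the natural candidate --- which you never name --- is $uu^\ast=1$. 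The parallel to \S4 is to expand $0=\kappa_{m,n}(1,\dots,1)=\kappa_{m,n}(uu^\ast,\dots,uu^\ast)$ via Theorem~\ref{main}, so that the right-hand side becomes a sum over $(\cV,\pi)\in\cPS(2m,2n)$ with $\pi^{-1}\gamma_{2m,2n}$ separating $\{2,4,\dots,2m+2n\}$. (One must first check that second-order cumulants with a $1$ in any slot vanish; this is the second-order analogue of the standard first-order fact and follows from $\phi_2(1,\cdot)=0$ together with equation~(\ref{second-moment-cumulant-eq}) by induction.)

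Even with the setup repaired, the step you yourself flag as the obstacle --- matching the constrained sum to the displayed M\"obius recurrence, with the correct multiplicity on the disc term and the correct splitting over $k$ --- is where all the content lies, and your proposal supplies no mechanism beyond the (correct) observation that alternation plus separation is restrictive. In particular you would need to explain why the recurrence, which is indexed by $(m,n)$ rather than $(2m,2n)$, emerges from the pairing structure, and why the exceptional block of $\cV$ in the $\cPS'$ part produces exactly the cross terms $\mu(1_{k+n},\gamma_{k,n})\mu(1_{m-k},\gamma_{m-k})$. As written, the proposal identifies a plausible strategy but does not execute the combinatorial core, and its opening move does not get off the ground.
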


\section{concluding remarks -- a partial order on $\cPS(p,q)$}

In \S 2 we proved two lemmas (\ref{annular-order}
and \ref{second-annular-order}) that show that if
$\pi$ is a sub-partition of the Kreweras complement
$\sigma^{-1}\gamma_{p,q}$ of $\sigma$, then $\sigma$ is a sub-partition of the
Kreweras complement $\gamma_{p,q} \pi^{-1}$ of $\pi$, provided we take
the complement on the other side. In Lemma \ref{second-annular-order} 
this order was expressed terms of multiplication of partitioned permutations.
The multiplication of partitioned permutations can be used to make $\cPS(p,q)$
into a partially ordered set. This order is an extension of the one
given in Definition \ref{order1}.

\begin{definition}
If $(\cV, \pi), (\cU, \sigma) \in \cPS(p,q)$ we say that $(\cV, \pi) \leq
(\cU, \sigma)$ if there is $\cW$ such that $(\cV, \pi) \cdot (\cW,
\pi^{-1} \sigma) = (\cU, \sigma)$. 
\end{definition}

\begin{proposition}\label{order-structure}
Let $(\cV, \pi), (\cU, \sigma) \in \cPS(p,q)$ and suppose there is $\cW$ such
that $(\cV, \pi) \cdot (\cW, \pi^{-1} \sigma) = (\cU, \sigma)$. Then 
\begin{enumerate}
\item $\cW = 0_{\pi^{-1} \sigma}$ and
\item $\cU = \cV \vee \pi^{-1} \sigma = \cV \vee \pi \vee \sigma = \cV \vee
\sigma\pi^{-1}$;
\item $(0_{\sigma\pi^{-1}}, \sigma \pi^{-1}) \cdot (\cV, \pi) = (\cU, \sigma)$.
\end{enumerate}
\end{proposition}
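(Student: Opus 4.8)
The plan is to run everything through the ``second order length'' $|(\cV,\pi)|=2|\cV|-|\pi|$ and the partial product of partitioned permutations, using that each element of $\cPS(p,q)$ lies on a geodesic from the all--singletons partitioned permutation to $(1_{p+q},\gamma_{p,q})$. The first step I would carry out is to record, for every $(\cX,\tau)\in\cPS(p,q)$, the identity
\[
(\cX,\tau)\cdot(0_{\tau^{-1}\gamma_{p,q}},\,\tau^{-1}\gamma_{p,q})=(1_{p+q},\gamma_{p,q}),
\]
equivalently $|(\cX,\tau)|+|\tau^{-1}\gamma_{p,q}|=p+q$. For $\tau\in S_{NC}(p,q)$ (so $\cX=0_\tau$) this is exactly the remark made just before Lemma \ref{second-annular-order}. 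For $(\cX,\tau)\in\cPS(p,q)'$ one writes $\tau=\tau_1\times\tau_2\in NC(p)\times NC(q)$; by definition $\cX$ has a single block merging one cycle of $\tau_1$ with one cycle of $\tau_2$ and all its other blocks are single cycles of $\tau$, so $|\cX|=|\tau|+1$ and $|(\cX,\tau)|=|\tau|+2$. Since $\tau^{-1}\gamma_{p,q}=(\tau_1^{-1}\gamma_p)\times(\tau_2^{-1}\gamma_q)$, while $\tau_1\le\gamma_p$ and $\tau_2\le\gamma_q$ give $|\tau_1|+|\tau_1^{-1}\gamma_p|=p-1$ and $|\tau_2|+|\tau_2^{-1}\gamma_q|=q-1$, the length identity follows; and $\cX\vee 0_{\tau^{-1}\gamma_{p,q}}=1_{p+q}$ because on each circle the Kreweras pair already spans that circle and the exceptional block of $\cX$ joins the two circles.

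Now set $\rho=\pi^{-1}\sigma$, so $\sigma=\pi\rho$, and note that the hypothesis $(\cV,\pi)\cdot(\cW,\rho)=(\cU,\sigma)$ unpacks as $\cU=\cV\vee\cW$ together with the length equation $|(\cV,\pi)|+|(\cW,\rho)|=|(\cU,\sigma)|$; implicit in it is that $(\cW,\rho)$ is a partitioned permutation, i.e. $0_\rho\le\cW$. To prove \textit{(i)}, apply the identity of the first step to both $(\cV,\pi)$ and $(\cU,\sigma)$ to get
\[
|(\cW,\rho)|=|(\cU,\sigma)|-|(\cV,\pi)|=|\pi^{-1}\gamma_{p,q}|-|\sigma^{-1}\gamma_{p,q}|.
\]
Since $\sigma^{-1}\gamma_{p,q}=\rho^{-1}(\pi^{-1}\gamma_{p,q})$, the triangle inequality for the metric on $S_{p+q}$ gives $|\sigma^{-1}\gamma_{p,q}|\ge|\pi^{-1}\gamma_{p,q}|-|\rho|$, hence $|(\cW,\rho)|\le|\rho|$. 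On the other hand $0_\rho\le\cW$ forces $\#(\cW)\le\#(\rho)$, so $|\cW|\ge|\rho|$ and $|(\cW,\rho)|=2|\cW|-|\rho|\ge|\rho|$. Thus $|(\cW,\rho)|=|\rho|$, so $|\cW|=|\rho|$, and combined with $0_\rho\le\cW$ this gives $\cW=0_{\pi^{-1}\sigma}$.

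For \textit{(ii)}, part \textit{(i)} already gives $\cU=\cV\vee 0_{\pi^{-1}\sigma}$, the first equality, and the remaining equalities are a short computation in the partition lattice. Using $0_\pi\le\cV$ one has $\cV\vee 0_\pi\vee 0_\sigma=\cV\vee 0_\sigma$; the inclusions $0_{\pi\rho}\le 0_\pi\vee 0_\rho$ and $0_\rho=0_{\pi^{-1}\sigma}\le 0_\pi\vee 0_\sigma$ give $\cV\vee 0_{\pi^{-1}\sigma}=\cV\vee 0_\sigma$; and since $\sigma\pi^{-1}=\pi(\pi^{-1}\sigma)\pi^{-1}$ is conjugate to $\rho$, the inclusions $0_{\sigma\pi^{-1}}\le 0_\sigma\vee 0_\pi$ and $0_\sigma=0_{(\sigma\pi^{-1})\pi}\le 0_{\sigma\pi^{-1}}\vee 0_\pi$ give $\cV\vee 0_{\sigma\pi^{-1}}=\cV\vee 0_\sigma$ as well. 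For \textit{(iii)}, the product $(0_{\sigma\pi^{-1}},\sigma\pi^{-1})\cdot(\cV,\pi)$ has permutation part $(\sigma\pi^{-1})\pi=\sigma$ and partition part $0_{\sigma\pi^{-1}}\vee\cV=\cU$ by \textit{(ii)}, and its length equation coincides with that of the given product because $|(0_{\sigma\pi^{-1}},\sigma\pi^{-1})|=|\sigma\pi^{-1}|=|\pi^{-1}\sigma|=|\rho|=|(\cW,\rho)|$; so the product is defined and equals $(\cU,\sigma)$.

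The only step I expect to require genuine care is the first one, namely checking the geodesic identity in the exceptional case $(\cX,\tau)\in\cPS(p,q)'$, since that is the single place where the precise combinatorial description of the partitions in $\cPS(p,q)$ is used. Everything after that is routine manipulation of the metric on $S_{p+q}$ (the triangle inequality and its reverse form) and of joins in the partition lattice; if the ad hoc inequalities in the proof of \textit{(i)} are to be avoided, one can instead derive the same conclusions from \cite[Proposition 4.10]{cmss}, the partitioned--permutation version of the geodesic--transitivity Lemma \ref{transitive}.
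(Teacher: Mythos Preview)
Your argument is correct and is genuinely different from the paper's own proof. The paper proceeds by case analysis on whether $\sigma\in S_{NC}(p,q)$ or $\sigma\in NC(p)\times NC(q)$, and in the latter case on whether $\pi\in NC(p)\times NC(q)$ or $\pi\in S_{NC}(p,q)$; in the last sub-case (disc $\leq$ tunnel) the bare triangle inequality $|\sigma|\le|\pi|+|\pi^{-1}\sigma|$ is too weak by~$2$, so the paper decomposes everything along the blocks of $\cU$ and invokes \cite[Proposition~5.11]{cmss} on the exceptional block. Your approach replaces all of this by first establishing the single identity $|(\cX,\tau)|+|\tau^{-1}\gamma_{p,q}|=p+q$ uniformly for all $(\cX,\tau)\in\cPS(p,q)$, and then applying the triangle inequality not to $\sigma=\pi\rho$ but to $\sigma^{-1}\gamma_{p,q}=\rho^{-1}(\pi^{-1}\gamma_{p,q})$; this is exactly what absorbs the ``$+2$'' automatically and removes the need for any case distinction or for the external reference. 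The verification of the geodesic identity in the $\cPS(p,q)'$ case is straightforward, and your handling of (\textit{ii}) via the orbit identities $0_\pi\vee 0_\sigma=0_\pi\vee 0_{\pi^{-1}\sigma}=0_\pi\vee 0_{\sigma\pi^{-1}}$ combined with $0_\pi\leq\cV$ is essentially the same as in the paper. What you gain is a cleaner, case-free proof of (\textit{i}); what the paper's decomposition buys is the additional structural information (spelled out in the subsequent Corollary) about how the cycles of $\pi$ sit inside those of $\sigma$, which your length argument does not directly exhibit.
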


\begin{proof}
Recall that for $(\cV, \pi) \in \cPS(p,q)$ either $\pi \in S_{NC}(p, q)$ and
$\cV = 0_\pi$ or $\pi \in NC(p) \times NC(q)$ and all the blocks of $\cV$
contain just one cycle of $\pi$ except one block of $\cV$ which contains two
cycles of $\pi$ -- one from each circle; thus in this case $|\cV| = |\pi| + 1$.
Hence in the former case $|(\cV, \pi)| = |\pi|$ and in the latter case
$|(\cV, \pi) = |\pi| + 2$. 

Thus $|(\cV, \pi)| = |\pi| + \delta$ where $\delta = 0$ or 2. By assumption
$|(\cV, \pi)| + |(\cW, \pi^{-1} \sigma)| = |(\cU, \sigma)|$. If $\sigma \in
S_{NC}(p, q)$ then $\cU = 0_\sigma$ and $|(\cU, \sigma)| = |\sigma|$. Thus
\[
|\pi| + \delta + |(\cW, \pi^{-1} \sigma)| = |\sigma| \leq |\pi| + |\pi^{-1}
\sigma|
\]
Therefore $\delta = 0$ and $|(\cW, \pi^{-1} \sigma)| = |\pi^{-1}
\sigma|$ and so $\cW = 0_{\pi^{-1} \sigma}$.

Suppose that $\sigma \in NC(p) \times NC(q)$ then $|\cU| = |\sigma|
+ 1$ and $|(\cU, \sigma)| = |\sigma| + 2$. If $\pi \in NC(p) \times
NC(q)$ then $|(\cV, \pi)| = |\pi| + 2$ and again we can use the
triangle inequality 
\[
|\pi| + 2 + |(\cW, \pi^{-1} \sigma)| = |\sigma| + 2 \leq |\pi| + 2 +
|\pi^{-1} \sigma|
\]
to conclude that $\cW = 0_{\pi^{-1} \sigma}$. When $\pi \in
S_{NC}(p,q)$ we have to appeal to \cite[Proposition 5.11]{cmss} and
for this we need a little preparation.

Let the cycles of $\sigma$ be $c_1$, $c_2$, \dots , $c_{k+1}$ and
the blocks of $\cU$ be $\{ U_1, \dots, U_k\}$ with $U_i = c_i$ as
sets for $1 \leq i < k$ and $U_k = c_k \cup c_{k+1}$ also as sets.
Then we can write $\pi = \pi_1 \, \pi_2 \cdots \pi_k$ when $\pi$ is
the product of the cycles of $\pi$ contained in $U_i$. We can also
decompose $\cV$ and $\cW$ into $\cV_1$, $\cV_2$, \dots , $\cV_k$ and
$\cW$ into $\cW_1$, $\cW_2$, \dots, $\cW_k$ along the blocks of
$\cU$. By the triangle inequality we have that 

\begin{equation}\label{triangle-inequalities}
\begin{cases}
|(U_i, c_i)| \leq |(\cV_i, \pi_i)| + |(\cW_i, \pi^{-1}_i c_i)|
 \mbox{\ \ for\ } 1 \leq i < k \mbox{\ and} & \\
|(U_k, c_k c_{k+1})| \leq |(\cV_k, \pi_k)| +
|(\cW_k, \pi_k^{-1} c_k c_{k+1})| & \\
\end{cases}
\end{equation}

Directly from the definition of $| \cdot |$ we obtain that
\[
|(\cV, \pi)| = |(\cV_1, \pi_1)| + \cdots + |(\cV_k, \pi_k)|
\]
\[
|(\cW, \sigma)| = |(\cW_1, \pi_1^{-1} c_1)| + 
\cdots + |(\cW_k, \pi_k^{-1} c_k c_{k+1})|
\]
and
\[
|(\cU, \sigma)| = |(\cU_1, c_1)| + \cdots + |(\cU_k, c_k c_{k+1})|
\]
Hence
\begin{eqnarray*}
|(\cV_1, \pi_1)| &+& \cdots + |(\cV_k, \pi_k)| +
|(\cW_1, \pi_1^{-1} c_1)| + 
\cdots \\
&& \mbox{} + |(\cW_k, \pi_k^{-1} c_k c_{k+1})| =
|(\cU_1, c_1)| + \cdots + |(\cU_k, c_k c_{k+1})|
\end{eqnarray*}
If there were a strict inequality in any of the inequalities
(\ref{triangle-inequalities}) then we would have strict inequality
in the equality above. Since we don't, all the inequalities
in (\ref{triangle-inequalities}) must be equalities. Hence
\[
|(U_i, c_i)| = |(\cV_i, \pi_i)| + |(\cW_i, \pi^{-1}_i c_i)|
 \mbox{\ \ for\ } 1 \leq i < k \mbox{\ and} 
\]
\[
|(U_k, c_k c_{k+1})| = |(\cV_k, \pi_k)| +
|(\cW_k, \pi_k^{-1} c_k c_{k+1})| 
\]
Thus for each $i < k$, $\cW_i = 0_{\pi^{-1}_i c_i}$ and $\pi_i$ is
a non-crossing partition of $c_i$. The last equality shows that
$(\cV_k, \pi_k) \cdot (\cW_k, \pi^{-1}_k c_k c_{k+1}) = (\cU_k, c_k
c_{k+1})$. By \cite[Proposition 5.11]{cmss} we have that $\cW_k =
0_{\pi^{-1}_k c_k c_{k+1}}$ and thus $\cW = 0_{\pi^{-1}
\sigma}$. Moreover $(\cV_k, \pi_k)$ is in $\cPS(c_k, c_{k+1})$.
This proves (\textit{i}).

Since $\pi$ and $\sigma$ are permutations the orbits of the
subgroups generated by $\{ \pi, \sigma \}$, $\{\pi, \pi^{-1} \sigma
\}$, and $\{ \pi, \sigma\pi^{-1} \}$ are all the same. Thus as
partitions $\pi \vee \sigma = \pi \vee \pi^{-1} \sigma = \pi \vee
\sigma \pi^{-1}$. Since $\pi \leq \cV$ we have $\cU = \cV \vee
\pi^{-1} \sigma = \cV \vee \pi \vee \sigma = \cV \vee \pi \vee
\sigma \pi^{-1} = \cV \vee \sigma \pi^{-1}$. This proves
(\textit{ii}) and (\textit{iii}) follows from (\textit{ii}).
\end{proof}

\begin{remark}
Associativity of multiplication of partitioned permutations
(\cite[Proposition 4.10]{cmss}) shows that the partial order is
transitive. Indeed, suppose $(\cV, \pi), (\cU, \sigma), (\cW, \tau)
\in \cPS(p,q)$ and that $(\cV, \pi) \leq (\cU, \sigma)$
and $(\cU, \sigma) \leq (\cW, \tau)$. Then
\[
(\cV, \pi) \cdot (0_{\pi^{-1} \sigma}, \pi^{-1} \sigma)
= (\cU, \sigma)
\mbox{\ and\ }
(\cU, \sigma) \cdot (0_{\sigma^{-1} \tau}, \sigma^{-1} \tau) =
(\cW, \tau)
\]
Then
\[
(\cV, \pi) \cdot (0_{\pi^{-1} \sigma} \vee 0_{\sigma^{-1} \tau},
\pi^{-1} \tau) = (\cW, \tau)
\]
and incidentally from Proposition \ref{order-structure} that as
partitions $\pi^{-1} \sigma\vee \sigma^{-1} \tau = \pi^{-1}
\tau$
\end{remark}

\begin{corollary}
Let $(\cV, \pi), (\cU, \sigma) \in \cPS(p,q)$. Then
$(\cV,  \pi)\leq (\cU, \sigma)$ if and only if
\begin{enumerate}
\item $\cV \leq \cU$;

\item if $\sigma \in S_{NC}(p,q)$ then $\pi \in S_{NC}(p,q)$ and
every cycle of $\pi$ is contained in a cycle of $\sigma$ and for each
cycle of $\sigma$ the enclosed cycles of $\pi$ form a non-crossing
permutation of this cycle of $\sigma$;

\item if $\sigma \in NC(p) \times NC(q)$ then every cycle of $\pi$
is contained in either a cycle of $\sigma$ or the union of the two
cycles of $\sigma$ connected by $\cU$; and for every cycle of
$\sigma$ or the union of the two cycles joined by $\cU$ the
enclosed cycles of $\pi$ form a non-crossing permutation of this
cycle or union of two cycles.
\end{enumerate}
\end{corollary}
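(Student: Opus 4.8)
The plan is to read off the characterization from Proposition \ref{order-structure}. By that proposition, $(\cV,\pi)\leq(\cU,\sigma)$ is equivalent to $(\cV,\pi)\cdot(0_{\pi^{-1}\sigma},\pi^{-1}\sigma)=(\cU,\sigma)$, and unwinding the definition of the product of partitioned permutations this says precisely that $\cV\vee 0_{\pi^{-1}\sigma}=\cU$ (whence $\cV\leq\cU$, so (\textit{i}) is automatic) together with the geodesic identity $|(\cV,\pi)|+|\pi^{-1}\sigma|=|(\cU,\sigma)|$. I would record at the start the elementary facts used repeatedly in the proof of Proposition \ref{order-structure}: that $|(\cV,\pi)|=|\pi|$ with $\cV=0_\pi$ when $\pi\in S_{NC}(p,q)$, that $|(\cV,\pi)|=|\pi|+2$ when $\pi\in NC(p)\times NC(q)$, and that $|(0_\tau,\tau)|=|\tau|$ for any $\tau$.

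First, the case $\sigma\in S_{NC}(p,q)$, where $\cU=0_\sigma$ and $|(\cU,\sigma)|=|\sigma|$. Chaining the triangle inequalities $|\sigma|\leq|\pi|+|\pi^{-1}\sigma|\leq|(\cV,\pi)|+|\pi^{-1}\sigma|=|\sigma|$ forces every step to be an equality, so $|(\cV,\pi)|=|\pi|$ (hence $\pi\in S_{NC}(p,q)$ and $\cV=0_\pi$) and $|\pi|+|\pi^{-1}\sigma|=|\sigma|$. By Lemma \ref{metric-order}, read through Definition \ref{order1} and the remark translating that definition into the metric, this last identity is exactly condition (\textit{ii}); and given (\textit{ii}) the remaining join condition $0_\pi\vee 0_{\pi^{-1}\sigma}=0_\sigma$ holds cycle by cycle, since within each cycle of $\sigma$ the enclosed part of $\pi$ is a non-crossing partition, which joins with its Kreweras complement to the full cycle. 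The converse is immediate, and in this case (\textit{i}) reduces to $0_\pi\leq 0_\sigma$, which follows from (\textit{ii}).

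The substantive case is $\sigma\in NC(p)\times NC(q)$, so $(\cU,\sigma)\in\cPS(p,q)'$ and $|(\cU,\sigma)|=|\sigma|+2$. Assuming $(\cV,\pi)\leq(\cU,\sigma)$, condition (\textit{i}) is already in hand, so every cycle of $\pi$ lies in a block of $\cU$. I would then carry out the block decomposition as in the proof of Proposition \ref{order-structure}: write $\cU=\{U_1,\dots,U_{k-1},U_k\}$ with $U_i=c_i$ a single cycle of $\sigma$ for $i<k$ and $U_k=c_k\cup c_{k+1}$ the union of an outer and an inner cycle, and split $\pi=\pi_1\cdots\pi_k$ and $\cV=\cV_1\vee\cdots\vee\cV_k$ along $\cU$. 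Both the geodesic identity and the join condition split over the $U_i$; for $i<k$ they give $\pi_i$ a non-crossing partition of $c_i$ with $\cV_i=0_{\pi_i}$, and for $i=k$ they give $(\cV_k,\pi_k)\in\cPS(c_k,c_{k+1})$ with $(\cV_k,\pi_k)\cdot(0_{\pi_k^{-1}c_kc_{k+1}},\pi_k^{-1}c_kc_{k+1})=(\{U_k\},c_kc_{k+1})$. This last relation says $(\cV_k,\pi_k)$ is $\leq$ the top element of $\cPS(c_k,c_{k+1})$; spelling it out by the same length bookkeeping as above (now with $c_kc_{k+1}$ in the role of $\gamma_{p,q}$ and the $S_{NC}$ criterion recalled in the Notation section) yields the dichotomy: either $\pi_k\in S_{NC}(c_k,c_{k+1})$ with $\cV_k=0_{\pi_k}$ and the cycles of $\pi$ in $U_k$ forming a non-crossing annular permutation of $U_k$, or $\pi_k\in NC(c_k)\times NC(c_{k+1})$ with $\cV_k$ the bridging block and the cycles of $\pi$ in $U_k$ lying in $c_k$ or $c_{k+1}$ and forming non-crossing partitions there. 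Reassembled, this is exactly condition (\textit{iii}). For the converse I would start from (\textit{i}) and (\textit{iii}): (\textit{i}) identifies the pair of $\sigma$-cycles joined by $\cU$, (\textit{iii}) then hands back the block decomposition, each local factor is verified, and the product is defined and equals $(\cU,\sigma)$.

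I expect the main obstacle to be the central block $U_k$ in the second case --- verifying that ``$(\cV_k,\pi_k)\leq(\{U_k\},c_kc_{k+1})$'' unpacks to precisely the geometric alternative in (\textit{iii}), and in particular matching the $S_{NC}$-versus-$NC\times NC$ dichotomy for $\pi_k$ with whether a cycle of $\pi$ sits inside the union of the two $\sigma$-cycles joined by $\cU$ or inside a single cycle of $\sigma$. It is worth underlining in the write-up that (\textit{iii}) alone does not determine which pair of $\sigma$-cycles the bridge of $\cV$ connects, so (\textit{i}) is a genuinely separate hypothesis --- this is exactly why the join condition appears as an equality $\cV\vee 0_{\pi^{-1}\sigma}=\cU$ and not merely as $\cV\leq\cU$.
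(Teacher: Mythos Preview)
Your approach is correct and is precisely the one the paper intends: the Corollary is stated immediately after Proposition \ref{order-structure} and its proof, with no separate argument given, so the reader is meant to extract it from that proposition and the block decomposition carried out in its proof. Your write-up does exactly this --- reducing $(\cV,\pi)\leq(\cU,\sigma)$ to the product identity $(\cV,\pi)\cdot(0_{\pi^{-1}\sigma},\pi^{-1}\sigma)=(\cU,\sigma)$ via part (\textit{i}) of Proposition \ref{order-structure}, then reading off the length and join constraints, and in the tunnel case $\sigma\in NC(p)\times NC(q)$ splitting along the blocks of $\cU$ as in the paper's proof to isolate the distinguished block $U_k=c_k\cup c_{k+1}$.

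Your observation that condition (\textit{i}) is not redundant in case (\textit{iii}) --- because (\textit{iii}) constrains only the cycles of $\pi$ and says nothing about where the bridging block of $\cV$ sits when $\pi\in NC(p)\times NC(q)$ --- is a genuine point that the paper leaves implicit, and it is worth keeping in the write-up. Likewise, the verification that the join condition $\cV\vee 0_{\pi^{-1}\sigma}=\cU$ holds automatically once the metric equality and the geometric conditions are in place (cycle by cycle via the Kreweras complement in the disc blocks, and via the through cycle in the annular block) is a detail the paper omits but which your proof supplies.
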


\begin{remark}
In \cite[Notation 5.11]{cmss} a partitioned permutation $(\cV,
\pi)$ with $\cV = 0_\pi$ was called a \textit{disc permutation} and one
where $|\cV| = |\pi| + 1$ a \textit{tunnel permutation}. With our order
on $\cPS(p,q)$ we see that we can have: (\textit{i}) $\mbox{disc} \leq
\mbox{disc}$, (\textit{ii}) $\mbox{disc} \leq \mbox{tunnel}$; and
(\textit{iii}) $\mbox{tunnel} \leq \mbox{tunnel}$; but 
$\mbox{tunnel} \not\leq \mbox{disc}$.
\end{remark}

\begin{remark}
With this order, $\cPS(p,q)$ becomes a partially ordered set and
the M\"obius function of the poset $\cPS(p,q)$ has a simple relation to
the M\"obius function introduced in \cite[\S 5.4 ]{cmss} and used in \S
5. We will address this relation in a forthcoming paper.
\end{remark}


\bibliographystyle{plain}

\end{document}